\newtheorem{thm}{Theorem}[section]
\newtheorem{lem}[thm]{Lemma}
\theoremstyle{definition}
\theoremstyle{remark}
\newtheorem{rem}[thm]{Remark}
\theoremstyle{conclusion}
\theoremstyle{question}
\numberwithin{equation}{section}
\begin{document}
\title[Non-critical higher order Lane-Emden-Hardy equations]{Liouville type theorems, a priori estimates and existence of solutions for non-critical higher order Lane-Emden-Hardy equations}

\author{Wei Dai, Shaolong Peng, Guolin Qin}

\address{School of Mathematics and Systems Science, Beihang University (BUAA), Beijing 100083, P. R. China}
\email{weidai@buaa.edu.cn}

\address{School of Mathematics and Systems Science, Beihang University (BUAA), Beijing 100083, P. R. China}
\email{SLPeng@buaa.edu.cn}

\address{Institute of Applied Mathematics, Chinese Academy of Sciences, Beijing 100190, and University of Chinese Academy of Sciences, Beijing 100049, P. R. China}
\email{qinguolin18@mails.ucas.ac.cn}

\thanks{Wei Dai is supported by the NNSF of China (No. 11501021).}

\begin{abstract}
In this paper, we are concerned with the non-critical higher order Lane-Emden-Hardy equations
\begin{equation*}
  (-\Delta)^{m}u(x)=\frac{u^{p}(x)}{|x|^{a}} \,\,\,\,\,\,\,\,\,\,\,\, \text{in} \,\,\, \mathbb{R}^{n}
\end{equation*}
with $n\geq3$, $1\leq m<\frac{n}{2}$, $0\leq a<2m$, $1<p<\frac{n+2m-2a}{n-2m}$ if $0\leq a<2$, and $1<p<+\infty$ if $2\leq a<2m$. We prove Liouville theorems for nonnegative classical solutions to the above Lane-Emden-Hardy equations (Theorem \ref{Thm0}), that is, the unique nonnegative solution is $u\equiv0$. As an application, we derive a priori estimates and existence of positive solutions to non-critical higher order Lane-Emden equations in bounded domains (Theorem \ref{Thm1} and \ref{Thm2}). The results for critical order Hardy-H\'{e}non equations have been established by Chen, Dai and Qin \cite{CDQ} recently.
\end{abstract}
\maketitle {\small {\bf Keywords:} Lane-Emden-Hardy equations; Liouville theorems; Nonnegative solutions; Super poly-harmonic properties; Method of moving planes in local way; Blowing-up analysis. \\

{\bf 2010 MSC} Primary: 35B53; Secondary: 35B45, 35A01, 35J91.}

\section{Introduction}

In this paper, we first investigate the Liouville property of nonnegative solutions to the following non-critical higher order Lane-Emden-Hardy equations
\begin{equation}\label{PDE}\\\begin{cases}
(-\Delta)^{m}u(x)=\frac{u^{p}(x)}{|x|^{a}} \,\,\,\,\,\,\,\,\,\, \text{in} \,\,\, \mathbb{R}^{n}, \\
u(x)\geq0, \,\,\,\,\,\,\,\, x\in\mathbb{R}^{n},
\end{cases}\end{equation}
where $u\in C^{2m}(\mathbb{R}^{n})$ if $-\infty<a\leq0$, $u\in C^{2m}(\mathbb{R}^{n}\setminus\{0\})\cap C^{2m-2}(\mathbb{R}^{n})$ if $0<a<2m$, $n\geq3$, $1<p<\frac{n+2m-2a}{n-2m}$ if $0\leq a<2$, and $1<p<+\infty$ if $2\leq a<2m$.

For $0<\alpha\leq n$, PDEs of the form
\begin{equation}\label{GPDE}
  (-\Delta)^{\frac{\alpha}{2}}u(x)=\frac{u^{p}(x)}{|x|^{a}}
\end{equation}
are called the fractional order or higher order Hardy (Lane-Emden, H\'{e}non) equations for $a>0$ ($a=0$, $a<0$, respectively), which have many important applications in conformal geometry and Sobolev inequalities. We say equations \eqref{GPDE} have critical order if $\alpha=n$ and non-critical order if $0<\alpha<n$. Liouville type theorems for equations \eqref{GPDE} (i.e., nonexistence of nontrivial nonnegative solutions) have been quite extensively studied (see \cite{BG,CD,CDQ,CFY,CL,CL1,CLiu,DQ1,DQ,GS,Lei,Lin,MP,P,PS,WX} and the references therein). It is crucial in establishing a priori estimates and existence of positive solutions for non-variational boundary value problems of a class of elliptic equations (see \cite{BM,CDQ,CL3,CL4,GS1,PQS}).

In the special case $a=0$, equation \eqref{GPDE} becomes the well-known Lane-Emden equation, which also arises as a model in astrophysics. For $\alpha=2$ and $1<p<p_{s}:=\frac{n+2}{n-2}$ ($:=\infty$ if $n=2$), Liouville type theorem was established by Gidas and Spruck in their celebrated article \cite{GS}. Later, the proof was simplified to a large extent by Chen and Li in \cite{CL} using the Kelvin transform and the method of moving planes (see also \cite{CL1}). For $n>\alpha=4$ and $1<p<\frac{n+4}{n-4}$, Lin \cite{Lin} proved the Liouville type theorem for all the nonnegative $C^{4}(\mathbb{R}^{n})$ smooth solutions of \eqref{GPDE}. When $\alpha\in(0,n)$ is an even integer and $1<p<\frac{n+\alpha}{n-\alpha}$, Wei and Xu established Liouville type theorem for all the nonnegative $C^{\alpha}(\mathbb{R}^{n})$ smooth solutions of \eqref{GPDE} in \cite{WX}. For general $a\in\mathbb{R}$, $0<\alpha\leq n$, $0<p<\min\{\frac{n+\alpha-2a}{n-\alpha},\frac{n+\alpha-a}{n-\alpha}\}$ ($1<p<+\infty$ if $\alpha=n$), there are also lots of literatures on Liouville type theorems for general fractional order or higher order Hardy-H\'{e}non equations \eqref{GPDE}, for instance, Bidaut-V\'{e}ron and Giacomini \cite{BG}, Chen, Dai and Qin \cite{CDQ}, Chen and Fang \cite{CF}, Cheng and Liu \cite{CLiu}, Dai and Qin \cite{DQ1}, Gidas and Spruck \cite{GS}, Lei \cite{Lei}, Mitidieri and Pohozaev \cite{MP}, Phan \cite{P}, Phan and Souplet \cite{PS} and many others. For Liouville type theorems on systems of PDEs of type \eqref{GPDE} with respect to various types of solutions (e.g., stable, radial, nonnegative, sign-changing, $\cdots$), please refer to \cite{BG,DQ,FG,M,P,PQS,S,SZ} and the references therein.

For the critical nonlinearity cases $p=\frac{n+\alpha}{n-\alpha}$ with $a=0$ and $0<\alpha<n$, the quantitative and qualitative properties of solutions to fractional order or higher order conformally invariant equations \eqref{GPDE} have also been widely studied. In the special case $n>\alpha=2$, equation \eqref{GPDE} becomes the well-known Yamabe problem (for related results, please see Gidas, Ni and Nirenberg \cite{GNN1,GNN}, Caffarelli, Gidas and Spruck \cite{CGS} and the references therein). For $n>\alpha=4$, Lin \cite{Lin} classified all the positive $C^{4}$ smooth solutions of \eqref{GPDE}. In \cite{WX}, among other things, Wei and Xu proved the classification results for all the positive $C^{\alpha}$ smooth solutions of \eqref{GPDE} when $\alpha\in(0,n)$ is an even integer. For $n>\alpha=3$, Dai and Qin \cite{DQ1} classified the positive $C^{3,\epsilon}_{loc}\cap\mathcal{L}_{1}$ classical solutions of \eqref{GPDE}. In \cite{CLO}, by developing the method of moving planes in integral forms, Chen, Li and Ou classified all the positive $L^{\frac{2n}{n-\alpha}}_{loc}$ solutions to the equivalent integral equation of the PDE \eqref{GPDE} for general $\alpha\in(0,n)$, as a consequence, they obtained the classification results for positive weak solutions to PDE \eqref{GPDE}. Subsequently, Chen, Li and Li \cite{CLL} developed a direct method of moving planes for fractional Laplacians $(-\Delta)^{\frac{\alpha}{2}}$ with $0<\alpha<2$ and classified all the $C^{1,1}_{loc}\cap\mathcal{L}_{\alpha}$ positive solutions to the PDE \eqref{GPDE} directly as an application, where the function space
\begin{equation}\label{0-0}
  \mathcal{L}_{\alpha}(\mathbb{R}^{n}):=\Big\{f: \mathbb{R}^{n}\rightarrow\mathbb{R}\,\big|\,\int_{\mathbb{R}^{n}}\frac{|f(x)|}{1+|x|^{n+\alpha}}dx<\infty\Big\}.
\end{equation}
In the limiting (i.e., critical order) cases $\alpha=n$, there are also a large amount of literatures on classification results for positive solutions to the following critical order conformally invariant equations with exponential nonlinearities
\begin{equation}\label{0-3}
  (-\Delta)^{\frac{n}{2}}u=(n-1)!e^{nu},
\end{equation}
for instance, Chen and Li \cite{CL1}, Chang and Yang \cite{CY}, Chen and Zhang \cite{CZ}, Lin \cite{Lin}, Wei and Xu \cite{WX} and Zhu \cite{Zhu}. For more literatures on the quantitative and qualitative properties of solutions to fractional order or higher order conformally invariant PDE and IE problems, please refer to \cite{CD,CL1,CZ,DFHQW,DFQ,Zhu} and the references therein.

In this paper, we will establish Liouville type theorem for nonnegative classical solutions of \eqref{PDE} in the cases $1<p<\frac{n+2m-2a}{n-2m}$ if $0\leq a<2$ and $1<p<+\infty$ if $2\leq a<2m$. Lei \cite{Lei} has proved the nonexistence of positive solutions to \eqref{PDE} for $0\leq a<2$ and $1<p<\frac{n-a}{n-2m}$. One should note that, our results extend the range $p\in(1,\frac{n-a}{n-2m})$ and $0\leq a<2$ in \cite{Lei} to the full range $1<p<\frac{n+2m-2a}{n-2m}$ if $0\leq a<2$ and $1<p<+\infty$ if $2\leq a<2m$.

Our Liouville type result for \eqref{PDE} is the following theorem.
\begin{thm}\label{Thm0}
Assume $n\geq3$, $1\leq m<\frac{n}{2}$, $0\leq a<2m$, $1<p<\frac{n+2m-2a}{n-2m}$ if $0\leq a<2$, $1<p<+\infty$ if $2\leq a<2m$, and $u$ is a nonnegative solution of \eqref{PDE}. If one of the following two assumptions
\begin{equation*}
  0\leq a\leq2+2p \,\,\,\,\,\,\,\,\,\,\,\, \text{or} \,\,\,\,\,\,\,\,\,\,\,\, u(x)=o(|x|^{2}) \,\,\,\, \text{as} \,\, |x|\rightarrow+\infty
\end{equation*}
holds, then $u\equiv0$ in $\mathbb{R}^{n}$.
\end{thm}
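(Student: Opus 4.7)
The plan is to follow the three-step strategy that has become standard for higher order Hardy--H\'enon Liouville theorems: first establish a super poly-harmonic property, then convert \eqref{PDE} to an equivalent integral equation, and finally apply the method of moving planes (adapted to the Hardy weight in a local way) to obtain nonexistence.

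\textbf{Step 1 (super poly-harmonic property).} For any nonnegative classical solution $u$ of \eqref{PDE}, I would first prove
\[
  (-\Delta)^{i}u(x) \geq 0 \quad \text{on } \mathbb{R}^{n}\setminus\{0\}, \qquad i=1,\ldots,m-1.
\]
The device is to introduce the spherical averages $\bar u(r)$ and $\bar v_{i}(r):=|\partial B_{r}|^{-1}\int_{\partial B_{r}}(-\Delta)^{i}u\,d\sigma$, integrate the PDE over balls via the divergence theorem, and iterate to obtain a coupled system of ODE identities for $(\bar u,\bar v_{1},\ldots,\bar v_{m-1})$. Arguing by contradiction, if some $\bar v_{i}$ were negative at some $r_{0}$, repeated integration of the system would produce a lower growth bound on $\bar u$ that is incompatible either with the explicit hypothesis $u(x)=o(|x|^{2})$, or, in the regime $0\leq a\leq 2+2p$, with an a priori growth bound derived from integrability of $|y|^{-a}u^{p}(y)$ near infinity. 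The stated dichotomy in Theorem~\ref{Thm0} is tailored precisely to close this step.

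\textbf{Step 2 (equivalent integral equation).} Once Step 1 is available, I would prove that
\[
  u(x)=\int_{\mathbb{R}^{n}}\frac{R_{n,m}}{|x-y|^{n-2m}}\,\frac{u^{p}(y)}{|y|^{a}}\,dy, \qquad x\in\mathbb{R}^{n}\setminus\{0\},
\]
with $R_{n,m}$ the Newtonian constant. Solving the polyharmonic Dirichlet problem on $B_{R}$ expresses $u(x)$ as a Green's function integral over $B_{R}$ plus boundary corrections involving $(-\Delta)^{i}u$ for $i<m$; Step 1 makes these corrections nonnegative and monotone in $R$, and monotone convergence delivers the representation, together with the finiteness of the right-hand side, as $R\to\infty$.

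\textbf{Step 3 (nonexistence) and main obstacle.} To the integral equation I would then apply the method of moving planes in integral form, carried out in a local way around the Hardy singularity, on the Kelvin transform $\tilde u(x):=|x|^{2m-n}u(x/|x|^{2})$, which satisfies a similar integral equation on $\mathbb{R}^{n}\setminus\{0\}$ with a modified weight. In the subcritical regime $1<p<(n+2m-2a)/(n-2m)$ with $0\leq a<2$, the Hardy--Littlewood--Sobolev type estimates used at the start of the procedure have a strict sign that cannot be reversed at any finite hyperplane, and the procedure forces $u\equiv 0$; when $2\leq a<2m$, the stronger Hardy singularity already makes the integral $\int|y|^{-a}u^{p}(y)\,dy$ divergent at $y=0$ unless $u\equiv 0$, which handles all $p>1$. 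The delicate step is Step 1: tracking the weight $|x|^{-a}$ cleanly both near the origin (where $(-\Delta)^{m-1}u$ can be unbounded when $a>0$) and at infinity, and linking the outcome precisely to the stated range of $(a,p)$, is the main technical hurdle of the proof.
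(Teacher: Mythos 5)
Your Steps 1 and 2 match the paper's strategy (super poly-harmonic property via spherical averages and re-centering, then the integral representation via Green's functions on balls and monotone convergence, with the dichotomy $0\leq a\leq 2+2p$ or $u=o(|x|^{2})$ entering exactly where you say, in closing the super poly-harmonic argument for the lower-order iterates). The genuine gap is in your Step 3, in both sub-cases. For $0\leq a<2$ and $1<p<\frac{n+2m-2a}{n-2m}$, the method of moving planes on the Kelvin transform $\bar u(x)=|x|^{2m-n}u(x/|x|^{2})$ does \emph{not} by itself force $u\equiv0$: because the transformed weight $|y|^{-\tau}$ with $\tau=n+2m-a-p(n-2m)>0$ is radially decreasing, the plane can be moved from $x_{1}=-\infty$ exactly up to the limiting position $\lambda_{0}=0$ (if it stopped at some $\lambda_{0}<0$ one gets a contradiction from the strict monotonicity of the weight), and by rotation invariance this yields only that $u$ is radially symmetric and monotone decreasing about the origin. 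Your claim that the Hardy--Littlewood--Sobolev estimates ``cannot be reversed at any finite hyperplane'' and that ``the procedure forces $u\equiv0$'' is unsubstantiated; an additional, independent argument is required. In the paper this is a Pohozaev-type identity derived by differentiating the scaled integral equation $u(\mu x)$ at $\mu=1$, combined with the decay estimate $u(r)\leq C r^{-\frac{2m-a}{p-1}}$ (itself extracted from the integral equation for radial solutions) to justify the boundary terms; the identity produces $\bigl(\frac{2m-n}{2}+\frac{n-a}{p+1}\bigr)\int_{\mathbb{R}^{n}}\frac{u^{p+1}}{|x|^{a}}\,dx=0$, and it is precisely the subcritical condition $p<\frac{n+2m-2a}{n-2m}$ that makes the coefficient positive and forces $u\equiv0$. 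Without this (or an equivalent) step your argument never uses the subcriticality of $p$ in the nonexistence part, which is a sign something is missing.

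Your treatment of the case $2\leq a<2m$ is also incorrect as stated: since $a<2m<n$, the weight $|y|^{-a}$ is locally integrable and the integral $\int_{\mathbb{R}^{n}}|x-y|^{2m-n}|y|^{-a}u^{p}(y)\,dy$ does \emph{not} diverge near $y=0$ for a bounded nontrivial $u$, so no contradiction arises from divergence. The paper's contradiction in this regime is a regularity one: the representation formula forces $u$ to have, near the origin, the limited smoothness dictated by the kernel convolved with $|y|^{-a}$ (an expansion containing a term of order $|x|^{2m-a}$, with $2m-a\leq 2m-2$), which is incompatible with the hypothesis $u\in C^{2m-2}(\mathbb{R}^{n})$ unless $u\equiv0$. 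So the correct mechanism there is a clash with the assumed smoothness at $x=0$ (this is also why the smoothness assumption at the origin is essential, cf.\ the singular solutions $C|x|^{-\frac{2m-a}{p-1}}$), not divergence of the integral.
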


\begin{rem}\label{remark1}
In \cite{CLiu}, Cheng and Liu proved Liouville type theorem for \eqref{PDE} in the cases $a<0$ and $1<p<\frac{n+2m-a}{n-2m}$ (there is actually an extra assumption $p>\frac{n}{n-2m}$ in \cite{CLiu}, but it is clear from their proof that the assumption $p>\frac{n}{n-2m}$ is redundant and unnecessary). Among other things, Lei \cite{Lei} established the nonexistence of positive solutions to \eqref{PDE} for $0\leq a<2$ and $1<p<\frac{n-a}{n-2m}$. However, we found a few technical mistakes in their proof, more precisely, in their proof of super poly-harmonic properties (see Theorem 2 in \cite{CLiu} and Theorem 2.1 in \cite{Lei}). For instance, the possibility that constant $C_{\ast}=0$ have to be ruled out in the proof of Theorem 2 in \cite{CLiu}, and a factor $R^{-a}$ should be added to the last inequality in the proof of Theorem 2.1 in \cite{Lei} since $R$ is sufficiently large (thus the assumption $a<2$ is needed therein). In this paper, we will prove the super poly-harmonic properties in Theorem \ref{lemma0} via a unified approach for both $a<0$ and $a\geq0$, as a consequence, we repair the proof in \cite{CLiu} and extend the results in \cite{Lei}.
\end{rem}

\begin{rem}\label{remark3}
For $0<a<2m$, if we consider the nonnegative solutions $u\in C^{2m}(\mathbb{R}^{n}\setminus\{0\})\cap C(\mathbb{R}^{n})$, then it is clear from our proof of Theorem \ref{Thm0} that Liouville theorem as Theorem \ref{Thm0} also holds for $1<p<\frac{n+2m-2a}{n-2m}$ (see Section 2). The main difference is, instead of Theorem \ref{lemma0}, we will show super poly-harmonic properties except the origin $0\in\mathbb{R}^{n}$, that is, $(-\Delta)^{i}u\geq0$ in $\mathbb{R}^{n}\setminus\{0\}$ for $i=1,\cdots,m-1$ (see remark \ref{remark4}).
\end{rem}

\begin{rem}\label{remark0}
In Theorem \ref{Thm0} and Remark \ref{remark3}, the smoothness assumption on $u$ at $x=0$ is necessary. Equation \eqref{PDE} admits a distributional solution of the form $u(x)=C|x|^{-\sigma}$ with $\sigma=\frac{2m-a}{p-1}>0$.
\end{rem}

We also consider the following higher order Navier problem
\begin{equation}\label{tNavier}\\\begin{cases}
(-\Delta)^{m}u(x)=u^{p}(x)+t \,\,\,\,\,\,\,\,\,\, \text{in} \,\,\, \Omega, \\
u(x)=-\Delta u(x)=\cdots=(-\Delta)^{m-1}u(x)=0 \,\,\,\,\,\,\,\, \text{on} \,\,\, \partial\Omega,
\end{cases}\end{equation}
where $n\geq3$, $1\leq m<\frac{n}{2}$, $\Omega\subset\mathbb{R}^{n}$ is a bounded domain with $C^{2m-2}$ boundary $\partial\Omega$ and $t\geq0$.

Theorem 6 in Chen, Fang and Li \cite{CFL} implies immediately the following a priori estimates for any positive solution $u$ to \eqref{tNavier}.
\begin{thm}\label{Thm-CFL} (\cite{CFL})
Assume $\frac{n}{n-2m}<p<\frac{n+2m}{n-2m}$. Then, for any positive solution $u\in C^{2m}(\Omega)\cap C^{2m-2}(\overline{\Omega})$ to the higher order Navier problem \eqref{tNavier}, we have
\begin{equation*}
  \|u\|_{L^{\infty}(\overline{\Omega})}\leq C(n,m,p,\Omega).
\end{equation*}
\end{thm}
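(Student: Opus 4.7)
The plan is to apply the Gidas--Spruck blow-up/rescaling argument, combining Theorem~\ref{Thm0} (with $a=0$) for interior rescalings with a companion half-space Liouville theorem for boundary rescalings; this scheme is packaged as Theorem~6 of \cite{CFL}.

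I would argue by contradiction: suppose there exists a sequence of positive classical solutions $\{u_k\}$ to \eqref{tNavier} (with parameters $t_k \geq 0$) satisfying $M_k := \|u_k\|_{L^\infty(\overline{\Omega})} = u_k(x_k) \to \infty$ for some $x_k \in \overline{\Omega}$. Setting $\lambda_k := M_k^{-(p-1)/(2m)}$ and $v_k(y) := M_k^{-1} u_k(x_k + \lambda_k y)$, a direct scaling computation yields
\[
(-\Delta)^m v_k(y) = v_k^p(y) + t_k M_k^{-p} \qquad \text{on } \Omega_k := \lambda_k^{-1}(\Omega - x_k),
\]
with $0 \leq v_k \leq 1$, $v_k(0) = 1$, Navier boundary data on $\partial \Omega_k$, and forcing $t_k M_k^{-p} \to 0$ (using $p > 1$). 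I would then invoke iterated interior Schauder/$L^q$ estimates on the cascade $(-\Delta)^j v_k$ for $j=0,\ldots,m-1$ to obtain $C^{2m}_{\mathrm{loc}}$ compactness, so that up to a subsequence $v_k$ converges to a nonnegative bounded classical limit $v$ on either $\mathbb{R}^n$ or the half-space $\mathbb{R}^n_+$, according to whether $\delta_k := \mathrm{dist}(x_k,\partial \Omega)/\lambda_k$ diverges or remains bounded.

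In the interior case the limit $v$ satisfies $(-\Delta)^m v = v^p$ on $\mathbb{R}^n$ with $v(0)=1$; boundedness of $v$ makes the assumption $v(x)=o(|x|^2)$ at infinity trivially true, and $1 < p < (n+2m)/(n-2m)$ is exactly our hypothesis, so Theorem~\ref{Thm0} forces $v \equiv 0$, contradicting $v(0)=1$. In the boundary case, after translating $x_k$ to a nearest boundary point and rotating so that $\Omega_k$ exhausts $\mathbb{R}^n_+$ (the $C^{2m-2}$ regularity of $\partial \Omega$ is used here to straighten the boundary in the limit), the limit $v$ is a nontrivial bounded nonnegative classical solution of $(-\Delta)^m v = v^p$ on $\mathbb{R}^n_+$ with Navier data $v = -\Delta v = \cdots = (-\Delta)^{m-1} v = 0$ on $\partial \mathbb{R}^n_+$; the half-space Liouville theorem then yields $v \equiv 0$, again a contradiction.

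The main obstacle will be the half-space Liouville theorem: unlike on the full space, one cannot directly apply the Kelvin transform and moving planes to the polyharmonic Navier problem, and the cleanest route is to decouple it as the cooperative second-order system $-\Delta u_j = u_{j+1}$ for $j=0,\ldots,m-1$ with $u_0 = v$, $u_m = v^p$, and $u_j|_{\partial \mathbb{R}^n_+} = 0$, then run moving planes on this system. The lower exponent restriction $p > n/(n-2m)$ in Theorem~\ref{Thm-CFL} originates precisely at this step, where it ensures the decay and integrability needed for the Kelvin-transformed system.
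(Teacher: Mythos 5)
There is nothing in the paper to compare your argument against line by line: Theorem \ref{Thm-CFL} is not proved in this paper at all, it is quoted verbatim as a consequence of Theorem 6 of \cite{CFL}, and the authors use it later (in \eqref{4-CFL}) as a black box. Your blow-up scheme is indeed the standard mechanism behind such a priori bounds, and the interior half of it is sound: boundedness of the rescaled limit makes the $o(|x|^{2})$ hypothesis of Theorem \ref{Thm0} automatic, so with $a=0$ and $1<p<\frac{n+2m}{n-2m}$ the whole-space limit is killed exactly as you say. In that sense your proposal is a faithful reconstruction of what lies behind the cited result rather than a different route.

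The weak point is that the decisive ingredient — the Liouville theorem for $(-\Delta)^{m}v=v^{p}$ on $\mathbb{R}^{n}_{+}$ with Navier data — is precisely the content of \cite{CFL} and is left unproved in your sketch; it is not a routine moving-planes exercise on the decoupled second-order system, since one first needs the super poly-harmonic property on the half space and the passage to an equivalent integral system (this is where the restriction $p>\frac{n}{n-2m}$ genuinely enters, as you correctly guess). Deferring it puts your proposal at the same logical level as the paper's citation, which is acceptable here but means you have not produced an independent proof. Two smaller gaps you would need to close to make the argument rigorous: (i) the stated constant is independent of $t$, so in a blow-up along a sequence $(u_{k},t_{k})$ you cannot deduce $t_{k}M_{k}^{-p}\to0$ from $p>1$ alone; you need $t_{k}$ bounded, which follows from the eigenfunction argument (no positive solution exists once $t\geq C_{2}(\lambda_{1},p)$, cf.\ \eqref{4-19}--\eqref{4-21}); (ii) in the boundary regime you must also rule out $\mathrm{dist}(x_{k},\partial\Omega)/\lambda_{k}\to0$, e.g.\ by uniform boundary Schauder estimates giving $1=v_{k}(0)\leq C\,\mathrm{dist}(0,\partial\Omega_{k})\to0$, since otherwise the limit point where $v=1$ would land on $\partial\mathbb{R}^{n}_{+}$ where the Navier data force $v=0$ and the half-space Liouville theorem is not the right contradiction.
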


As an application of the Liouville theorems (Theorem \ref{Thm0}), we can prove the following a priori estimates for any positive solution $u$ to \eqref{tNavier} via the method of moving planes in local way and blowing-up methods (for related literatures on these methods, please see \cite{BC,BM,CDQ,CL3,CL4,CY0,Li,SZ1}). Our a priori estimates extend the range of $p$ in Theorem \ref{Thm-CFL} remarkably.
\begin{thm}\label{Thm1}
Assume $1<p<\frac{n+2m}{n-2m}$. If one of the following two assumptions
\begin{equation*}
  \text{i)} \,\,\, \Omega \,\, \text{is strictly convex}, \,\, 1<p<\frac{n+2m}{n-2m}, \quad\quad\quad\, \text{or} \quad\quad\quad\, \text{ii)} \,\,\, 1<p\leq\frac{n+2}{n-2}
\end{equation*}
holds. Then, for any positive solution $u\in C^{2m}(\Omega)\cap C^{2m-2}(\overline{\Omega})$ to the higher order Navier problem \eqref{tNavier}, we have
\begin{equation*}
  \|u\|_{L^{\infty}(\overline{\Omega})}\leq C(n,m,p,t,\lambda_{1},\Omega),
\end{equation*}
where $\lambda_{1}$ is the first eigenvalue for $(-\Delta)^{m}$ in $\Omega$ with Navier boundary conditions.
\end{thm}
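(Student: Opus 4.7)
The plan is a standard blow-up/rescaling argument driven by Theorem \ref{Thm0}, combined with the method of moving planes in local way to exclude boundary blow-up. Arguing by contradiction, suppose the estimate fails: there exist sequences $t_k\geq 0$ and positive solutions $u_k\in C^{2m}(\Omega)\cap C^{2m-2}(\overline{\Omega})$ of \eqref{tNavier} with $t=t_k$ such that $M_k:=\|u_k\|_{L^{\infty}(\overline{\Omega})}=u_k(x_k)\to+\infty$. Set $\beta_k:=M_k^{-(p-1)/(2m)}\to 0^{+}$ and rescale
\[
v_k(y):=M_k^{-1}u_k(x_k+\beta_k y), \qquad y\in\Omega_k:=\beta_k^{-1}(\Omega-x_k),
\]
so that $0\leq v_k\leq v_k(0)=1$ and
\[
(-\Delta)^{m}v_k=v_k^{p}+t_k\beta_k^{2mp/(p-1)} \quad\text{in }\Omega_k,
\]
with Navier boundary data on $\partial\Omega_k$ and the inhomogeneous term vanishing as $k\to\infty$. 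Rewriting this as the cooperative Navier system $-\Delta v_k^{(i)}=v_k^{(i+1)}$ for $1\leq i<m$, $-\Delta v_k^{(m)}=v_k^{p}+o(1)$, where $v_k^{(i)}:=(-\Delta)^{i-1}v_k$ satisfies $v_k^{(i)}=0$ on $\partial\Omega_k$ and $v_k^{(i)}\geq 0$ by the super poly-harmonic property (Theorem \ref{lemma0}), standard elliptic regularity yields uniform $C^{2m,\alpha}_{\mathrm{loc}}$ bounds. Along a subsequence, $v_k\to v$ in $C^{2m}_{\mathrm{loc}}(D)$, where the limit domain $D$ is determined by the ratio $d_k/\beta_k$ with $d_k:=\mathrm{dist}(x_k,\partial\Omega)$.

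If $d_k/\beta_k\to+\infty$ (in particular when $x_k\to x_\infty\in\Omega$), then $D=\mathbb{R}^{n}$ and $v\geq 0$ solves $(-\Delta)^{m}v=v^{p}$ on $\mathbb{R}^{n}$ with $v(0)=1$; since $1<p<\frac{n+2m}{n-2m}$, Theorem \ref{Thm0} (with $a=0$) forces $v\equiv 0$, a contradiction. It therefore suffices to rule out $\limsup_k d_k/\beta_k<\infty$, in which case (after recentering at the nearest boundary point of $\partial\Omega$) the limit becomes a half-space Navier problem $(-\Delta)^{m}v=v^{p}$ on $\mathbb{R}^{n}_{+}$ with $0\leq v\leq 1$ and $v$ attaining the value $1$. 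Under hypothesis (i), strict convexity of $\Omega$ lets me run the method of moving planes in local way on the cooperative system $\{(-\Delta)^{i-1}u_k\}_{i=1}^{m}$ uniformly in $k$: strict convexity furnishes a fixed $\delta_0>0$ such that every tangent hyperplane to $\partial\Omega$ can be moved inward a distance $\delta_0$ without the reflected cap touching $\partial\Omega$, which yields monotonicity of each $(-\Delta)^{i-1}u_k$ in a boundary layer of uniform width $\delta_0$. Consequently $d_k\geq\delta_0$, so $d_k/\beta_k\to+\infty$, excluding the half-space case. Under hypothesis (ii), where $1<p\leq\frac{n+2}{n-2}$, one instead argues directly on the half-space limit: extending $v$ by iterated odd reflection across $\partial\mathbb{R}^{n}_{+}$ yields a classical entire solution of $(-\Delta)^{m}V=|V|^{p-1}V$ on $\mathbb{R}^{n}$, and a Kelvin transform together with the method of moving planes (valid precisely because $p\leq\frac{n+2}{n-2}$) forces $v\equiv 0$, again contradicting the normalization.

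The main obstacle is this boundary step. Unlike the scalar second-order de Figueiredo--Lions--Nussbaum setting, one must run moving planes on an $m$-component cooperative Navier system, and both initiating and propagating the reflection require nonnegativity of each $(-\Delta)^{i-1}u_k$ in a boundary strip --- precisely where the super poly-harmonic properties of Theorem \ref{lemma0} enter. Keeping the moving distance $\delta_0$ uniform in $k$ (rather than merely positive for each fixed $k$) is the delicate point, and is the reason the theorem splits into the strictly convex case (i) and the subcritical range (ii), which exploits a half-space Liouville theorem instead of geometric uniformity.
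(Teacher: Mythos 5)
Your interior blow-up step and your treatment of case i) follow the paper's skeleton (moving planes in local way near $\partial\Omega$, with strict convexity giving a reflection width $\delta_{0}$ depending only on $\Omega$, then blow-up at interior points and Theorem \ref{Thm0} with $a=0$). Two small repairs are needed there: the monotonicity along inner normals does not literally give $d_k\geq\delta_0$ (the point $x^{0}_{k}+\delta_{0}\nu_{x^{0}_{k}}$ need not be at distance $\delta_{0}$ from $\partial\Omega$); one should instead say that the maximum is \emph{also} attained at such a point, which lies in a fixed compact subset of $\Omega$, and then blow up there. Also, the nonnegativity of $(-\Delta)^{i-1}u_{k}$ in $\Omega$ comes from the maximum principle and the Navier boundary conditions (as in \eqref{3-3}), not from Theorem \ref{lemma0}, which concerns entire solutions; and the uniformity in $k$ of the moving-plane width, which you flag but do not carry out, is exactly the narrow-region continuation argument of Theorem \ref{Boundary} (the $\|u_{k}\|_{L^{\infty}}$-dependent smallness only enters the starting step and the narrow-region step, while the final width is geometric).

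The genuine gap is case ii). You propose to allow a half-space limit $(-\Delta)^{m}v=v^{p}$ in $\mathbb{R}^{n}_{+}$ with Navier data and to kill it by odd reflection plus ``Kelvin transform and moving planes, valid precisely because $p\leq\frac{n+2}{n-2}$.'' Odd reflection produces a \emph{sign-changing} entire solution of $(-\Delta)^{m}V=|V|^{p-1}V$, to which Theorem \ref{Thm0} (stated only for nonnegative solutions) does not apply; no Liouville theorem for sign-changing entire solutions, nor for the higher order Navier half-space problem in the range $1<p\leq\frac{n+2}{n-2}$, is available in this paper (the half-space machinery of Chen--Fang--Li behind Theorem \ref{Thm-CFL} requires $p>\frac{n}{n-2m}$), and the $C^{2m}$ matching of the odd extension across the hyperplane is itself unjustified. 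Moreover, the exponent restriction $p\leq\frac{n+2}{n-2}$ plays a completely different role in the paper: case ii) is handled entirely inside the bounded domain by Kelvin-transforming $u$ and each $(-\Delta)^{i}u$ about an exterior point $\overline{x^{0}}$, so that $B_{\varepsilon_{0}}((\overline{x^{0}})^{\ast})\cap\partial\Omega^{\ast}$ becomes strictly convex and the transformed system \eqref{3-56} has zeroth-order weight $|x^{\ast}-\overline{x^{0}}|^{-\tau}$ with $\tau=n+2-p(n-2)\geq0$ (this is exactly where $p\leq\frac{n+2}{n-2}$ is used), after which the local moving planes, the cone argument and the eigenfunction bound of Lemma \ref{lemma2} give the uniform boundary layer estimate of Theorem \ref{Boundary}; consequently blow-up points stay a fixed distance from $\partial\Omega$ and no half-space limit ever has to be excluded. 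As written, your case ii) rests on an unproven Liouville-type theorem and therefore does not close the argument.
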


As a consequence of the a priori estimates (Theorem \ref{Thm-CFL} and Theorem \ref{Thm1}), by applying the Leray-Schauder fixed point theorem, we can derive the following existence result for positive solution to the following Navier problem for higher order Lane-Emden equations
\begin{equation}\label{Navier}\\\begin{cases}
(-\Delta)^{m}u(x)=u^{p}(x) \,\,\,\,\,\,\,\,\,\, \text{in} \,\,\, \Omega, \\
u(x)=-\Delta u(x)=\cdots=(-\Delta)^{m-1}u(x)=0 \,\,\,\,\,\,\,\, \text{on} \,\,\, \partial\Omega,
\end{cases}\end{equation}
where $n\geq3$, $1\leq m<\frac{n}{2}$ and $\Omega\subset\mathbb{R}^{n}$ is a bounded domain with $C^{2m-2}$ boundary $\partial\Omega$.
\begin{thm}\label{Thm2}
Assume $1<p<\frac{n+2m}{n-2m}$. If one of the following two assumptions
\begin{equation*}
  \text{i)} \,\, \Omega \,\, \text{is strictly convex}, \,\, 1<p<\frac{n+2m}{n-2m}, \quad\, \text{or} \quad\, \text{ii)} \,\, p\in\left(1,\frac{n+2}{n-2}\right]\bigcup\left(\frac{n}{n-2m},\frac{n+2m}{n-2m}\right)
\end{equation*}
holds. Then, the higher order Navier problem \eqref{Navier} possesses at least one positive solution $u\in C^{2m}(\Omega)\cap C^{2m-2}(\overline{\Omega})$. Moreover, the positive solution $u$ satisfies
\begin{equation*}
  \|u\|_{L^{\infty}(\overline{\Omega})}\geq\left(\frac{\sqrt{2n}}{diam\,\Omega}\right)^{\frac{2m}{p-1}}.
\end{equation*}
\end{thm}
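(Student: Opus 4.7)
The plan is to recast \eqref{Navier} as a fixed-point problem for a compact operator on $C(\overline{\Omega})$ and apply the Leray--Schauder degree theorem, using Theorem \ref{Thm-CFL} and Theorem \ref{Thm1} to control positive solutions of the deformed problem \eqref{tNavier}. Let $K$ denote the solution operator of $(-\Delta)^m$ with Navier boundary conditions on $\Omega$, which is compact from $C(\overline{\Omega})$ to itself by standard elliptic theory. Define $\Phi_t(u):=K(|u|^p+t)$ for $t\geq 0$. Any fixed point $u$ of $\Phi_t$ is automatically nonnegative by iterating the maximum principle on the cascade $v_k:=(-\Delta)^{m-k}u$, and strictly positive in $\Omega$ whenever $|u|^p+t\not\equiv 0$ by the strong maximum principle; elliptic regularity then upgrades $u$ to $C^{2m}(\Omega)\cap C^{2m-2}(\overline{\Omega})$, so a nontrivial fixed point of $\Phi_0$ is exactly what we seek.

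For the degree calculation I first establish nonexistence for large $t$. Let $\phi_1>0$ be the first Navier eigenfunction of $(-\Delta)^m$ on $\Omega$ with eigenvalue $\lambda_1$. Testing \eqref{tNavier} against $\phi_1$ and integrating by parts gives
\begin{equation*}
\lambda_1\int_{\Omega} u\phi_1=\int_{\Omega}u^p\phi_1+t\int_{\Omega}\phi_1,
\end{equation*}
and Young's inequality with exponents $p$ and $p/(p-1)$ absorbs the $u^p$ term and yields $t\leq t_\star:=\frac{p-1}{p}\lambda_1^{p/(p-1)}$, so \eqref{tNavier} has no positive solution once $t>t_\star$. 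Fix $T>t_\star$. Under assumption (i), or assumption (ii) with $p\leq(n+2)/(n-2)$, Theorem \ref{Thm1} provides an a priori bound, while in the remaining range $p\in(n/(n-2m),(n+2m)/(n-2m))$ of (ii), Theorem \ref{Thm-CFL} does, yielding $M=M(T,n,m,p,\lambda_1,\Omega)$ with $\|u\|_\infty\leq M$ for all positive solutions of \eqref{tNavier} with $t\in[0,T]$. Choose $R>M$; the homotopy $s\mapsto I-\Phi_{sT}$, $s\in[0,1]$, has no zero on $\partial B_R$, and $\Phi_T$ has no fixed point in $\overline{B_R}$, so by homotopy invariance $\deg(I-\Phi_0,B_R,0)=0$. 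Since $p>1$, $\|\Phi_0(u)\|_\infty\leq C\|u\|_\infty^p<\|u\|_\infty$ whenever $\|u\|_\infty=\rho$ is small, and the linear homotopy $u\mapsto u-s\Phi_0(u)$ gives $\deg(I-\Phi_0,B_\rho,0)=1$. Excision then produces a fixed point in $B_R\setminus\overline{B_\rho}$, which is the required positive solution of \eqref{Navier}.

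It remains to derive the lower bound on $\|u\|_\infty$. Set $v_k:=(-\Delta)^{m-k}u$ for $k=0,1,\dots,m$, so $-\Delta v_k=v_{k-1}$ in $\Omega$ with $v_k|_{\partial\Omega}=0$ for $k\geq 1$, and iterated maximum principles give $v_k\geq 0$. Fix $x_0\in\Omega$ with $\Omega\subset B_d(x_0)$ where $d=\mathrm{diam}\,\Omega$, and set $\varphi(x):=\frac{\|v_{k-1}\|_\infty}{2n}(d^2-|x-x_0|^2)$. Then $-\Delta\varphi=\|v_{k-1}\|_\infty\geq v_{k-1}$ in $\Omega$ and $\varphi\geq 0=v_k$ on $\partial\Omega$, so the comparison principle gives $\|v_k\|_\infty\leq\frac{d^2}{2n}\|v_{k-1}\|_\infty$. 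Iterating from $k=1$ to $k=m$ with $v_0=u^p$ and $v_m=u$ yields $M\leq(d^2/(2n))^m M^p$ where $M:=\|u\|_\infty$, whence $M\geq(\sqrt{2n}/d)^{2m/(p-1)}$.

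The main technical obstacle is ensuring the a priori bounds of Theorems \ref{Thm1} and \ref{Thm-CFL} are uniform for $t$ in a compact subinterval of $[0,\infty)$, since these results as stated allow the constant to depend on $t$; this uniformity is crucial for the homotopy argument above. I expect it to follow by inspecting the underlying blow-up proof: a putative unbounded sequence $(u_n,t_n)$ with $t_n\in[0,T]$ rescales to a nontrivial nonnegative solution of $(-\Delta)^m u=u^p$ on $\mathbb{R}^n$ (or on a half-space when the concentration points approach $\partial\Omega$), since the additive constant $t_n$ becomes negligible after rescaling; this contradicts either Theorem \ref{Thm0} or its standard half-space counterpart.
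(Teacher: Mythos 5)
Your proposal is correct and rests on the same pillars as the paper's proof --- a priori bounds for the one-parameter family \eqref{tNavier}, nonexistence for large $t$, a smallness estimate near $u=0$, and a topological argument --- but the implementation is genuinely different. The paper works in the cone $P$ of nonnegative functions and applies the cone fixed point theorem (Theorem \ref{L-S}) to the iterated Green operator $T$: condition i) is verified with the explicit radius $\rho=(\sqrt{2n}/\mathrm{diam}\,\Omega)^{2m/(p-1)}$ obtained by comparing $g=\int_{\Omega}G_{2}(\cdot,y)\,dy$ with a quadratic barrier, and condition ii) ($u-Tu\neq t\varphi$) is checked by first forcing $t<C_{2}$ via $u^{p}\geq C_{1}u-C_{2}$ with $C_{1}>\lambda_{1}$ and then invoking the ($t$-uniform) a priori bounds; the lower bound on $\|u\|_{L^{\infty}}$ then comes for free because the fixed point lies outside $B_{\rho}$. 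You instead run classical Leray--Schauder degree on all of $C(\overline{\Omega})$ with the homotopy $s\mapsto I-\Phi_{sT}$, prove nonexistence for $t>t_{\star}=\frac{p-1}{p}\lambda_{1}^{p/(p-1)}$ by testing with $\phi_{1}$ and Young's inequality (this computation is correct and is the quantitative version of the paper's bound $t<C_{2}$), get degree $0$ on a large ball and $1$ on a small ball, and extract a solution in the annulus; the lower bound is then derived separately by iterating $\|(-\Delta)^{m-k}u\|_{\infty}\leq\frac{(\mathrm{diam}\,\Omega)^{2}}{2n}\|(-\Delta)^{m-k+1}u\|_{\infty}$, which is the same quadratic-barrier computation as \eqref{4-7}--\eqref{4-9} but applied to the solution cascade instead of the operator norm, and in fact gives the bound for every positive solution of \eqref{Navier}, not only the constructed one. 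Your split of the a priori bounds (Theorem \ref{Thm1} for case i) and for $p\leq\frac{n+2}{n-2}$, Theorem \ref{Thm-CFL} for $p\in(\frac{n}{n-2m},\frac{n+2m}{n-2m})$) matches the paper's.

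The one step you leave open --- uniformity of the bound for $t$ in a compact interval --- is resolved in the paper exactly as you anticipate: Theorem \ref{Thm-CFL} is already $t$-independent, and in the remaining cases one re-runs the blow-up of Section 3, where $t_{k}/m_{k}^{p}\to0$ makes the additive constant disappear in the limit (see \eqref{4-24}--\eqref{4-27}). However, your parenthetical fallback to a ``standard half-space counterpart'' of Theorem \ref{Thm0} is neither needed nor available here: the constant and the thickness $\bar{\delta}$ in the boundary layer estimate (Theorem \ref{Boundary}) are uniform in $t\geq0$, because Lemma \ref{lemma2} simply discards the $t$-term, so the blow-up points remain in $\Omega\setminus\overline{\Omega}_{\bar{\delta}}$ uniformly in $t$ and only the whole-space Liouville theorem is ever invoked. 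You should justify the uniformity this way rather than appeal to a half-space Liouville theorem for the higher order Navier problem in the full range $1<p<\frac{n+2m}{n-2m}$, which is not established in the paper.
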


It's well known that the super poly-harmonic properties of solutions are crucial in establishing Liouville type theorems and the representation formulae for higher order or fractional order PDEs (see e.g. \cite{CDQ,CF,CFL,CL2,WX}). In Section 2, we will first prove the super poly-harmonic properties of solutions for both $a<0$ and $a\geq0$ via a unified approach (see Theorem \ref{lemma0}). As a consequence, we can show the equivalence between the PDE \eqref{PDE} and the corresponding integral equation \eqref{IE}. Then, by applying the method of moving planes in integral forms and Pohozaev identity, we prove the Liouville theorem (Theorem \ref{Thm0}) for \eqref{PDE}. In Sections 3 and 4, we will prove a priori estimates and existence of positive solutions to non-critical higher order Lane-Emden equations in bounded domains $\Omega$, using the arguments from Chen, Dai and Qin \cite{CDQ} for critical order Hardy-H\'{e}non equations and results from Chen, Fang and Li \cite{CFL}. In Section 3, we will derive a priori estimates for any positive solutions to the higher order Naiver problem \eqref{tNavier} (Theorem \ref{Thm1}) by applying the method of moving planes in local way and Kelvin transforms. We will first establish the boundary layer estimates (Theorem \ref{Boundary}), in which the properties of the boundary $\partial\Omega$ play a crucial role. The global a priori estimates follows from the boundary layer estimates, blowing-up analysis and the Liouville theorem (Theorem \ref{Thm0}). Section 4 is devoted to the proof of Theorem \ref{Thm2}. The existence of positive solutions to the higher order Lane-Emden equations \eqref{Navier} with Navier boundary conditions will be established via the a priori estimates (Theorem \ref{Thm-CFL} and Theorem \ref{Thm1}) and the Leray-Schauder fixed point theorem (Theorem \ref{L-S}).

\section{Proof of Theorem \ref{Thm0}}

In this section, we will prove Theorem \ref{Thm0} by using contradiction arguments. Now suppose on the contrary that $u\geq0$ satisfies equation \eqref{PDE} but $u$ is not identically zero, then there exists some $\bar{x}\in\mathbb{R}^{n}$ such that $u(\bar{x})>0$.

In the following, we will use $C$ to denote a general positive constant that may depend on $n$, $m$, $a$, $p$ and $u$, and whose value may differ from line to line.

\subsection{Super poly-harmonic properties}
The super poly-harmonic properties of solutions are closely related to the representation formulae and Liouville type theorems (see \cite{CDQ,CF,CFL,CL2,WX} and the references therein). Therefore, in order to prove Theorem \ref{Thm0}, we need the following theorem about the super poly-harmonicity.

\begin{thm}\label{lemma0}(Super poly-harmonic properties). Assume $n\geq3$, $1\leq m<\frac{n}{2}$, $-\infty<a<2m$, $1<p<+\infty$ and $u$ is a nonnegative solution of \eqref{PDE}. If one of the following two assumptions
\begin{equation*}
  -\infty<a\leq2+2p \,\,\,\,\,\,\,\,\,\,\,\, \text{or} \,\,\,\,\,\,\,\,\,\,\,\, u(x)=o(|x|^{2}) \,\,\,\, \text{as} \,\, |x|\rightarrow+\infty
\end{equation*}
holds, then
\begin{equation*}
  (-\Delta)^{i}u(x)\geq0
\end{equation*}
for every $i=1,2,\cdots,m-1$ and all $x\in\mathbb{R}^{n}$.
\end{thm}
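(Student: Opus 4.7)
The plan is to argue by contradiction using spherical averages. Setting $u_{0}:=u$ and $u_{i}:=(-\Delta)^{i}u$ for $1\leq i\leq m-1$, the $2m$-th order equation decouples into the second-order chain
\begin{equation*}
-\Delta u_{i}=u_{i+1}\quad (0\leq i\leq m-2),\qquad -\Delta u_{m-1}=\frac{u^{p}}{|x|^{a}},
\end{equation*}
so the conclusion will follow once I rule out the existence of any point where some $u_{k}$ is strictly negative. Since $u\not\equiv 0$ and $u\geq 0$, standard continuity arguments give a ball on which $u$ is bounded below by a positive constant, which will be the source of the ``strict negative drift'' fed into the chain.

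I would then pass to the spherical averages $\bar{u}_{i}(r):=\frac{1}{|\partial B_{r}|}\int_{\partial B_{r}(0)}u_{i}\, d\sigma$ centered at the origin -- note that the origin is singled out by the nonlinearity, so one cannot translate freely. These satisfy the same ODE chain $-\Delta_{r}\bar{u}_{i}=\bar{u}_{i+1}$ and, by Jensen's inequality,
\begin{equation*}
-\Delta_{r}\bar{u}_{m-1}(r)\;\geq\; r^{-a}\,\bar{u}^{p}(r)\;\geq\;0.
\end{equation*}
Multiplying by $r^{n-1}$ and integrating -- using the $C^{2m-2}$ regularity of $u$ together with $a<2m$ to guarantee local integrability of the right-hand side near $r=0$, which forces the integration constants to vanish -- gives the baseline monotonicity $\bar{u}_{m-1}'(r)\leq 0$, together with a quantitative strictly negative drift of the form $\bar{u}_{m-1}'(r)\leq -c\,r^{1-n}$ for $r$ beyond the ball on which $u$ is bounded below.

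The core contradiction step is: suppose that $\bar{u}_{k^{*}}(r_{*})<0$ for the smallest index $k^{*}\in\{1,\ldots,m-1\}$ at which this occurs. Iterating the Poisson-type identity
\begin{equation*}
\bar{u}_{i}(r)=\bar{u}_{i}(0)-\int_{0}^{r}t^{1-n}\int_{0}^{t}s^{n-1}\bar{u}_{i+1}(s)\,ds\,dt
\end{equation*}
from $i=k^{*}-1$ down to $i=0$, I would show that $\bar{u}(r)\to -\infty$ along some sequence $r\to\infty$, contradicting $u\geq 0$. The dichotomy in the hypothesis enters exactly here: under $u(x)=o(|x|^{2})$, the polynomial harmonic corrections generated by the iterated integrations are directly forbidden from cancelling the negative drift; under $a\leq 2+2p$, one instead closes the loop by combining the iterated integrations with the nonlinear feedback $r^{-a}\bar{u}^{p}$, deriving an autonomous differential inequality on $\bar{u}$ whose nonnegative solutions cannot exist globally.

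The main obstacle is the careful bookkeeping of the $r^{2j}$-type ``harmonic correction'' terms produced by each pass of the double integration: they grow polynomially and must be shown either absent (under $u=o(|x|^{2})$) or strictly dominated by the nonlinear feedback (under $a\leq 2+2p$), uniformly across the possible choices of $k^{*}\in\{1,\ldots,m-1\}$. This is precisely where the proofs in \cite{CLiu,Lei} contain the technical gaps flagged in Remark \ref{remark1} -- in particular, one has to exclude the degenerate possibility that a certain limiting constant vanishes, and one must insert the correct $R^{-a}$ factor when $R$ is taken large. The real work is to package these two branches into a single argument that treats $a<0$ and $a\geq 0$ on equal footing, which is what makes Theorem \ref{lemma0} a genuine unification and extension of the earlier results.
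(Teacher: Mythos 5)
Your outline has two genuine gaps, and the first is structural. You propose to work only with spherical averages centered at the origin, explicitly declining to translate because the weight $|x|^{-a}$ singles out the origin. But the statement to be disproved is pointwise: $(-\Delta)^{i}u(x^{1})<0$ at some arbitrary point $x^{1}$, which gives no information whatsoever about origin-centered averages; conversely, nonnegativity of all $\bar{u}_{i}(r)$ centered at $0$ is strictly weaker than the conclusion of the theorem. So your contradiction hypothesis ``$\bar{u}_{k^{*}}(r_{*})<0$'' is not the negation of the statement, and the argument cannot close. The paper's proof is built precisely on overcoming the obstacle you set aside: one centers the average at the bad point $x^{1}$ (so that $\overline{u_{m-1}}(0)=u_{m-1}(x^{1})<0$) and then performs $m$ successive re-centerings at points $x^{2},\dots,x^{m}$ where signs flip, controlling the weight by $|x|^{-a}\geq\big(r+|x^{1}|+\cdots+|x^{m}-x^{m-1}|\big)^{-a}$ when $a\geq0$ and by choosing the final center $|x^{m}|$ so large that the relevant balls avoid the origin when $a<0$ (inequalities \eqref{2-5}, \eqref{2-5'}, \eqref{2-100}). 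Without some device of this kind the averaging method cannot see the pointwise negativity at all. Relatedly, your induction should be organized around the \emph{largest} failing index (all higher-order $u_{j}$ nonnegative), not the smallest, since it is the nonnegativity of $u_{m-i+1}$ that makes $u_{m-i}$ superharmonic and starts the sign-alternating chain.

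The second gap concerns the engine of the contradiction. In the hardest case (showing $(-\Delta)^{m-1}u\geq0$ when $m$ is even) the paper does \emph{not} use the dichotomy hypothesis at all: it exploits the scaling invariance $u_{\lambda}(x)=\lambda^{\frac{2m-a}{p-1}}u(\lambda x)$ to make $\widetilde{u}(0)$ as large as needed, and then runs the super-geometric bootstrap $\widetilde{u}(r)\geq l_{k}r^{\alpha_{k}}$ on $[0,1]$ with $\alpha_{k+1}=2p\alpha_{k}$, $l_{k+1}=C_{0}l_{k}^{p}/(2p\alpha_{k})^{2m}$, forcing $\widetilde{u}(1)=+\infty$. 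Your substitute — feeding the constant lower bound on $\bar{u}$ into $-\Delta_{r}\bar{u}_{m-1}\geq r^{-a}\bar{u}^{p}$ and integrating — only produces a drift of order $-r^{2-a}$, which is useless for $2\leq a<2m$; this is exactly the $R^{-a}$ issue from \cite{Lei} that Remark \ref{remark1} flags (and why $a<2$ was needed there), and an origin-centered version inherits rather than repairs it. The hypothesis $a\leq2+2p$ or $u=o(|x|^{2})$ is needed only in the second stage (a failing index strictly below $m-1$), where the chain gives genuine quadratic growth $\widetilde{u}(r)\gtrsim r^{2}$, so that $\widetilde{u}^{p}/r^{a}\gtrsim r^{2p-a}$ drives $\widetilde{u_{m-1}}\to-\infty$, contradicting Step 1. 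As written, your sketch conflates these two stages and omits the rescaling-plus-bootstrap mechanism that carries the main weight of the proof.
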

\begin{proof}
Let $u_{i}:=(- \Delta)^{i}u$. We want to show that $u_{i}\geq0$ for $i=1,2,\cdots, m-1$. Our proof will be divided into two steps.

\textbf{\emph{Step 1.}} We first show that
\begin{equation}\label{2-1}
u_{ m-1}=(-\Delta)^{ m- 1}u\geq0.
\end{equation}
If not, then there exists $0\neq x^{1}\in\mathbb{R}^n$, such that
\begin{equation}\label{2-2}
  u_{ m-1}(x^{1})<0.
\end{equation}

Now, let
\begin{equation}\label{2-3}
  \bar{f}(r)=\bar{f}\big(|x-x^1|\big):=\frac{1}{|\partial B_{r}(x^{1})|}\int_{\partial B_{r}(x^{1})}f(x)d\sigma
\end{equation}
be the spherical average of $f$ with respect to the center $x^1$. Then, by the well-known property $\overline{\Delta u}=\Delta\bar{u}$ and $-\infty<a<2m<n$, we have, for any $r\geq0$ and $r\neq|x^{1}|$,
\begin{equation}\label{2-4}
\left\{{\begin{array}{l} {-\Delta\overline{u_{ m-1}}(r)=\overline{\frac{u^{p}(x)}{|x|^{a}}}(r)}, \\  {} \\ {-\Delta\overline{u_{ m-2}}(r)=\overline{u_{ m-1}}(r)}, \\ \cdots\cdots \\ {-\Delta\overline u(r)=\overline{u_1}(r)}. \\ \end{array}}\right.
\end{equation}
From the first equation in \eqref{2-4}, by Jensen's inequality, we get, for any $r\geq0$ and $r\neq|x^{1}|$,
\begin{align}\label{2-5}
-\Delta\overline{u_{ m-1}}(r)&=\frac{1}{{| {\partial
B_{r}({x^{1}})}| }}\int_{\partial B_{r}(
{x^{1}})}\frac{{u^{p}(x)}}{|x|
^{a}}d\sigma\nonumber\\
& \geq({r+| {x^{1}}| })^{-a}\frac
{1}{{| {\partial B_{r}({x^{1}})}| }}
\int_{\partial B_{r}({x^{1}})}{u^{p}(  x)}d\sigma\\
&  \geq({r+| {x^{1}}| })^{-a}\left(
{\frac{1}{{| {\partial B_{r}({x^{1}})}| }
}\int_{\partial B_{r}({x^{1}})}{u(x)
}d\sigma}\right)^{p}\nonumber\\
& =(r+|x^{1}|)^{-a}\bar{u}^{p}(r)\geq0 \quad\quad \text{if} \,\,\, 0\leq a<2m,\nonumber
\end{align}
and
\begin{equation}\label{2-5'}
  -\Delta\overline{u_{ m-1}}(r)\geq\big|r-|x^{1}|\big|^{-a}\bar{u}^{p}(r)\geq0 \quad\quad \text{if} \,\, -\infty<a<0.
\end{equation}
From \eqref{2-5} and \eqref{2-5'}, one has
\begin{equation}\label{2-6}
  -\frac{1}{r^{n-1}}\Big(r^{n-1}\overline{u_{ m-1}}\,'(r)\Big)'\geq0.
\end{equation}
Since $-\infty<a<2m<n$, we can integrate both sides of \eqref{2-6} from $0$ to $r$ and derive
\begin{equation}\label{2-7}
\overline{u_{ m-1}}\,'(r)\leq0, \,\,\,\,\,\, \overline{u_{ m-1}}(r)\leq\overline{u_{ m-1}}(0)=u_{ m-1}(x^{1})=:-c_{0}<0
\end{equation}
for any $r\geq0$. From the second equation in \eqref{2-4}, we deduce that
\begin{equation}\label{2-8}
-\frac{1}{{r^{n-1}}}\Big({r^{n-1}\overline{u_{ m-2}}\,'}(r)\Big)'=\overline{u_{ m-1}}(r)\leq-c_{0}, \,\,\,\,\,\, \forall \,\, r\geq0,
\end{equation}
integrating from $0$ to $r$ yields
\begin{equation}\label{2-9}
\overline{u_{ m - 2}}\,'(r)\geq\frac{c_{0}}{n}r, \,\,\,\,\,\, \overline{u_{ m-2}}(r)\geq\overline{u_{ m-2}}(0)+\frac{c_{0}}{2n}r^{2}, \,\,\,\,\,\, \forall \,\, r\geq0.
\end{equation}
Hence, there exists $r_{1} > 0$ such that
\begin{equation}\label{2-10}
  \overline{u_{ m-2}}(r_{1})>0.
\end{equation}
Next, take a point $x^{2}$ with $|x^{2}-x^{1}|=r_{1}$ as the new center, and make average of $\bar{f}$ at the new center $x^{2}$, i.e.,
\begin{equation}\label{2-11}
\overline{\overline{f}}(r)=\overline{\overline{f}}\big(|x-x^{2}|\big):=\frac{1}{|\partial B_{r}(x^{2})|}\int_{\partial B_{r}(x^{2})}\bar f(x)d\sigma.
\end{equation}
One can easily verify that
\begin{equation}\label{2-12}
\overline{\overline{u_{ m-2}}}(0)=\overline{u _{ m - 2}}(x^{2})=:c_{1}>0.
\end{equation}
Then, from \eqref{2-5} and Jensen's inequality, we deduce that $(\overline{\overline{u}},\overline{\overline{u_{1}}},\cdots,\overline{\overline{u_{ m-1}}})$ satisfies
\begin{equation}\label{2-13}
\left\{{\begin{array}{l} {-\Delta\overline{\overline{u_{ m-1}}}(r)=\overline{\overline{\frac{u^{p}(x)}{|x|^{a}}}}(r)\geq0}, \\
{} \\
{-\Delta\overline{\overline{u_{ m-2}}}(r)=\overline{\overline{u_{ m-1}}}(r)}, \\ \cdots\cdots \\ {-\Delta\overline{\overline{u}}(r)=\overline{\overline{u _1}}(r)} \\ \end{array}}\right.
\end{equation}
for any $r\geq0$. Using the same method as obtaining the estimate \eqref{2-9}, we conclude that
\begin{equation}\label{2-14}
  \overline{\overline{u_{ m-2}}}(r)\geq\overline{\overline{u_{ m-2}}}(0)+\frac{{c_{0}}}{{2n}}r^{2}, \,\,\,\,\,\, \forall \,\, r\geq0.
\end{equation}
Thus we infer from \eqref{2-7}, \eqref{2-12}, \eqref{2-13} and \eqref{2-14} that
\begin{equation}\label{2-15}
\overline{\overline{u_{ m-1}}}(r)\leq\overline{\overline{u_{ m-1}}}(0)<0, \,\,\,\,\,\,\,\,\, \overline{\overline{u_{ m-2}}}(r)\geq\overline{\overline{u_{ m-2}}}(0)>0, \,\,\,\,\,\, \forall \,\, r\geq0.
\end{equation}
From the third equation in \eqref{2-13} and integrating, we infer that
\begin{equation}\label{2-16}
  \overline{\overline{u_{ m-3}}}\,'(r)\leq-\frac{c_{1}}{n}r \,\,\,\,\,\, \text{and} \,\,\,\,\,\, \overline{\overline{u_{ m-3}}}(r)\leq\overline{\overline{u_{ m-3}}}(0)-\frac{{c_{1}}}{{2n}}r^{2}, \,\,\,\,\,\, \forall \,\, r\geq0.
\end{equation}
Hence, there exists $r_{2}>0$ such that
\begin{equation}\label{2-17}
  \overline{\overline{u_{ m-3}}}(r_{2})<0.
\end{equation}
Next, we take a point $x^{3}$ with $|x^{3}-x^{2}|=r_{2}$ as the new center and make average of $\bar{\bar{f}}$ at the new center $x^{3}$, i.e.,
\begin{equation}\label{2-18}
\overline{\overline{\overline{f}}}(r)=\overline{\overline{\overline{f}}}\big(|x-x^{3}|\big):=\frac{1}{|\partial B_{r}(x^{3})|}\int_{\partial B_{r}(x^{3})}\overline{\overline{f}}(x)d\sigma.
\end{equation}
It follows that
\begin{equation}\label{2-19}
\overline{\overline{\overline{u_{ m-3}}}}(0)=\overline{\overline{u _{ m-3}}}(x^{3})=:-c_{2}<0.
\end{equation}
One can easily verify that $\overline{\overline{\overline{u}}}$ and $\overline{\overline{\overline{u_{i}}}}$ ($i=1,\cdots, m-1$) satisfy entirely similar equations as $(\overline{\overline{u}},\overline{\overline{u_{1}}},\cdots,\overline{\overline{u_{ m-1}}})$ (see \eqref{2-13}). Using the same method as deriving \eqref{2-15}, we arrive at
\begin{equation}\label{2-20}
\overline{\overline{\overline{u_{ m-1}}}}(r)\leq\overline{\overline{\overline{u_{ m-1}}}}(0)<0, \,\,\,\,\,\,\, \overline{\overline{\overline{u_{ m-2}}}}(r)\geq\overline{\overline{\overline{u_{ m-2}}}}(0)>0, \,\,\,\,\,\,\,
\overline{\overline{\overline{u_{ m-3}}}}(r)\leq\overline{\overline{\overline{u_{ m-3}}}}(0)<0
\end{equation}
for any $r\geq0$. Continuing this way, after $ m$ steps of re-centers (denotes the centers by $x^{1},x^{2},\cdots,x^{ m}$, the $ m$ times averages of $f$ by $\widetilde{f}$ and the resulting functions coming from taking $ m$ times averages by $\widetilde{u}$ and $\widetilde{u_{i}}$ for $i=1,2,\cdots, m-1$), we finally obtain that
\begin{equation}\label{2-21}
-\Delta\widetilde{u_{ m-1}}(r)\geq\widetilde{\frac{u^{p}(x)}{|x|^{a}}}(r)\geq0,
\end{equation}
and for every $i=1,\cdots, m-1$,
\begin{equation}\label{2-22}
(-1)^{i}\widetilde{u_{ m-i}}(r)\geq(-1)^{i}\widetilde{u_{ m-i}}(0)>0, \,\,\, \,\,\, (-1)^{ m}\widetilde{u}(r)\geq(-1)^{ m}\widetilde{u}(0)>0, \,\,\,\,\,\, \forall \,\, r\geq0.
\end{equation}
Moreover, in the above process, we may choose $|x^{ m}|$ sufficiently large, such that
\begin{equation}\label{2-100}
  |x^{ m}-x^{ m-1}|\geq|x^{ m-1}-x^{ m-2}|+\cdots+|x^{2}-x^{1}|+|x^{1}|+2.
\end{equation}

Now, if $ m$ is odd, estimate \eqref{2-22} implies immediately that
\begin{equation}\label{2-23}
  \widetilde{u}(r)\leq\widetilde{u}(0)<0,
\end{equation}
which contradicts the fact that $u\geq0$. Therefore, we only need to deal with the cases that $ m$ is an even integer hereafter.

Since $ m$ is even, we have $\widetilde{u}(r)\geq\widetilde{u}(0)>0$ for any $r\geq0$, furthermore, one can actually observe from the above ``re-centers and iteration" process that
\begin{equation}\label{2-101}
  \widetilde{u}(0)\geq\frac{c}{2n}|x^{ m}-x^{ m-1}|^{2}
\end{equation}
for some constant $c>0$. Thus we may choose $|x^{ m}|$ larger, such that both \eqref{2-100} and the following
\begin{equation}\label{2-102}
  \widetilde{u}(0)\geq(2p)^{\frac{2mp}{(p-1)^{2}}}\left(1+\frac{2n}{p}\right)^{\frac{2m}{p-1}}
\end{equation}
hold.

For arbitrary $\lambda>0$, define the re-scaling of $u$ by
\begin{equation}\label{2-24}
  u_{\lambda}(x):=\lambda^{\frac{2m-a}{p-1}}u(\lambda x).
\end{equation}
Then one can easily verify that equation \eqref{PDE} is invariant under this re-scaling. After $ m$ steps of re-centers for $u_{\lambda}$, we denote the centers for $u_{\lambda}$ by $x_{\lambda}^{1},x_{\lambda}^{2},\cdots,x_{\lambda}^{ m}$ and the resulting function coming from taking $ m$ times averages by $\widetilde{u_{\lambda}}$ and $\widetilde{u_{\lambda,i}}$ for $i=1,2,\cdots, m-1$. Then \eqref{2-21} and \eqref{2-22} still hold for $(\widetilde{u_{\lambda}},\widetilde{u_{\lambda,1}},\cdots,\widetilde{u_{\lambda, m-1}})$ and $x_{\lambda}^{k}=\frac{1}{\lambda}x_{k}$ for $k=1,\cdots, m$, thus one has the following estimate
\begin{equation}\label{2-25}
  |x_{\lambda}^{ m}-x_{\lambda}^{ m-1}|+\cdots+|x_{\lambda}^{2}-x_{\lambda}^{1}|+|x_{\lambda}^{1}|\leq
|x^{ m}-x^{ m-1}|+\cdots+|x^{2}-x^{1}|+|x^{1}|=:M
\end{equation}
holds uniformly for every $\lambda\geq1$.

Since we have \eqref{2-22} and $m$ is even, it follows that
\begin{equation}\label{2-26}
  \widetilde{u}(r)\geq\widetilde{u}(0)\geq(2p)^{\frac{2mp}{(p-1)^{2}}}\left(1+\frac{2n}{p}\right)^{\frac{2m}{p-1}}>0, \,\,\,\,\,\, \forall \,\, r\geq0,
\end{equation}
and hence
\begin{equation}\label{2-28}
\widetilde{u_{\lambda}}(r)\geq\widetilde{u_{\lambda}}(0)=\lambda^{\frac{2m-a}{p-1}}\widetilde{u}(0)
\geq\lambda^{\frac{2m-a}{p-1}}(2p)^{\frac{2mp}{(p-1)^{2}}}\left(1+\frac{2n}{p}\right)^{\frac{2m}{p-1}}>0, \,\,\,\,\,\,\,\,\, \forall \,\, r\geq0.
\end{equation}
For $0\leq a<2m$, by the estimate \eqref{2-28}, we may assume that, we already have
\begin{equation}\label{2-28'}
\widetilde{u}(0)\geq(1+M)^{\frac{a}{p-1}}(2p)^{\frac{2mp}{(p-1)^{2}}}\left(1+\frac{2n}{p}\right)^{\frac{2m}{p-1}},
\end{equation}
or else we may replace $u$ by $u_{\lambda}$ with $\lambda=(1+M)^{\frac{a}{2m-a}}$ (still denoted by $u$).

For any $0\leq r\leq1$, we have
\begin{equation}\label{2-27}
  \widetilde{u}(r)\geq\widetilde{u}(0)\geq l_{0}\,r^{\alpha_{0}},
\end{equation}
where
\begin{equation}\label{2-29}
  l_{0}:=\widetilde{u}(0)\geq\max\left\{(1+M)^{\frac{a}{p-1}},1\right\}(2p)^{\frac{2mp}{(p-1)^{2}}}\alpha_{0}^{\frac{2m}{p-1}}, \,\quad\, \alpha_{0}:=\max\Big\{1,\frac{2n}{p}\Big\}\geq1.
\end{equation}
As a consequence, we infer from \eqref{2-21}, \eqref{2-100}, \eqref{2-25} and \eqref{2-27} that, for any $0\leq r\leq1$,
\begin{align}\label{2-30}
-\Delta\widetilde{u_{ m-1}}(r)&\geq\Big(r+|x^{ m}-x^{ m-1}|+\cdots+|x^{2}-x^{1}|+|x^{1}|\Big)^{-a}\widetilde{u}^{p}(r) \nonumber \\
&\geq\big(1+M\big)^{-a}\,l_{0}^{p}\,r^{\alpha_{0}p}\\
&\geq C_{0}\,l_{0}^{p}\,r^{\alpha_{0}p} \,\,\,\,\,\,\,\,\,\,\,\,\quad \text{if} \,\, 0\leq a<2m,  \nonumber
\end{align}
and
\begin{align}\label{2-30'}
-\Delta\widetilde{u_{ m-1}}(r)&\geq\Big(|x^{ m}-x^{ m-1}|-|x^{ m-1}-x^{ m-2}|-\cdots-|x^{2}-x^{1}|-|x^{1}|-r\Big)^{-a}
\widetilde{u}^{p}(r) \nonumber \\
&\geq l_{0}^{p}\,r^{\alpha_{0}p}\\
&\geq C_{0}\,l_{0}^{p}\,r^{\alpha_{0}p}, \,\,\,\,\,\,\,\,\,\,\,\,\quad \text{if} \,\, -\infty<a<0,  \nonumber
\end{align}
where
\begin{equation}\label{2-99}
  C_{0}:=\min\left\{(1+M)^{-a},1\right\}\in(0,1].
\end{equation}
Integrating both sides of \eqref{2-30} and \eqref{2-30'} from $0$ to $r$ twice and taking into account of \eqref{2-22} yield
\begin{equation}\label{2-31}
  \widetilde{u_{ m-1}}(r)<-\frac{C_{0}l_{0}^{p}}{(\alpha_{0}p+n)(\alpha_{0}p+2)}r^{\alpha_{0}p+2}, \,\,\,\,\,\, \forall \,\, 0\leq r\leq1.
\end{equation}
This implies
\begin{equation}\label{2-32}
  -\frac{1}{r^{n-1}}\left(r^{n-1}\widetilde{u_{ m-2}}\,'(r)\right)'<-\frac{C_{0}l_{0}^{p}}{(\alpha_{0}p+n)(\alpha_{0}p+2)}r^{\alpha_{0}p+2},
\end{equation}
and consequently,
\begin{equation}\label{2-33}
  \widetilde{u_{ m-2}}(r)>\frac{C_{0}l_{0}^{p}}{(\alpha_{0}p+n)(\alpha_{0}p+2)(\alpha_{0}p+n+2)(\alpha_{0}p+4)}r^{\alpha_{0}p+4}, \,\,\,\,\,\, \forall \,\, 0\leq r\leq1.
\end{equation}
Continuing this way, since $m$ is an even integer, by iteration, we can finally arrive at
\begin{equation}\label{2-34}
  \widetilde{u}(r)>\frac{C_{0}l_{0}^{p}}{(\alpha_{0}p+2n)^{2m}}r^{\alpha_{0}p+2m}, \,\,\,\,\,\, \forall \,\, 0\leq r\leq1.
\end{equation}
Now, define
\begin{equation}\label{2-35}
  \alpha_{k+1}:=2\alpha_{k}p\geq\alpha_{k}p+2n \,\,\,\,\,\, \text{and} \,\,\,\,\,\, l_{k+1}:=\frac{C_{0}l_{k}^{p}}{(2\alpha_{k}p)^{2m}}
\end{equation}
for $k=0,1,\cdots$. Then \eqref{2-34} implies
\begin{equation}\label{2-36}
\widetilde{u}(r)>\frac{C_{0}l_{0}^{p}}{(2\alpha_{0}p)^{2m}}r^{2\alpha_{0}p}=l_{1}r^{\alpha_{1}}, \,\,\,\,\, \forall \,\, r\in[0,1].
\end{equation}
Suppose we have $\widetilde{u}(r)\geq l_{k}r^{\alpha_{k}}$, then go through the entire process as above, we can derive $\widetilde{u}(r)\geq l_{k+1}r^{\alpha_{k+1}}$ for any $0\leq r\leq1$. Therefore, one can prove by induction that
\begin{equation}\label{2-37}
  \widetilde{u}(r)\geq l_{k}r^{\alpha_{k}}, \,\,\,\,\,\, \forall \,\, r\in[0,1], \,\,\,\,\,\, \forall \,\, k\in\mathbb{N}.
\end{equation}
Through direct calculations, we have
\begin{eqnarray}\label{2-38}
  l_{k}&=&\frac{C_{0}^{\frac{p^{k}-1}{p-1}}l_{0}^{p^{k}}}{(2p)^{2m(k+(k-1)p+(k-2)p^{2}+\cdots+p^{k-1})}\alpha_{0}^{\frac{2m(p^{k}-1)}{p-1}}} \\
 \nonumber &=& \frac{C_{0}^{\frac{p^{k}-1}{p-1}}l_{0}^{p^{k}}(2p)^{\frac{2mk}{p-1}}}{(2p)^{\frac{2m(p^{k+1}-p)}{(p-1)^{2}}}\alpha_{0}^{\frac{2m(p^{k}-1)}{p-1}}}
\geq (2p)^{\frac{2mk}{p-1}}\left(\frac{C_{0}^{\frac{1}{p-1}}l_{0}}{(2p)^{\frac{2mp}{(p-1)^{2}}}\alpha_{0}^{\frac{2m}{p-1}}}\right)^{p^{k}}
\end{eqnarray}
for $k=0,1,2,\cdots$. From \eqref{2-29}, \eqref{2-99}, \eqref{2-37} and \eqref{2-38}, we deduce that
\begin{equation}\label{2-40}
  \widetilde{u}(1)\geq(2p)^{\frac{2mk}{p-1}}\rightarrow+\infty, \,\,\,\,\,\, \text{as} \,\, k\rightarrow\infty.
\end{equation}
This is absurd. Therefore, \eqref{2-1} must hold, that is, $u_{ m-1}=(-\Delta)^{ m- 1}u\geq0$.

\textbf{\emph{Step 2.}} Next, we will show that all the other $u_{i}$ ($i=1,\cdots, m-2$) must be nonnegative, that is,
\begin{equation}\label{2-41}
u_{ m-i}(x)\geq0, \,\,\,\,\,\,\,\,\,\,\, \forall \,\, i=2,3,\cdots, m-1, \,\,\,\,\,\, \forall \,\, x\in\mathbb{R}^{n}.
\end{equation}
Suppose on the contrary that, there exists some $2\leq i\leq m-1$ and some $x^{0}\in\mathbb{R}^{n}$ such that
\begin{equation}\label{2-42}
  u_{ m-1}(x)\geq0, \,\,\,\,\, u_{ m-2}(x)\geq0, \,\,\,\, \cdots, \,\,\,\, u_{ m-i+1}(x)\geq0, \,\,\,\,\,\, \forall \,\, x\in\mathbb{R}^{n},
\end{equation}
\begin{equation}\label{2-43}
  u_{ m-i}(x^{0})<0.
\end{equation}
Then, repeating the similar ``re-centers and iteration" arguments as in Step 1, after $ m-i+1$ steps of re-centers (denotes the centers by $\bar{x}^{1},\bar{x}^{2},\cdots,\bar{x}^{ m-i+1}$), the signs of the resulting functions $\widetilde{u_{ m-j}}$ ($j=i,\cdots, m-1$) and $\widetilde{u}$ satisfy
\begin{equation}\label{2-44}
  (-1)^{j-i+1}\widetilde{u_{ m-j}}(r)\geq(-1)^{j-i+1}\widetilde{u_{ m-j}}(0)>0, \,\,\,\,\,\,
(-1)^{ m-i+1}\widetilde{u}(r)\geq(-1)^{ m-i+1}\widetilde{u}(0)>0
\end{equation}
for any $r\geq0$. Since $u\geq0$, it follows immediately from \eqref{2-44} that $ m-i+1$ is even and
\begin{equation}\label{2-45}
  \widetilde{u}(r)\geq\widetilde{u}(0)>0, \,\,\,\,\,\, \forall \,\, r\geq0.
\end{equation}
Furthermore, since $ m-i$ is odd, we infer from \eqref{2-44} that
\begin{equation}\label{2-48}
  -\Delta\widetilde{u}(r)=\widetilde{u_{1}}(r)\leq\widetilde{u_{1}}(0)=:-\widetilde{c}<0, \,\,\,\,\,\, \forall \,\, r\geq0,
\end{equation}
and hence, by integrating, one has
\begin{equation}\label{2-49}
  \widetilde{u}(r)\geq\widetilde{u}(0)+\frac{\widetilde{c}}{2n}r^{2}>\frac{\widetilde{c}}{2n}r^{2}, \,\,\,\,\,\, \forall \,\, r\geq0.
\end{equation}
Therefore, if we assume that $u(x)=o(|x|^{2})$ as $|x|\rightarrow+\infty$, we will get a contradiction from \eqref{2-49}.

Or, if we assume that $-\infty<a\leq2+2p$, combining \eqref{2-49} with the estimate \eqref{2-21}, we get that, for $r\geq r_{0}$ sufficiently large,
\begin{eqnarray}\label{2-46}
  -\Delta\widetilde{u_{ m-1}}(r)&\geq&\left(r+|\bar{x}^{ m-i+1}-\bar{x}^{ m-i}|+\cdots+|\bar{x}^{2}-\bar{x}^{1}|+|\bar{x}^{1}|
\right)^{-a}\widetilde{u}^{p}(r) \\
  \nonumber &\geq&\left(\frac{\widetilde{c}}{4n}\right)^{p}r^{2p-a} \quad\quad\quad \text{if} \,\, 0\leq a\leq2+2p,
\end{eqnarray}
and
\begin{eqnarray}\label{2-46'}
  -\Delta\widetilde{u_{ m-1}}(r)&\geq&\left(r-|\bar{x}^{ m-i+1}-\bar{x}^{ m-i}|-\cdots-|\bar{x}^{2}-\bar{x}^{1}|-|\bar{x}^{1}|
\right)^{-a}\widetilde{u}^{p}(r) \\
  \nonumber &\geq&\left(\frac{\widetilde{c}}{4n}\right)^{p}r^{2p-a} \quad\quad\quad \text{if} \,\, -\infty<a<0.
\end{eqnarray}
Now, by a direct integration on \eqref{2-46} and \eqref{2-46'}, we get, if $-\infty<a<2+2p$, then
\begin{equation}\label{2-47}
\widetilde{u_{ m-1}}(r)\leq\widetilde{u_{ m-1}}(r_{0})-\left(\frac{\widetilde{c}}{4n}\right)^{p}
\frac{r^{2+2p-a}-r_{0}^{2+2p-a}}{(n+2p-a)(2+2p-a)}\rightarrow-\infty, \,\,\,\,\,\, \text{as} \,\,\, r\rightarrow\infty;
\end{equation}
if $a=2+2p$, then
\begin{equation}\label{2-47'}
\widetilde{u_{ m-1}}(r)\leq\widetilde{u_{ m-1}}(r_{0})-\left(\frac{\widetilde{c}}{4n}\right)^{p}\frac{\ln r-\ln r_{0}}{n-2}\rightarrow-\infty, \,\,\,\,\,\, \text{as} \,\,\, r\rightarrow\infty.
\end{equation}
This contradicts $u_{ m-1}\geq0$ and thus \eqref{2-41} must hold. This concludes the proof of Theorem \ref{lemma0}.
\end{proof}

\begin{rem}\label{remark4}
For $0<a<2m$, if we consider the nonnegative solutions $u\in C^{2m}(\mathbb{R}^{n}\setminus\{0\})\cap C(\mathbb{R}^{n})$, then it is clear from our proof of Theorem \ref{lemma0} that we can show super poly-harmonic properties except the origin $0\in\mathbb{R}^{n}$, that is, $(-\Delta)^{i}u\geq0$ in $\mathbb{R}^{n}\setminus\{0\}$ for $i=1,\cdots,m-1$.
\end{rem}

\subsection{Equivalance between PDE and IE}
By applying Theorem \ref{lemma0} for $a\geq0$, we can deduce from $-\Delta u\geq0$, $u\geq0$, $u(\bar{x})>0$ and maximum principle that
\begin{equation}\label{2-50}
  u(x)>0, \,\,\,\,\,\,\, \forall \,\, x\in\mathbb{R}^{n}.
\end{equation}
Then, by maximum principle, Lemma 2.1 from Chen and Lin \cite{CLin} and induction, we can also infer further from $(-\Delta)^{i} u\geq0$ ($i=1,\cdots, m-1$), $u>0$ and equation \eqref{PDE} that
\begin{equation}\label{2-51}
  (-\Delta)^{i}u(x)>0, \,\,\,\,\,\,\,\, \forall \,\, i=1,\cdots, m-1, \,\,\,\, \forall \,\, x\in\mathbb{R}^{n}.
\end{equation}

Next, we will show that the positive solution $u$ to \eqref{PDE} also satisfies the following integral equation
\begin{equation}\label{IE}
  u(x)=\int_{\mathbb{R}^{n}}\frac{C}{|x-\xi|^{n-2m}}\cdot\frac{u^{p}(\xi)}{|\xi|^{a}}d\xi.
\end{equation}

Indeed, we have the following theorem on the equivalence between PDE \eqref{PDE} and IE \eqref{IE}.
\begin{thm}\label{equivalence}
Assume $n\geq3$, $1\leq m<\frac{n}{2}$, $0\leq a<2m$ and $1<p<\infty$. Suppose $u$ is nonnegative classical solution to \eqref{PDE}, then it also solves the integral equation \eqref{IE}, and vice versa.
\end{thm}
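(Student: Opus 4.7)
My plan is to prove the two directions separately, with the bulk of the work in the PDE $\Rightarrow$ IE implication.

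\textbf{Direction PDE $\Rightarrow$ IE.} The idea is to peel off one Laplacian at a time using Green's representation on balls $B_R$, relying on the super poly-harmonic properties already proved in Theorem \ref{lemma0} together with the strict positivity in \eqref{2-50}--\eqref{2-51}. Starting from $w_1:=(-\Delta)^{m-1}u\geq0$, which satisfies $-\Delta w_1=u^{p}/|x|^{a}\geq 0$, I would write the Dirichlet Green's representation on $B_R$:
\begin{equation*}
w_1(x)=\int_{B_R}G_R(x,y)\,\frac{u^{p}(y)}{|y|^{a}}\,dy+\int_{\partial B_R}P_R(x,y)\,w_1(y)\,d\sigma(y).
\end{equation*}
Since the Poisson term is nonnegative, dropping it and letting $R\to\infty$ (monotone convergence for $G_R\uparrow c_{n,1}|x-y|^{-(n-2)}$) yields $w_1(x)\geq v_1(x):=\int_{\mathbb{R}^n} c_{n,1}|x-y|^{-(n-2)}\cdot u^{p}(y)/|y|^{a}\,dy$, and this in particular shows the integral is finite. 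Now $-\Delta w_2=w_1\geq v_1$ where $w_2:=(-\Delta)^{m-2}u\geq 0$, and the same Dirichlet-on-$B_R$ argument gives $w_2(x)\geq v_2(x):=\int c_{n,1}|x-y|^{-(n-2)}v_1(y)\,dy$. Iterating $m$ times yields
\begin{equation*}
u(x)\geq v_0(x):=\int_{\mathbb{R}^n}\frac{C_{n,m}}{|x-y|^{n-2m}}\,\frac{u^{p}(y)}{|y|^{a}}\,dy.
\end{equation*}

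\textbf{Forcing equality.} Set $h:=u-v_0\geq 0$. From the iteration, also $(-\Delta)^{j}h\geq 0$ for $j=0,1,\dots,m-1$, while $(-\Delta)^m h=0$. Then $(-\Delta)^{m-1}h$ is a nonnegative harmonic function on $\mathbb{R}^n$, hence a constant $C_{m-1}\geq 0$ by the classical Liouville theorem. If $C_{m-1}>0$, I would consider $\phi:=(-\Delta)^{m-2}h+\frac{C_{m-1}}{2n}|x|^{2}$, observe that $-\Delta\phi=0$ and $\phi\geq 0$, so $\phi$ is a nonnegative constant, forcing $(-\Delta)^{m-2}h\leq 0$ everywhere and contradicting $(-\Delta)^{m-2}h\geq 0$ unless $C_{m-1}=0$. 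Iterating this reasoning downward gives $(-\Delta)^{j}h\equiv 0$ for all $j\geq 1$, so $h\equiv K$ for some constant $K\geq 0$. To rule out $K>0$: if $K>0$ then $u(x)\to K$ as $|x|\to\infty$, and since $0\leq a<2m<n$ the function $u^{p}/|x|^{a}$ behaves like $K^p|x|^{-a}$ at infinity and is not integrable on $\mathbb{R}^n$; this contradicts the already-established finiteness of $v_1(x)$. Hence $K=0$ and $u\equiv v_0$, which is \eqref{IE}.

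\textbf{Direction IE $\Rightarrow$ PDE.} Given that $u$ satisfies \eqref{IE}, I would verify the requisite regularity ($C^{2m}(\mathbb{R}^n)$ when $a\leq 0$, and $C^{2m}(\mathbb{R}^{n}\setminus\{0\})\cap C^{2m-2}(\mathbb{R}^n)$ when $0<a<2m$) via the standard mapping properties of the Riesz potential with kernel $|x-y|^{-(n-2m)}$, noting that $u^p/|y|^a\in L^{1}_{\mathrm{loc}}(\mathbb{R}^n)$ since $a<2m<n$. Applying $(-\Delta)^{m}$ under the integral—justified because $(-\Delta)^{m}_{x}|x-y|^{-(n-2m)}=\delta_{y}$ in the distributional sense and $u$ is a classical $C^{2m}$ (off the origin) function—recovers $(-\Delta)^{m}u(x)=u^{p}(x)/|x|^{a}$ pointwise.

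The main obstacle I anticipate is the cascade Liouville argument after showing $u\geq v_0$: carefully ruling out the possibility that $u-v_0$ is a nontrivial polyharmonic polynomial. The combination of the quadratic-correction trick with the global integrability contradiction for $K>0$ is what makes this step go through in our range $0\leq a<2m$; this is precisely the kind of subtlety that Remark \ref{remark1} warns was mishandled in earlier references, so I would take care to ensure the constant-ruling-out step is rigorous at every level of the iteration.
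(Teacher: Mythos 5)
Your route is genuinely different from the paper's and is, in outline, viable. The paper works directly with the Green function $\phi_{r}$ of $(-\Delta)^{m}$ under Navier conditions on balls $B_{r}(x)$, proves sign and monotonicity properties of $(-\Delta)^{i}\phi_{r}$, integrates by parts to obtain $\int_{B_{r}(x)}\phi_{r}(x-\xi)\,u^{p}(\xi)|\xi|^{-a}d\xi\leq u(x)$, and then gets \emph{equality} by showing the boundary terms vanish along a sequence $r_{k}\to\infty$ (finiteness of the potentials plus Jensen's inequality); there is no remainder term to analyze. You instead peel off one Laplacian at a time with second-order Dirichlet Green functions, obtain $u\geq v_{0}$ (the $2m$-Riesz potential of $u^{p}/|x|^{a}$), and kill the polyharmonic remainder $h=u-v_{0}$ by a cascade of Liouville arguments with the quadratic-correction trick. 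The cascade itself is sound: $(-\Delta)^{m-1}h$ is nonnegative harmonic, hence a constant, and a positive constant is incompatible with $(-\Delta)^{m-2}h\geq0$, and so on down to $h\equiv K\geq0$. Your approach buys a more elementary toolkit (only the Newtonian Green function and the classical Liouville theorem), at the price of the remainder analysis; the paper's buys a direct limiting identity at the cost of constructing and sign-analyzing the $m$-th order Navier Green function and handling the boundary terms along a subsequence. This kind of ``potential plus polyharmonic remainder'' argument is standard in the literature the paper cites, so the strategy is legitimate.

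Two concrete points need repair. First, your exclusion of $K>0$ is incorrect as stated: non-integrability of $u^{p}/|x|^{a}$ over $\mathbb{R}^{n}$ does not contradict finiteness of $v_{1}(x)=\int_{\mathbb{R}^{n}}|x-y|^{-(n-2)}u^{p}(y)|y|^{-a}dy$, because the kernel decays at infinity; indeed for $2<a<2m$ (a nonempty part of the theorem's range when $m\geq2$) one has $\int_{|y|\geq1}|x-y|^{-(n-2)}|y|^{-a}dy<\infty$, so $u\geq K>0$ yields no contradiction at the level of $v_{1}$. The fix is immediate and stays inside your framework: argue at the level of the $2m$-order potential. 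If $h\equiv K>0$ then $u\geq K$ everywhere, whence $v_{0}(x)\geq CK^{p}\int_{|y|\geq1}|x-y|^{-(n-2m)}|y|^{-a}dy=+\infty$ because $(n-2m)+a<n$, contradicting $v_{0}(x)\leq u(x)<\infty$; this covers the full range $0\leq a<2m$. Second, for $0<a<2m$ the solution is only $C^{2m-2}$ across the origin, so before invoking the Green representation of $w_{1}=(-\Delta)^{m-1}u$ on balls containing $0$ (and before asserting $(-\Delta)^{m}h=0$ in the distributional sense on all of $\mathbb{R}^{n}$) you must rule out a Dirac mass at the origin in the distributional identity; the paper does exactly this via the B\^{o}cher-type Lemma 1 of \cite{LiC} (see \eqref{bocher}), using continuity at $0$. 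With these two repairs, and a word on why $(-\Delta)^{j}v_{0}$ equals the corresponding iterated Newtonian potential (semigroup property of the Riesz kernels plus Fubini for nonnegative integrands), your proof goes through.
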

\begin{proof}
Let ${\delta}(x-{\xi})$ be the Dirac Delta function and $\phi_{r}(x-{\xi})$ be the solution of the following equation
\begin{equation}\begin{cases}\label{2e1}
(-\Delta)^{m}\phi_{r}(x-{\xi})={\delta}(x-{\xi}),\ \ \ \ {\xi}\in B_{r}(x),\\
\phi_{r}(x-{\xi})=(-\Delta)\phi_{r}(x-{\xi})=...=(-\Delta)^{m-1}\phi_{r}(x-{\xi}), \,\,\,\,\, {\xi}\in {\partial}B_{r}(x).
\end{cases}\end{equation}
One can easily verify that, $(-\Delta)^{i}\phi_{r}(x-{\xi})$ must take the following form
\begin{equation}\label{2e2}
  (-\Delta)^{i}\phi_{r}(x-{\xi})=\frac{c_{i}}{|x-{\xi}|^{n+2i-2m}}+\sum_{k=1}^{m-i}c_{i,k}\frac{|x-{\xi}|^{2m-2i-2k}}{{r}^{n-2k}}
\end{equation}
for $i=0,1,\cdots,m-1$, where the coefficients satisfy $c_{i}+\sum_{k=1}^{m-i}c_{i,k}=0$ ($i=0,1,\cdots,m-1$). In particular, when $i=m-1$, by \eqref{2e2}, we have
\begin{equation}\label{2e3}
  (-\Delta)^{m-1}\phi_{r}(x-{\xi})=\frac{C_{m-1}}{|x-{\xi}|^{n-2}}-\frac{C_{m-1}}{{r}^{n-2}}, \,\,\,\,\,\,\, {\xi}\in \overline{B_{r}(x)},
\end{equation}
and hence
\begin{equation}\label{2e4}
\frac{\partial\left[(-\Delta)^{m-1}\phi_{r}(x-{\xi})\right]}{\partial v_{\xi}}\leqslant 0, \,\,\,\,\,\,\,\, {\xi}\in {\partial}B_{r}(x),
\end{equation}
where $v_{\xi}$ denotes the unit outer normal vector at $\xi\in\partial B_{r}(x)$. Next we define function $f(\xi)$ by
\begin{equation}\label{2e5}
(-\Delta)^{m-1}\phi_{r}(x-{\xi})=\frac{C_{m-1}}{|x-{\xi}|^{n-2}}-\frac{C_{m-1}}{{r}^{n-2}}=:f({\xi})\geqslant 0.
\end{equation}
It is obvious that $f\in {L}^{1}(B_{r}(x))$, thus $(-\Delta)^{m-2}\phi_{r}(x-{\xi})$ is super-harmonic in the sense of distribution in ${B_{r}(x)}$, and hence we derive
\begin{equation}\label{2e6}
\inf_{\xi\in B_{r}(x)}(-\Delta)^{m-2}\phi_{r}(x-{\xi})\geq\inf_{\xi\in\partial B_{r}(x)}(-\Delta)^{m-2}\phi_{r}(x-{\xi})=0
\end{equation}
and
\begin{equation}\label{2e7}
\frac{\partial{(-\Delta)^{m-2}\phi_{r}(x-{\xi})} }{\partial {v}_{\xi}}\leqslant 0, \,\,\,\,\,\,\,\, {\xi}\in {\partial}B_{r}(x).
\end{equation}
Continuing this way, we conclude that, for $i=0,1,2...m-1$,
\begin{equation}\label{2e8}
\inf_{\xi\in B_{r}(x)}(-\Delta)^{i}\phi_{r}(x-{\xi})\geq\inf_{\xi\in\partial B_{r}(x)}(-\Delta)^{i}\phi_{r}(x-{\xi})=0
\end{equation}
and
\begin{equation}\label{2e9}
\frac{\partial\left[(-\Delta)^{i}\phi_{r}(x-{\xi})\right]}{\partial {v}_{\xi}}\leqslant 0, \,\,\,\,\,\,\,\, {\xi}\in {\partial}B_{r}(x).
\end{equation}

From \eqref{2e3}, we can get $(-\Delta)^{m-1}\phi_{r}(x-{\xi})$ monotone increases about $r$ and tends to ${\frac{C_{m-1}}{|x-{\xi}|^{n-2}}}$ as $r\rightarrow+\infty$. As a consequence, we arrive at, for any ${r}_{2}>{r}_{1}>0$,
\begin{equation}\label{2e10}
(-\Delta)[(-\Delta)^{m-2}\phi_{{r}_{2}}(x-{\xi})-(-\Delta)^{m-2}\phi_{{r}_{1}}(x-{\xi})]\geq0,\, \, \, \, {\xi}\in B_{r_{1}}(x),
\end{equation}
and
\begin{equation}\label{2e11}
0=(-\Delta)^{m-2}\phi_{{r}_{1}}(x-{\xi})\leq(-\Delta)^{m-2}\phi_{{r}_{2}}(x-{\xi}),\, \, \,\, {\xi}\in {\partial}B_{r_{1}}(x).
\end{equation}
By maximum principle, we deduce that
\begin{equation}\label{2e12}
(-\Delta)^{m-2}\phi_{{r}_{2}}(x-{\xi})\geq(-\Delta)^{m-2}\phi_{{r}_{1}}(x-{\xi}), \,\,\,\,\,\, \forall \,\, \xi\in\mathbb{R}^{n}.
\end{equation}
So $(-\Delta)^{m-2}\phi_{r}(x-{\xi})$ also monotone increases about $r$ and tends to ${\frac{C_{m-2}}{|x-{\xi}|^{n-4}}}$ as $r\rightarrow+\infty$. Continuing this way, we can derive
\begin{equation}\label{2e13}
  (-\Delta)^{i}\phi_{r}(x-{\xi})\,\,\big\uparrow\,\,\frac{C_{i}}{|x-{\xi}|^{n-2m+2i}}, \,\,\,\,\,\,\, \text{as} \,\, r\rightarrow+\infty.
\end{equation}

By Lemma 1 in \cite{LiC} and equation \eqref{PDE}, we have $(-\Delta)^{m-1}u$ solves the following equation
\begin{equation}\label{bocher}
  (-\Delta)^{m}u=\frac{u^{p}(x)}{|x|^{a}}+m\delta(0) \quad\quad \text{in} \,\,\, B_{\rho}(0)
\end{equation}
in the sense of distributions for arbitrary $\rho>0$, where $m\geq0$ and $\delta(0)$ is the Delta distribution concentrated at the origin. Since $u\in C(\mathbb{R}^{n})$, it follows that $m=0$. Therefore, multiplying both sides of \eqref{bocher} by $\phi_{r}(x-{\xi})$ and integrating by parts on $B_{r}(x)$, by Theorem \ref{lemma0} and \eqref{2e9}, one has
\begin{eqnarray}\label{2e14}
&&\int_{B_{r}(x)}{\phi_{r}(x-{\xi})}\frac{u^{p}(\xi)}{|\xi|^{a}}d{\xi} \\
\nonumber &=& u(x)+\sum_{i=0}^{m-1}\int_{{\partial}B_{r}(x)}(-\Delta)^{i}u(\xi)\cdot\frac{\partial\left[(-\Delta)^{m-i-1}\phi_{r}(x-{\xi})\right]}{\partial {v}_{\xi}}d{\sigma}
\\
\nonumber &{\leqslant}& u(x)
\end{eqnarray}
for any $x\neq0$. At the same time, multiplying $(-\Delta)^{i}u$ by $(-\Delta)^{m-i}\phi_{r}(x-{\xi})$ ($i=1,\cdots,m-1$) and integrating by parts on $B_{r}(x)$, by Theorem \ref{lemma0} and \eqref{2e9}, one also has
\begin{eqnarray}\label{2e15}
&& \int_{B_{r}(x)}{(-\Delta)^{m-i}\phi_{r}(x-{\xi})}\cdot(-\Delta)^{i}u({\xi})d{\xi} \\
\nonumber &=& u(x)+\sum_{j=0}^{i-1}\int_{{\partial}B_{r}(x)}(-\Delta)^{j}u(\xi)\cdot\frac{\partial\left[(-\Delta)^{m-j-1}\phi_{r}(x-{\xi})\right]}{\partial {v}_{\xi}}d{\sigma}
\\
\nonumber &{\leqslant}& u(x).
\end{eqnarray}
Thus, by letting $r\rightarrow+\infty$ in \eqref{2e14}, \eqref{2e15} and using Levi's monotone convergence theorem, we obtain
\begin{equation}\label{2e16}
\int_{\mathbb{R}^{n}}\frac{1}{|x-\xi|^{n-2m}}\cdot\frac{u^{p}(\xi)}{|\xi|^{a}}d{\xi}<\infty
\end{equation}
and
\begin{equation}\label{2e17}
\int_{\mathbb{R}^{n}}\frac{(-\Delta)^{i}u(\xi)}{|x-{\xi}|^{n-2i}}<\infty
\end{equation}
for $i=1,\cdots,m-1$. Therefore, there exists a sequence $\{{r}_{k}\}$ such that, as ${r}_{k}\rightarrow\infty$,
\begin{equation}\label{2e18}
\frac{1}{{{r}_{k}}^{n-2m-1}}\int_{{\partial}B_{{r}_{k}}(x)}\frac{u^{p}(\xi)}{|\xi|^{a}}d{\sigma}\rightarrow0,
\end{equation}
and
\begin{equation}\label{2e19}
\frac{1}{{{r}_{k}}^{n-2i-1}}\int_{{\partial}B_{{r}_{k}}(x)}(-\Delta)^{i}{u}(\xi)d{\sigma}\rightarrow0\,\,\,\,\,\,\,\, \text{for} \,\, i=1,2,\cdots,m-1.
\end{equation}
From \eqref{2e18}, it follows that, as $r_{k}\rightarrow+\infty$,
\begin{equation}\label{2e20}
\frac{1}{{{r}_{k}}^{n-2m-1+a}}\int_{{\partial}B_{{r}_{k}}(x)}{{u}^{p}(\xi)}d{\sigma}=\frac{1}{{{r}_{k}}^{n-1-(2m-a)}}\int_{{\partial}B_{{r}_{k}}(x)}{{u}^{p}(\xi)}d{\sigma}\rightarrow0
\end{equation}
Then, by Jensen's inequality, we have
\begin{equation}\label{2e21}
\left({\frac{1}{{{r}_{k}}^{n-1-(2m-a)}}\int_{{\partial}B_{{r}_{k}}(x)}{{u}^{p}(\xi)}}d{\sigma}\right)^{\frac{1}{p}}\frac{1}{{{r}_{k}}^{\frac{2m-a}{p}}}
\geq\frac{1}{{{r}_{k}}^{n-1}}\int_{{\partial}B_{{r}_{k}}(x)}{{u}(\xi)}d{\sigma},
\end{equation}
and hence
\begin{equation}\label{2e22}
\frac{1}{{{r}_{k}}^{n-1}}\int_{{\partial}B_{{r}_{k}}(x)}{{u}(\xi)}d{\sigma}\rightarrow0.
\end{equation}
Combining this with \eqref{2e2} and \eqref{2e19} implies
\begin{equation}\label{2e23}
\sum_{i=0}^{m-1}\int_{{\partial}B_{{r}_{k}}(x)}(-\Delta)^{i}u(\xi)\cdot\frac{\partial\left[(-\Delta)^{m-i-1}\phi_{r_{k}}(x-{\xi})\right]}{\partial {v}_{\xi}}d{\sigma}\rightarrow0,
\end{equation}
inserting \eqref{2e23} into \eqref{2e14} and letting $r_{k}\rightarrow+\infty$, we derive immediately
\begin{equation}\label{2e24}
  u(x)=\int_{\mathbb{R}^{n}}\frac{C}{|x-{\xi}|^{n-2m}}\cdot\frac{u^{p}(\xi)}{|\xi|^{a}}d\xi,
\end{equation}
that is, $u$ satisfies the integral equation \eqref{IE}.

Conversely, assume that $u$ is a nonnegative classical solution of integral equation \eqref{IE}, then
\begin{eqnarray}\label{2e25}
(-\Delta)^{m}u(x)
\nonumber &=& \int_{\mathbb{R}^{n}}{\left[(-\Delta)^{m}\left(\frac{C}{|x-\xi|^{n-2m}}\right)\right]}\frac{u^{p}(\xi)}{|\xi|^{a}}d{\xi}
\\
\nonumber &=& \int_{\mathbb{R}^{n}}\delta(x-\xi)\frac{u^{p}(\xi)}{|\xi|^{a}}d{\xi}=\frac{u^{p}(x)}{|x|^{a}},
\end{eqnarray}
that is, $u$ also solves the PDE \eqref{PDE}. This completes the proof of equivalence between PDE \eqref{PDE} and IE \eqref{IE}.
\end{proof}

For $2\leq a<2m$ and $1<p<\infty$, one can easily observe that the regularity at $0$ of $u$ indicated by the integral equation \eqref{IE} contradicts with $u\in C^{2m-2}(\mathbb{R}^{n})$, thus we must have $u\equiv0$ in $\mathbb{R}^{n}$.

In the following, we will also obtain a contradiction for $1<p<\frac{n+2m-2a}{n-2m}$ and $0\leq a<2m$ by applying the method of moving planes and Pohozaev identity to the equivalent integral equation \eqref{IE} (see subsection 2.3 and 2.4). The proof still works for $u\in C^{2m}(\mathbb{R}^{n}\setminus\{0\})\cap C(\mathbb{R}^{n})$.

\subsection{Radial symmetry of positive solution}
From Theorem \ref{equivalence}, we know that the positive classical solution $u$ to PDE \eqref{PDE} is also a positive solution to the equivalent integral equation \eqref{IE}.

If $u$ is a nonnegative solution to IE \eqref{IE}, we must have either $u\equiv0$ or $u>0$ in $\mathbb{R}^{n}$. The next Theorem says that all the locally integrable positive solutions to IE \eqref{IE} must be radially symmetric and monotone decreasing about the origin.
\begin{thm}\label{symmetry}
Assume $n\geq3$, $1\leq m<\frac{n}{2}$, $0\leq a<2m$ and $1<p<\frac{n+2m-a}{n-2m}$. Suppose $u$ is a positive solution to IE \eqref{IE} satisfying $\frac{u^{p-1}}{|x|^{a}}\in L^{\frac{n}{2m}}_{loc}(\mathbb{R}^{n})$, then $u$ is radially symmetric and monotone decreasing about the origin.
\end{thm}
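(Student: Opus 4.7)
The plan is to apply the direct method of moving planes in integral form (in the spirit of Chen--Li--Ou \cite{CLO}), exploiting the fact that the Hardy weight $|x|^{-a}$ is invariant under rotations about the origin but not under translations. Fix a direction $e_{1}$, and for $\lambda\leq 0$ set $\Sigma_{\lambda}:=\{x\in\mathbb{R}^{n}:x_{1}<\lambda\}$, $x^{\lambda}:=(2\lambda-x_{1},x_{2},\ldots,x_{n})$, and $u_{\lambda}(x):=u(x^{\lambda})$. The crucial geometric observation is that for $\lambda<0$ and $\xi\in\Sigma_{\lambda}$ one has $|\xi|>|\xi^{\lambda}|$, whence $|\xi|^{-a}\leq|\xi^{\lambda}|^{-a}$. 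Splitting \eqref{IE} over $\Sigma_{\lambda}$ and its reflection yields
\[
u(x)-u_{\lambda}(x)=\int_{\Sigma_{\lambda}}\left(\frac{C}{|x-\xi|^{n-2m}}-\frac{C}{|x-\xi^{\lambda}|^{n-2m}}\right)\left[\frac{u^{p}(\xi)}{|\xi|^{a}}-\frac{u_{\lambda}^{p}(\xi)}{|\xi^{\lambda}|^{a}}\right]d\xi,
\]
in which the first bracket is strictly positive for $x,\xi\in\Sigma_{\lambda}$. On the bad set $\Sigma_{\lambda}^{-}:=\{\xi\in\Sigma_{\lambda}:u(\xi)>u_{\lambda}(\xi)\}$, the mean value inequality combined with the weight comparison gives
\[
\frac{u^{p}(\xi)}{|\xi|^{a}}-\frac{u_{\lambda}^{p}(\xi)}{|\xi^{\lambda}|^{a}}\leq\frac{p\,u^{p-1}(\xi)}{|\xi|^{a}}\,(u-u_{\lambda})(\xi),
\]
while off $\Sigma_{\lambda}^{-}$ the analogous bracket is nonpositive, so for every $x\in\Sigma_{\lambda}^{-}$,
\[
0<u(x)-u_{\lambda}(x)\leq C\int_{\Sigma_{\lambda}^{-}}\frac{1}{|x-\xi|^{n-2m}}\cdot\frac{u^{p-1}(\xi)}{|\xi|^{a}}\,(u-u_{\lambda})(\xi)\,d\xi.
\]

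Applying the Hardy--Littlewood--Sobolev inequality followed by H\"older (with the exponent $\frac{n}{2m}$ placed on the coefficient $u^{p-1}/|\cdot|^{a}$), one obtains, for a suitable $q>1$ in the HLS range,
\[
\|u-u_{\lambda}\|_{L^{q}(\Sigma_{\lambda}^{-})}\leq C_{n,m}\left\|\frac{u^{p-1}}{|\cdot|^{a}}\right\|_{L^{n/(2m)}(\Sigma_{\lambda}^{-})}\|u-u_{\lambda}\|_{L^{q}(\Sigma_{\lambda}^{-})}.
\]
The local integrability hypothesis $u^{p-1}/|x|^{a}\in L^{n/(2m)}_{loc}$, combined with the decay of $u$ at infinity extractable from \eqref{IE} within the subcritical range $p<\frac{n+2m-a}{n-2m}$, guarantees that the coefficient drops strictly below $1/C_{n,m}$ once $\lambda$ is sufficiently negative. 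This forces $|\Sigma_{\lambda}^{-}|=0$ and hence $u_{\lambda}\geq u$ on $\Sigma_{\lambda}$, providing the starting position for moving planes.

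Set $\lambda_{0}:=\sup\{\lambda\leq 0:u_{\mu}\geq u\text{ on }\Sigma_{\mu}\text{ for every }\mu\leq\lambda\}$; I claim $\lambda_{0}=0$. If not, then $\lambda_{0}<0$. Because the weight inequality $|\xi|^{-a}<|\xi^{\lambda_{0}}|^{-a}$ is strict on $\Sigma_{\lambda_{0}}$ away from $T_{\lambda_{0}}$, the identity $u_{\lambda_{0}}\equiv u$ on $\Sigma_{\lambda_{0}}$ would force $u\equiv 0$ via the comparison identity, which is excluded; hence $u_{\lambda_{0}}>u$ strictly on $\Sigma_{\lambda_{0}}$. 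Reapplying the HLS--H\"older smallness argument on $\Sigma_{\lambda_{0}+\eps}^{-}$ for small $\eps>0$, using absolute continuity of the $L^{n/(2m)}$ integral and $|\Sigma_{\lambda_{0}+\eps}^{-}|\to 0$ as $\eps\to 0^{+}$, yields $u_{\lambda_{0}+\eps}\geq u$ on $\Sigma_{\lambda_{0}+\eps}$, contradicting maximality. Therefore $\lambda_{0}=0$, so $u(x_{1},x')\leq u(-x_{1},x')$ for $x_{1}<0$; running the identical argument in direction $-e_{1}$ gives the reverse inequality, so $u$ is symmetric across $\{x_{1}=0\}$. Arbitrariness of $e_{1}$ produces radial symmetry about the origin, and the inequality $u_{\lambda}\geq u$ on $\Sigma_{\lambda}$ for every $\lambda<0$ delivers monotone decrease in $|x|$.

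The main obstacle will be the transition step at $\lambda_{0}<0$: one must verify that $|\Sigma_{\lambda_{0}+\eps}^{-}|\to 0$ as $\eps\to 0^{+}$ so the local $L^{n/(2m)}$ hypothesis makes the coefficient small. This relies on upgrading the weak inequality $u_{\lambda_{0}}\geq u$ to the strict pointwise inequality $u_{\lambda_{0}}>u$ on $\Sigma_{\lambda_{0}}$, which is precisely where the Hardy weight $|x|^{-a}$ is decisive: it rules out reflection symmetry about hyperplanes other than those through the origin, pinning the axis of symmetry at $0$.
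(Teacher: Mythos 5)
There is a genuine gap, and it sits exactly where your plan leans on the hypotheses. You run the moving planes directly on $u$ with the weight $|x|^{-a}$, and your absorption step needs two things that the assumptions do not provide: (1) smallness of $\bigl\|u^{p-1}/|\cdot|^{a}\bigr\|_{L^{n/2m}(\Sigma_{\lambda}^{-})}$ for $\lambda$ very negative, where $\Sigma_{\lambda}^{-}\subset\{x_{1}<\lambda\}$ is a neighborhood of infinity, and (2) finiteness of $\|u-u_{\lambda}\|_{L^{q}(\Sigma_{\lambda}^{-})}$ so that the inequality $\|w\|\leq\frac{1}{2}\|w\|$ actually forces $w=0$. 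The hypothesis is only $u^{p-1}/|x|^{a}\in L^{n/2m}_{loc}$, which controls the origin, not infinity; and the "decay of $u$ at infinity extractable from \eqref{IE}" that you invoke is not available at this stage — in the paper such a decay bound (estimate \eqref{P10}) is derived only \emph{after} radial monotonicity is known, using it. So neither the starting position nor the absorption argument closes as written. A second, independent problem is the endpoint $a=0$, which the theorem includes: your mechanism for pinning the symmetry center at the origin is the strict comparison $|\xi|^{-a}<|\xi^{\lambda_{0}}|^{-a}$, which is vacuous when $a=0$, so your argument could at best give symmetry about some hyperplane, not through $0$. Note also that the subcritical restriction $p<\frac{n+2m-a}{n-2m}$ plays no essential role in your scheme, which is a warning sign, since at criticality (with $a=0$) there are bubble solutions centered at arbitrary points.

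The paper's proof resolves all of these with one device you skipped: the Kelvin transform centered at the origin. Setting $\bar{u}(x)=|x|^{2m-n}u(x/|x|^{2})$, one gets the integral equation \eqref{s3} with the new weight $|y|^{-\tau}$, $\tau=n+2m-a-p(n-2m)>0$ precisely because $p<\frac{n+2m-a}{n-2m}$. Inversion converts your missing ingredients into available ones: the computation \eqref{s9} shows $\int_{|x|\geq r}\bigl(\bar{u}^{p-1}/|x|^{\tau}\bigr)^{n/2m}dx=\int_{|x|\leq 1/r}u^{(p-1)n/2m}/|x|^{an/2m}\,dx<\infty$ by the local hypothesis, so the coefficient is small near infinity and the plane can start; $\bar{u}$ decays like $|x|^{2m-n}$, so the $L^{q}$ norms in the HLS/absorption step are finite; and at the limiting position $\lambda_{0}<0$ the strict inequality $|y^{\lambda_{0}}|^{-\tau}>|y|^{-\tau}$ (with $\tau>0$ even when $a=0$) rules out $\omega_{\lambda_{0}}\equiv 0$ as in \eqref{s26}, forcing $\lambda_{0}=0$ uniformly in $a\in[0,2m)$. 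If you want to salvage a direct-on-$u$ argument you would first have to establish global integrability and decay of $u$ at infinity from \eqref{IE} alone, which is essentially as hard as the theorem; the Kelvin transform is the standard and, here, necessary shortcut.
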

\begin{proof}
We define the Kelvin transform of $u$ by
\begin{equation}\label{s1}
  \bar{u}(x)=\frac{1}{|x|^{n-2m}}u\left(\frac{x}{|x|^{2}}\right), \,\,\,\,\,\,\,\, x\neq0.
\end{equation}
Since $u$ satisfies the integral equation
\begin{equation}\label{s2}
  u(x)=C\int_{\mathbb{R}^{n}}\frac{{u^{p}({y}})}{|x-y|^{n-2m}|y|^{a}}d{y},
\end{equation}
it follows that, for $x\neq0$,
\begin{eqnarray}\label{s3}
\bar{u}(x)&=&\frac{C}{|x|^{n-2m}}\int_{\mathbb{R}^{n}}\frac{{u^{p}({y}})}{\left|\frac{x}{|x|^{2}}-y\right|^{n-2m}|y|^{a}}d{y}
\\
\nonumber &=& \frac{C}{|x|^{n-2m}}\int_{\mathbb{R}^{n}}\frac{u^{p}\left(\frac{y}{|y|^{2}}\right)}{\left|\frac{x}{|x|^{2}}-\frac{y}{|y|^{2}}\right|^{n-2m}\frac{1}{|y|^{a}}}
\cdot\frac{1}{|y|^{2n}}d{y}
\\
\nonumber &=& \frac{C}{|x|^{n-2m}}\int_{\mathbb{R}^{n}}\frac{|x|^{n-2m}|y|^{n-2m}}{|x-y|^{n-2m}}\cdot\frac{u^{p}\left(\frac{y}{|y|^{2}}\right)}{|y|^{2n-a}}dy
\\
\nonumber &=& C\int_{\mathbb{R}^{n}}\frac{1}{|x-y|^{n-2m}}\cdot\frac{u^{p}\left(\frac{y}{|y|^{2}}\right)}{|y|^{n+2m-a}}dy\\
\nonumber &=& C\int_{\mathbb{R}^{n}}\frac{1}{|x-y|^{n-2m}}\cdot\frac{\overline{u}^{p}(y)}{|y|^{\tau}}dy,
\end{eqnarray}
where $\tau:=n+2m-a-p(n-2m)>0$.

We will apply the method of moving planes in integral forms to the integral equation \eqref{s3} and carry out the process of moving plane in the $x_{1}$ direction. For this purpose, we need some definitions.

Let $\lambda\leq0$ be an arbitrary non-positive real number and let the moving plane be
\begin{equation}\label{D1}
  T_{\lambda}:=\{x\in\mathbb{R}^{n}:\,x_{1}=\lambda\}.
\end{equation}
We denote
\begin{equation}\label{D2}
  \Sigma_{\lambda}:=\{x=(x_{1},x_{2},\cdots,x_{n})\in\mathbb{R}^{n}:\,x_{1}<\lambda\},
\end{equation}
and let
\begin{equation}\label{D3}
  x^{\lambda}:=(2\lambda-x_{1},x_{2},\cdots,x_{n})
\end{equation}
be the reflection of $x$ about the plane $T_{\lambda}$, and define
\begin{equation}\label{D4}
  \bar{u}_{\lambda}(x):=\bar{u}(x^{\lambda}), \,\,\,\,\,\,\,\, \omega_{\lambda}(x):=\bar{u}_{\lambda}(x)-\bar{u}(x).
\end{equation}

By properly exploiting some global properties of the integral equations, we will show that, for $\lambda$ sufficiently negative,
\begin{equation}\label{s4}
  \omega_{\lambda}(x)\geq0, \,\,\,\,\,\,\,\,\,\, \forall \,\, x\in\Sigma_{\lambda}\setminus\{0^{\lambda}\}.
\end{equation}
Then, we start moving the plane $T_{\lambda}$ from near $x_{1}=-\infty$ to the right as long as \eqref{s4} holds, until its limiting position and finally derive symmetry and monotonicity. Therefore, the moving plane process can be divided into two steps.

\emph{Step 1. Start moving the plane from near $x_{1}=-\infty$.} Define the set
\begin{equation}\label{s5}
  \Sigma^{-}_{\lambda}:=\{x\in\Sigma_{\lambda}\setminus\{0^{\lambda}\} \, | \, \omega_{\lambda}(x)<0\}.
\end{equation}
We can deduce from (2.3) that, for $x\in{\Sigma_{\lambda}}\setminus\{0^{\lambda}\}$,
\begin{eqnarray}\label{s6}
&&\omega_{\lambda}(x)=\bar{u}_{\lambda}(x)-\bar{u}(x)\\
\nonumber &=& C\int_{\mathbb{R}^{n}}\frac{1}{|x^{\lambda}-y|^{n-2m}}\frac{\bar{u}^{p}(y)}{|y|^{\tau}}dy-C\int_{\mathbb{R}^{n}}\frac{1}{|x-y|^{n-2m}}\frac{\bar{u}^{p}(y)}{|y|^{\tau}}dy
\\
\nonumber &=&C\int_{\Sigma_{\lambda}}\frac{1}{|x^{\lambda}-y|^{n-2m}}\frac{\bar{u}^{p}(y)}{|y|^{\tau}}dy+C\int_{\Sigma_{\lambda}}\frac{1}{|x^{\lambda}-y^{\lambda}|^{n-2m}}\frac{\bar{u}^{p}(y^{\lambda})}{|y^{\lambda}|^{\tau}}dy
\\
\nonumber & & -C\int_{\Sigma_{\lambda}}\frac{1}{|x-y|^{n-2m}}\frac{\bar{u}^{p}(y)}{|y|^{\tau}}dy-C\int_{\Sigma_{\lambda}}\frac{1}{|x-y^{\lambda}|^{n-2m}}\frac{\bar{u}^{p}(y^{\lambda})}{|y^{\lambda}|^{\tau}}dy
\\
\nonumber &=& C\int_{\Sigma_{\lambda}}\left(\frac{1}{|x-y|^{n-2m}}-\frac{1}{|x-y^{\lambda}|^{n-2m}}\right)\left(\frac{\bar{u}^{p}(y^{\lambda})}{|y^{\lambda}|^{\tau}}-\frac{\bar{u}^{p}(y)}{|y|^{\tau}}\right)dy\\
\nonumber &\geq& C\int_{\Sigma_{\lambda}}\left(\frac{1}{|x-y|^{n-2m}}-\frac{1}{|x-y^{\lambda}|^{n-2m}}\right)\frac{\bar{u}^{p}(y^{\lambda})-\bar{u}^{p}(y)}{|y|^{\tau}}dy\\
\nonumber &\geq& C\int_{\Sigma^{-}_{\lambda}}\frac{p\bar{u}^{p-1}(y)}{|x-y|^{n-2m}}\cdot\frac{\omega_{\lambda}(y)}{|y|^{\tau}}dy
\end{eqnarray}
In particular, for $x\in{\Sigma^{-}_{\lambda}}$, we have
\begin{equation}\label{s7}
  0>\omega_{\lambda}(x)\geq{C}\int_{\Sigma^{-}_{\lambda}}\frac{p\bar{u}^{p-1}(y)}{|x-y|^{n-2m}}\cdot\frac{\omega_{\lambda}(y)}{|y|^{\tau}}dy.
\end{equation}

By Hardy-Littlewood-Sobolev inequality, one gets, for arbitrary $\frac{n}{n-2m}<q<\infty$,
\begin{eqnarray}\label{s8}
\|\omega_{\lambda}\|_{{L^{q}(\Sigma^{-}_{\lambda})}}&\leq& C\left\|\int_{\Sigma^{-}_{\lambda}}\frac{p\bar{u}^{p-1}(y)}{|x-y|^{n-2m}}\cdot\frac{\omega_{\lambda}(y)}{|y|^{\tau}}dy\right\|_{{L^{q}(\Sigma^{-}_{\lambda})}}
\\
\nonumber &\leq&C\left\|\frac{\bar{u}^{p-1}(x)}{|x|^{\tau}}\cdot\omega_{\lambda}(x)\right\|_{{L^{\frac{nq}{n+2mq}}(\Sigma^{-}_{\lambda})}}
\\
\nonumber &\leq& C\left\|\frac{\bar{u}^{p-1}}{|x|^{\tau}}\right\|_{L^{\frac{n}{2m}}(\Sigma^{-}_{\lambda})}\cdot\|\omega_{\lambda}\|_{{L^{q}(\Sigma^{-}_{\lambda})}}
\end{eqnarray}
Since $\frac{u^{p-1}}{|x|^{a}}\in L^{\frac{n}{2m}}_{loc}(\mathbb{R}^{n})$, we have, for any $r>0$,
\begin{eqnarray}\label{s9}
\int_{|x|\geq{r}}\frac{\bar{u}^{(p-1)\frac{n}{2m}}(x)}{|x|^{\tau\frac{n}{2m}}}dx&=&
\int_{|x|\geq{r}}\frac{1}{|x|^{(\tau+(p-1)(n-2m))\frac{n}{2m}}}u^{(p-1)\frac{n}{2m}}\Big(\frac{x}{|x|^{2}}\Big)dx
\\
\nonumber &=&\int_{|x|\leq{\frac{1}{r}}}\frac{1}{|x|^{2n-(4m-a)\frac{n}{2m}}}u^{(p-1)\frac{n}{2m}}(x)dx
\\
\nonumber &=&\int_{|x|\leq{\frac{1}{r}}}\frac{u^{(p-1)\frac{n}{2m}}(x)}{|x|^{\frac{an}{2m}}}dx<+\infty.
\end{eqnarray}
Therefore, there exists a $\Lambda_{0}$ sufficiently large, such that, for any $\lambda\leq-\Lambda_{0}$,
\begin{equation}\label{s10}
  C\left\|\frac{\bar{u}^{p-1}(x)}{|x|^{\tau}}\right\|_{L^{\frac{n}{2m}}(\Sigma^{-}_{\lambda})}\leq\frac{1}{2}.
\end{equation}
Thus, we must have, for any $\frac{n}{n-2m}<q<\infty$,
\begin{equation}\label{s11}
\|\omega_{\lambda}\|_{{L^{q}(\Sigma^{-}_{\lambda})}}=0,
\end{equation}
Combining this with \eqref{s6} implies $\Sigma^{-}_{\lambda}=\emptyset$, and hence
\begin{equation}\label{s12}
\omega_{\lambda}(x)\geq0, \,\,\,\,\,\, \,\,\,\, \forall \, x\in\Sigma_{\lambda}\setminus\{0^{\lambda}\}.
\end{equation}

\emph{Step 2. Move the plane to the limiting position to derive symmetry and monotonicity.} Now we move the plane $T_{\lambda}$ to the right as long as \eqref{s4} holds. Define
\begin{equation}\label{s13}
\lambda_{0}:=\sup\{\lambda\in{\mathbb{R}}\, | \, \omega_{\rho}\geq0 \,\,\, \text{in} \,\,\, \Sigma_{\rho}\setminus\{0^{\rho}\}, \,\, \forall \, \rho\leq\lambda\}.
\end{equation}
By applying a entirely similar argument as in Step1, we can also start moving the plane from near $x_{1}=+\infty$ to the left, thus we must have $\lambda_{0}<+\infty$. Now, we will show that $\lambda_{0}=0$.

Suppose on the contrary that $\lambda_{0}<0$, we will show that
\begin{equation}\label{s14}
\omega_{\lambda_{0}}(x)\equiv0,\,\,\,\,\,\,\, \forall \,\, {x}\in\Sigma_{\lambda_{0}}\setminus\{0^{\lambda_{0}}\}.
\end{equation}

We prove \eqref{s14} by contradiction arguments. Suppose on the contrary that $\omega_{\lambda_{0}}(x)\geq0$, but $\omega_{\lambda_{0}}(x)$ is not identically zero in $\Sigma_{\lambda_{0}}\setminus\{0^{\lambda_{0}}\}$. We will obtain a contradiction with \eqref{s13} via showing that the plane $T_{\lambda}$ can be moved a little bit further to the right, more precisely, there exist an $0<\varepsilon<|\lambda_{0}|$ small enough, such that $w_{\lambda}\geq0$ in $\Sigma_{\lambda}\setminus\{0^{\lambda}\}$ for all $\lambda\in[\lambda_{0},\lambda_{0}+\varepsilon)$.

It can be clearly seen from \eqref{s8} and \eqref{s10} in \emph{Step 1} that, our goal is to prove that, one can choose $\varepsilon>0$ sufficiently small such that, for all $\lambda\in[\lambda_{0},\lambda_{0}+\varepsilon)$,
\begin{equation}\label{s15}
\left\|\frac{\bar{u}^{p-1}(x)}{|x|^{\tau}}\right\|_{L^{\frac{n}{2m}}(\Sigma^{-}_{\lambda})}\leq\frac{1}{2C},
\end{equation}
where the constant $C$ is the same as in \eqref{s8} and \eqref{s10}.

In fact, by \eqref{s9}, we can choose $R>0$ large enough, such that
\begin{equation}\label{s16}
\left(\int_{|x|\geq{R}}\frac{\bar{u}^{(p-1)\frac{n}{2m}}(x)}{|x|^{\tau\frac{n}{2m}}}dx\right)^{\frac{2m}{n}}<\frac{1}{4C}.
\end{equation}
Now fix this $R$, in order to derive \eqref{s15}, we only need to show
\begin{equation}\label{s17}
\lim_{\lambda\rightarrow\lambda_{0}+}\mu\left({\Sigma^{-}_{\lambda}}\cap{B_{R}(0)}\right)=0.
\end{equation}
To this end, we define $E_{\delta}:=\{x\in(\Sigma_{\lambda_{0}}\setminus\{0^{\lambda_{0}}\})\cap{B_{R}(0)}\,|\,w_{\lambda_{0}}(x)>\delta\}$
and $F_{\delta}:=({\Sigma_{\lambda_{0}}}\cap{B_{R}(0)})\setminus{E_{\delta}}$ for any $\delta>0$, and let $D_{\lambda}:=({\Sigma_{\lambda}}\setminus{\Sigma_{\lambda_{0}}})\cap{B_{R}(0)}$ for any $\lambda>\lambda_{0}$. Then, one can easily verify that
\begin{equation}\label{s18}
\lim_{\delta\rightarrow{0}^{+}}\mu(F_{\delta})=0, \,\,\,\,\,\,\, \lim_{\lambda\rightarrow\lambda_{0}+}\mu(D_{\lambda})=0,
\end{equation}
\begin{equation}\label{s19}
{\Sigma^{-}_{\lambda}}\cap{B_{R}(0)}={\Sigma^{-}_{\lambda}}\cap(E_{\delta}\cup{F_{\delta}}\cup{D_{\lambda}})
\subset({\Sigma^{-}_{\lambda}}\cap E_{\delta})\cup{F_{\delta}}\cup{D_{\lambda}}.
\end{equation}
For an arbitrary fixed $\eta>0$, one can choose a $\delta>0$ small enough, such that $\mu(F_{\delta})\leq\eta$. For this fixed $\delta$, we are to prove
\begin{equation}\label{s20}
\lim_{\lambda\rightarrow\lambda_{0}+}\mu(\Sigma_{\lambda}^{-}\cap{E_{\delta}})=0.
\end{equation}
Indeed, one can observe that $\bar{u}(x^{\lambda_{0}})-\bar{u}(x^{\lambda})=\omega_{\lambda_{0}}(x)-\omega_{\lambda}(x)>\delta$
for all $x\in{\Sigma_{\lambda}^{-}\cap{E_{\delta}}}$. It follows that
\begin{equation}\label{s21}
  (\Sigma_{\lambda}^{-}\cap{E_{\delta}})\subset{G_{\delta}^{\lambda}}
  :=\{x\in{B_{R}(0)}\cap\big((\Sigma_{\lambda_{0}}\setminus\{0^{\lambda_{0}}\})\setminus\{0^{\lambda}\}\big)\,|\,\bar{u}(x^{\lambda_{0}})-\bar{u}(x^{\lambda})>\delta\}.
\end{equation}
By Chebyshev's inequality, we get
\begin{eqnarray}\label{s22}
\mu(G_{\delta}^{\lambda})&\leq&\frac{1}{\delta^{r}}\int_{G_{\delta}^{\lambda}}\left|\bar{u}(x^{\lambda_{0}})-\bar{u}(x^{\lambda})\right|^{r}dx \\
\nonumber &\leq&\frac{1}{\delta^{r}}\int_{B_{R}(0)}\left|\bar{u}(x)-\bar{u}\left(x+2(\lambda-\lambda_{0})e_{1}\right)\right|^{r}dx
\end{eqnarray}
for any $1\leq{r}<\frac{n}{n-2m}$, where $e_{1}=(1,0......,0)\in{\mathbb{R}^{n}}$, and hence
\begin{equation}\label{s23}
\lim_{\lambda\rightarrow\lambda_{0}+}\mu(G^{\lambda}_{\delta})=0,
\end{equation}
from which \eqref{s20} follows immediately.

Therefore, by \eqref{s18}, \eqref{s19} and \eqref{s20}, we have
\begin{equation}\label{s24}
\lim_{\lambda\rightarrow\lambda_{0}+}\mu(\Sigma_{\lambda}^{-}\cap{B_{R}(0)})\leq\mu(F_{\delta})\leq\eta.
\end{equation}
Since $\eta>0$ is arbitrarily chosen, \eqref{s17} follows immediately from \eqref{s24}. Combining \eqref{s16} and \eqref{s17}, we finally arrive at \eqref{s15}.

From the last inequality of \eqref{s8}, we have, for any $\frac{n}{n-2m}<q<\infty$,
\begin{equation}\label{s25}
\|\omega_{\lambda}\|_{L^{q}(\Sigma^{-}_{\lambda})}\leq C\left\|\frac{\bar{u}^{p-1}}{|x|^{\tau}}\right\|_{L^{\frac{n}{2m}}(\Sigma^{-}_{\lambda})}
\cdot\|\omega_{\lambda}\|_{L^{q}(\Sigma^{-}_{\lambda})}.
\end{equation}
By \eqref{s15} and the above estimate, we deduce that, there exists an $\varepsilon>0$ sufficiently small, such that, for all $\lambda\in[\lambda_{0},\lambda_{0}+\varepsilon)$, $\|\omega_{\lambda}\|_{{L^{q}(\Sigma^{-}_{\lambda})}}=0$, thus $\mu(\Sigma^{-}_{\lambda})=0$. Furthermore, by \eqref{s6}, we have $\Sigma^{-}_{\lambda}=\emptyset$, and hence $\omega_{\lambda}(x)\geq0$ in $\Sigma_{\lambda}\setminus\{0^{\lambda}\}$ for all $\lambda\in[\lambda_{0},\lambda_{0}+\varepsilon)$. This contradicts with the definition of ${\lambda_{0}}$. Therefore, \eqref{s14} must hold. By \eqref{s6} and \eqref{s14}, we get, for any $x\in{\Sigma_{\lambda_{0}}}$,
\begin{eqnarray}\label{s26}
\nonumber 0=\omega_{\lambda_{0}}(x)&=&\bar{u}(x^{\lambda_{0}})-\bar{u}(x)
\\
 &=&C\int_{\Sigma_{\lambda_{0}}}\left(\frac{1}{|x-y|^{n-2m}}-\frac{1}{|x-y^{\lambda_{0}}|^{n-2m}}\right)
 \left(\frac{1}{|y^{\lambda_{0}}|^{\tau}}-\frac{1}{|y|^{\tau}}\right)\bar{u}^{p}(y)dy
\\
\nonumber &>&0.
\end{eqnarray}
That is a contradiction! Thus we must have $\lambda_{0}=0$, and hence
\begin{equation}\label{s27}
u(-x_{1},x_{2},....,x_{n})\geq{u(x_{1},x_{2},....,x_{n})},\,\,\,\,\, \forall \,\, {x}\in{\Sigma_{0}}.
\end{equation}
We can also move the plane from $x_{1}=+\infty$ to the left and the limiting position is also $\lambda_{0}=0$, so one has
\begin{equation}\label{s28}
u(-x_{1},x_{2},....,x_{n})\leq{u(x_{1},x_{2},....,x_{n})},\,\,\, \,\,\,\,\, \forall \,\, {x}\in{\Sigma_{0}}.
\end{equation}
Therefore,
\begin{equation}\label{s29}
u(-x_{1},x_{2},....,x_{n})\equiv{u(x_{1},x_{2},....,x_{n})},\,\,\, \,\,\,\,\, \forall \,\, {x}\in{R^{n}},
\end{equation}
that is, $u(x)$ is symmetric with respect to the plane $T_{0}=\{x\in\mathbb{R}^{n}\,|\,x_{1}=0\}$.

Since the equation is invariant under rotation, the $x_{1}$ direction can be chosen arbitrarily. We conclude that the positive solution $u(x)$ must be radially symmetric and monotone decreasing about the origin $0\in{\mathbb{R}^{n}}$. This finishes our proof of Theorem \ref{symmetry}.
\end{proof}

\subsection{Pohozaev identity and nonexistence of positive radially symmetric solutions}
By Theorem \ref{symmetry}, we deduce that the positive classical solution $u$ to PDE \eqref{PDE} is a positive radially symmetric solution to IE \eqref{IE}, i.e., $u(x)=u(|x|)>0$. Next, we will show that there is no positive radially symmetric classical solutions to \eqref{IE}, which leads to a contradiction.
\begin{thm}\label{nonexistence}
Assume $n\geq3$, $1\leq m<\frac{n}{2}$, $0\leq a<2m$ and $1<p<\frac{n+2m-2a}{n-2m}$, then \eqref{IE} has no positive radially symmetric classical solutions.
\end{thm}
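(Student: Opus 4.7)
The plan is to derive a contradiction by combining a Pohozaev-type identity with sharp asymptotic estimates on $u$. Suppose for contradiction that $u = u(|x|) > 0$ is a positive, radially symmetric, monotone decreasing classical solution of \eqref{IE} (and hence of \eqref{PDE}, by Theorem \ref{equivalence}). The strategy is to apply the Pohozaev multiplier $x \cdot \nabla u + \frac{n-2m}{2} u$ to the equation $(-\Delta)^m u = u^p/|x|^a$ on the annular region $B_R \setminus B_\varepsilon$, integrate by parts $2m$ times, and then pass to the limits $R \to +\infty$ and $\varepsilon \to 0^+$; the subcritical condition $p < \frac{n+2m-2a}{n-2m}$ will force the integral on the left to vanish.

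First I would pin down the asymptotics of $u$. Using the integral representation \eqref{IE} together with radial monotonicity and the elementary bound $|x-y|^{-(n-2m)} \geq (2|x|)^{-(n-2m)}$ for $|y| \leq |x|/2$, one gets $u(x) \gtrsim |x|^{-(n-2m)}$ for large $|x|$, which in turn yields the integrability
$$\int_{\mathbb{R}^n}\frac{u^{p+1}(x)}{|x|^a}\, dx<+\infty.$$
A complementary potential-theoretic estimate, applied iteratively, produces the matching decay $u(x) \leq C(1+|x|)^{-(n-2m)}$. Applying the same reasoning to the lower-order iterates $(-\Delta)^j u$, which by Theorem \ref{lemma0} are nonnegative and, by the proof of Theorem \ref{equivalence}, admit Riesz-potential representations of their own, delivers decay bounds for $(-\Delta)^j u$ and their first-order derivatives of the orders needed. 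Near the origin the local regularity $u \in C^{2m-2}(\mathbb{R}^n)$ (or in the variant of Remark \ref{remark3}, merely $u \in C(\mathbb{R}^n)$) combined with $a < 2m$ keeps every term in the boundary integrals below integrable.

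The central step is the Pohozaev computation itself. Multiplying the PDE by $x \cdot \nabla u + \frac{n-2m}{2} u$ and integrating by parts, the bulk term simplifies to
$$\left(\frac{n-a}{p+1}-\frac{n-2m}{2}\right)\int_{B_R\setminus B_\varepsilon} \frac{u^{p+1}}{|x|^a}\, dx = \mathcal{B}_R(u) - \mathcal{B}_\varepsilon(u),$$
where $\mathcal{B}_\rho(u)$ denotes a sum of surface integrals on $\partial B_\rho$ made of pointwise products of derivatives $\partial^j_\nu u$ with $j \leq 2m-1$. The decay estimates of the previous step ensure $\mathcal{B}_R(u) \to 0$ as $R \to +\infty$, while the local regularity and $a < 2m$ guarantee $\mathcal{B}_\varepsilon(u) \to 0$ as $\varepsilon \to 0^+$. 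Since $1 < p < \frac{n+2m-2a}{n-2m}$ is exactly equivalent to
$$\frac{n-a}{p+1} - \frac{n-2m}{2} > 0,$$
passing to the limit forces $\int_{\mathbb{R}^n} u^{p+1}/|x|^a \, dx = 0$, contradicting the positivity of $u$.

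The main technical obstacle is verifying that the boundary terms $\mathcal{B}_R(u)$ genuinely vanish in the polyharmonic case $m \geq 2$, since the expansion of $\mathcal{B}_\rho(u)$ contains products of derivatives up to order $2m-1$ and each factor must be individually controlled on $\partial B_\rho$. The Riesz-potential representations of the iterated Laplacians made available by the super poly-harmonic property (Theorem \ref{lemma0}) together with the PDE-IE equivalence (Theorem \ref{equivalence}) are precisely what make these decay estimates accessible, so the proof ultimately rests on feeding the outputs of those two earlier results into the scaling/Pohozaev identity.
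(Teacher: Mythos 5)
Your overall strategy (a Pohozaev identity on $B_R\setminus B_\varepsilon$ with the multiplier $x\cdot\nabla u+\frac{n-2m}{2}u$, boundary terms killed by decay, and the sign of $\frac{n-a}{p+1}-\frac{n-2m}{2}$ giving the contradiction) is a legitimate route, and it differs from the paper, which never integrates the differential operator by parts: the paper differentiates the dilation identity $u(\mu x)=C\int\frac{u^p(y)}{|\mu x-y|^{n-2m}|y|^a}dy$ at $\mu=1$ to express $x\cdot\nabla u$ as an integral, multiplies by $\frac{u^p}{|x|^a}$, integrates over $B_r$, and closes the identity by the symmetrization $|x-y|^2=(x-y)\cdot x+(y-x)\cdot y$ in the resulting double integral. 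That version produces exactly one surface term, $\frac{r^{1-a}}{p+1}\int_{\partial B_r}u^{p+1}d\sigma$, which is sent to zero along a sequence $r_j$ using only the integrability of $u^{p+1}/|x|^a$; no derivative estimates and no inner boundary at $\varepsilon$ are needed.

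However, your proposal has a genuine gap in the decay step, and it is precisely the step your whole boundary-term analysis rests on. First, the logic ``$u(x)\gtrsim|x|^{-(n-2m)}$ yields $\int_{\mathbb{R}^n}u^{p+1}/|x|^a\,dx<\infty$'' is backwards: integrability requires an upper bound on $u$, not a lower bound. Second, the matching upper bound $u(x)\leq C(1+|x|)^{-(n-2m)}$ is not available in the full range of the theorem: it would require $\int_{\mathbb{R}^n}u^p(y)|y|^{-a}dy<\infty$, and with the only decay obtainable from monotonicity, namely $u(r)\lesssim r^{-\frac{2m-a}{p-1}}$ (the paper's \eqref{P10}), that integral can be shown finite only when $p<\frac{n-a}{n-2m}$; for $\frac{n-a}{n-2m}\leq p<\frac{n+2m-2a}{n-2m}$ one has $\frac{2m-a}{p-1}\leq n-2m$ and your bootstrap to $|x|^{-(n-2m)}$ breaks down (this is essentially the obstruction that confined Lei's earlier result to $p<\frac{n-a}{n-2m}$). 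The correct statement is the weaker decay $u\lesssim r^{-\frac{2m-a}{p-1}}$, which together with subcriticality is exactly equivalent to $\int u^{p+1}/|x|^a<\infty$; the paper derives it from the IE plus radial monotonicity. Third, even granting this decay, your route still needs pointwise bounds of the form $|\nabla^k u|\lesssim r^{-\frac{2m-a}{p-1}-k}$ up to $k=2m-1$ (or at least along a sequence of radii) to make $\mathcal{B}_R(u)\to0$, plus a separate argument at $\partial B_\varepsilon$ since $u$ is only $C^{2m-2}$ at the origin when $a>0$; you acknowledge this as the main obstacle but do not supply it, and it does not follow merely from the Riesz representations — one would need, e.g., scaled interior elliptic estimates. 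As written, the proposal therefore does not close; switching to the paper's integral-equation Pohozaev argument (or supplying the derivative decay estimates and the $\varepsilon$-limit in the claimed range) is what is missing.
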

\begin{proof}
Suppose $u(x)=u(|x|)>0$ is a positive radially symmetric classical solution to \eqref{IE}, that is,
\begin{equation}\label{P1}
  u(x)=C\int_{\mathbb{R}^{n}}\frac{{u^{p}({y}})}{|x-y|^{n-2m}|y|^{a}}d{y}.
\end{equation}
Then, for any $\mu>0$,
\begin{equation}\label{P2}
  u({\mu}x)=C\int_{\mathbb{R}^{n}}\frac{{u^{p}({y}})}{|{\mu}x-y|^{n-2m}|y|^{a}}d{y}.
\end{equation}
Take the derivatives on both sides of \eqref{P2} with respect to ${\mu}$ and let ${\mu}=1$, we get
\begin{eqnarray}\label{P3}
x\cdot\nabla{u(x)}&=& C\frac{\mathrm{d} }{\mathrm{d}\mu}\int_{\mathbb{R}^{n}}\frac{{u^{p}({y}})}{|{\mu}x-y|^{n-2m}|y|^{a}}d{y}\bigg|_{{\mu}=1}
\\
\nonumber &=& -(n-2m)C\int_{\mathbb{R}^{n}}\frac{{u^{p}({y}})({\mu}x-y)\cdot x}{|{\mu}x-y|^{n-2m-2}|y|^{a}}d{y}\bigg|_{{\mu}=1}
\\
\nonumber &=& -(n-2m)C\int_{\mathbb{R}^{n}}\frac{{u^{p}({y}})(x-y)\cdot x}{|x-y|^{n-2m-2}|y|^{a}}d{y}.
\end{eqnarray}

Multiply both sides of \eqref{P3} by $\frac{{u^{p}({x}})}{|x|^{a}}$ and integrate on $B_{r}(0)$ for any $r>0$, one has
\begin{eqnarray}\label{P4}
LHS &=& \int_{B_{r}(0)}(x\cdot\nabla{u(x)}){\frac{{u^{p}({x}})}{|x|^{a}}}dx
\\
\nonumber &=& \int_{0}^{r}\int_{{\partial}B_{s}(0)}s\frac{\mathrm{d}(u(s)) }{\mathrm{d} s}\cdot{\frac{{u^{p}({s}})}{s^{a}}}d{\sigma}ds
\\
\nonumber &=& \int_{0}^{r}\frac{w_{n}{s^{n-a}}}{p+1}d\big(u^{p+1}(s)\big)
\\
\nonumber &=& \frac{w_{n}}{p+1}r^{n-a}u^{p+1}(r)-\frac{(n-a)w_{n}}{p+1}\int_{0}^{r}u^{p+1}(s)s^{n-a-1}ds
\\
\nonumber &=& \frac{r^{1-a}}{p+1}\int_{{\partial}B_{r}(0)}u^{p+1}(x)d\sigma-\frac{n-a}{p+1}\int_{B_{r}(0)}\frac{u^{p+1}(x)}{|x|^{a}}dx,
\end{eqnarray}
and
\begin{equation}\label{P5}
  RHS=-(n-2m)C\int_{B_{r}(0)}\frac{u^{p}(x)}{|x|^{a}}\int_{\mathbb{R}^{n}}\frac{(x-y)\cdot x{u}^{p}(y)}{|x-y|^{n-2m-2}|y|^{a}}dy,
\end{equation}
where $w_{n}$ denotes the area of the unit sphere. Since $u$ is a positive radially symmetric classical solution to \eqref{IE}, we have $u(r)$ monotone decreases about $r\geq0$ and
\begin{eqnarray}\label{P6}
u(x)=u(r)&=& C\int_{\mathbb{R}^{n}}\frac{u^{p}(y)}{|x-y|^{n-2m}|y|^{a}}dy
\\
\nonumber &=& \frac{C}{r^{n-2m}}\int_{\mathbb{R}^{n}}\frac{u^{p}(y)}{\left|\frac{x}{|x|}-\frac{y}{r}\right|^{n-2m}|y|^{a}}dy
\\
\nonumber &\geq& \frac{C}{r^{n-2m}}\int_{0}^{r}\int_{{\partial}B_{s}(0)}\frac{u^{p}(s)}{\left|\frac{x}{|x|}-\frac{s\sigma}{r}\right|^{n-2m}s^{a}}d(s\sigma)ds,
\end{eqnarray}
where $r=|x|$ and $\sigma$ is an arbitrary unit vector on $\partial B_{1}(0)$. Observe that
\begin{equation}\label{P7}
  \frac{1}{\left|\frac{x}{|x|}-\frac{s\sigma}{r}\right|^{n-2m}}\geq\frac{1}{2^{n-2m}},\,\,\,\,\,\,\,\,\,\,\, \forall \,\,s\in[0,r] \,\,\,\text{and} \,\,\, \forall \,\, \sigma\in\partial B_{1}(0),
\end{equation}
thus we infer from \eqref{P6} that
\begin{eqnarray}\label{P8}
\nonumber
u(x)=u(r)
 \nonumber &\geq& \frac{C}{r^{n-2m}}\int_{0}^{r}w_{n}s^{n-1-a}\frac{u^{p}(s)}{2^{n-2m}}ds
\\
\nonumber &\geq& \frac{Cw_{n}}{{(2r)}^{n-2m}}u^{p}(r)\int_{0}^{r}s^{n-1-a}ds
\\
\nonumber &=& \frac{Cw_{n}}{{2}^{n-2m}(n-a)}\cdot\frac{u^{p}(r)}{r^{n-2m}}r^{n-a}
\\
\nonumber &=& \frac{Cw_{n}r^{2m-a}u^{p}(r)}{{2}^{n-2m}(n-a)}.
\end{eqnarray}
Therefore,
\begin{equation}\label{P9}
  u^{p-1}(r)\leq\frac{{2}^{n-2m}(n-a)}{Cw_{n}r^{2m-a}}.
\end{equation}
Let $\tilde{C}=\Big({\frac{{2}^{n-2m}(n-a)}{Cw_{n}}}\Big)^{\frac{1}{p-1}}>0$, then one has the following decay estimate
\begin{equation}\label{P10}
  u(r)\leq\frac{\tilde{C}}{r^{\frac{2m-a}{p-1}}}, \quad\quad\, \forall \,\, r>0.
\end{equation}
Note that $1<p<\frac{n+2m-2a}{n-2m}$, we derive from the decay estimate \eqref{P10} that
\begin{equation}\label{P11}
\int_{\mathbb{R}^{n}}\frac{u^{p+1}(x)}{|x|^{a}}dx<\infty,
\end{equation}
and hence
\begin{equation}\label{P12}
 \int_{0}^{\infty}r^{-a}\left(\int_{{\partial}B_{r}(0)}u^{p+1}(x)d{\sigma}\right)dr<\infty.
\end{equation}
Thus there exists a sequence $\{r_{j}\}$, such that ${r_{j}}\rightarrow+\infty$ as $j\rightarrow\infty$ and
\begin{equation}\label{P13}
r_{j}^{1-a}\left(\int_{{\partial}B_{r_{j}}(0)}u^{p+1}(x)d{\sigma}\right)\rightarrow0,\,\,\,\,\,\quad \text{as} \,\, j\rightarrow\infty.
\end{equation}
By letting $r=r_{j}\rightarrow+\infty$ in \eqref{P4} and \eqref{P5}, we conclude from \eqref{P11} and \eqref{P13} that
\begin{equation}\label{P14}
-\frac{n-a}{p+1}\int_{\mathbb{R}^{n}}\frac{u^{p+1}(x)}{|x|^{a}}dx
=-(n-2m)C\int_{\mathbb{R}^{n}}\frac{u^{p}(x)}{|x|^{a}}\int_{\mathbb{R}^{n}}\frac{{u^{p}({y}})x\cdot(x-y)}{|x-y|^{n-2m+2}|y|^{a}}d{y}dx.
\end{equation}
At the same time, by direct calculations, we have
\begin{eqnarray}\label{P15}
&& \frac{2m-n}{2}\int_{\mathbb{R}^{n}}\frac{u^{p+1}(x)}{|x|^{a}}dx \\
\nonumber &=&\frac{2m-n}{2}\int_{\mathbb{R}^{n}}\frac{u^{p}(x)}{|x|^{a}}C\int_{\mathbb{R}^{n}}\frac{{u^{p}({y}})}{|x-y|^{n-2m}|y|^{a}}d{y}dx
\\
\nonumber &=& \frac{(2m-n)C}{2}\int_{\mathbb{R}^{n}}\int_{\mathbb{R}^{n}}\frac{|x-y|^{2}u^{p}(x)u^{p}(y)}{|x|^{a}|y|^{a}|x-y|^{n-2m+2}}dydx
\\
\nonumber &=& \frac{(2m-n)C}{2}\int_{\mathbb{R}^{n}}\int_{\mathbb{R}^{n}}\frac{[(x-y)\cdot x+(y-x)\cdot y]u^{p}(x)u^{p}(y)}{|x|^{a}|y|^{a}|x-y|^{n-2m+2}}dydx
\\
\nonumber &=& -(n-2m)C\int_{\mathbb{R}^{n}}\frac{u^{p}(x)}{|x|^{a}}\int_{\mathbb{R}^{n}}\frac{{u^{p}({y}})x\cdot (x-y)}{|x-y|^{n-2m+2}|y|^{a}}d{y}dx.
\end{eqnarray}
Combining \eqref{P14} and \eqref{P15}, we deduce further that
\begin{equation}\label{P16}
\left(\frac{2m-n}{2}+\frac{n-a}{p+1}\right)\int_{\mathbb{R}^{n}}\frac{u^{p+1}(x)}{|x|^{a}}dx=0.
\end{equation}
Since $1<p<\frac{n+2m-2a}{n-2m}$, it is easy to see that
\begin{equation}\label{P17}
\frac{2m-n}{2}+\frac{n-a}{p+1}>0,
\end{equation}
thus we must have
\begin{equation}\label{P18}
\int_{\mathbb{R}^{n}}\frac{u^{p+1}(x)}{|x|^{a}}dx=0,
\end{equation}
which is a contradiction with $u>0$! Therefore, \eqref{IE} does not have any positive radially symmetric classical solutions.
\end{proof}

Since we have proved the positive classical solution $u$ to PDE \eqref{PDE} is also a positive radially symmetric solution to IE \eqref{IE}, Theorem \ref{nonexistence} leads to a contradiction. Therefore, we must have $u\equiv0$ in $\mathbb{R}^{n}$, that is, the unique nonnegative solution to PDE \eqref{PDE} is $u\equiv0$ in $\mathbb{R}^{n}$.

This concludes the proof of Theorem \ref{Thm0}.

\section{Proof of Theorem \ref{Thm1}}
In this section, we will prove Theorem \ref{Thm1} via the method of moving planes in local way and blowing-up techniques.

\subsection{Boundary layer estimates}
In this subsection, we will first establish the following boundary layer estimates by applying Kelvin transform and the method of moving planes in local way. The properties of the boundary $\partial\Omega$ will play a crucial role in our discussions.
\begin{thm}\label{Boundary}
Assume one of the following two assumptions
\begin{equation*}
  \text{i)} \,\,\, \Omega \,\, \text{is strictly convex}, \,\, 1<p<\frac{n+2m}{n-2m}, \quad\quad\quad\, \text{or} \quad\quad\quad\, \text{ii)} \,\,\, 1<p\leq\frac{n+2}{n-2}
\end{equation*}
holds. Then, there exists a $\bar{\delta}>0$ depending only on $\Omega$ such that, for any positive solution $u\in C^{2m}(\Omega)\cap C^{2m-2}(\overline{\Omega})$ to the higher order Navier problem \eqref{tNavier}, we have
\begin{equation*}
  \|u\|_{L^{\infty}(\overline{\Omega}_{\bar{\delta}})}\leq C(n,m,p,\lambda_{1},\Omega),
\end{equation*}
where the boundary layer $\overline{\Omega}_{\bar{\delta}}:=\{x\in\overline{\Omega}\,|\,dist(x,\partial\Omega)\leq\bar{\delta}\}$.
\end{thm}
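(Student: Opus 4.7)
The plan is to reduce the higher-order Navier problem to a cooperative second-order system using super-polyharmonicity, then run the method of moving planes in local way at every point of $\partial\Omega$; a Kelvin transform handles the case where strict convexity is absent. First (setup and positivity), let $v_i := (-\Delta)^i u$ for $i=0,1,\ldots,m-1$. Since $(-\Delta)^m u = u^p + t \ge 0$ in $\Omega$ and $v_i \equiv 0$ on $\partial\Omega$, iterated application of the maximum principle — starting from $-\Delta v_{m-1}\ge 0$, then $-\Delta v_{m-2}=v_{m-1}\ge 0$, and so on — gives $v_i>0$ in $\Omega$ for every $i$. This cooperative positivity is what substitutes for the absent maximum principle for $(-\Delta)^m$ and allows moving planes to be carried out on each component.

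In case (i), for each $x_0\in\partial\Omega$ I would choose a unit vector $\nu$ close to the inward normal and slide hyperplanes $T_\lambda$ orthogonal to $\nu$; strict convexity guarantees that for $\lambda$ near the boundary value the reflection of the narrow cap $\Sigma_\lambda:=\{x\in\Omega : x\cdot\nu<\lambda\}$ sits inside $\Omega$, so $u_\lambda(x):=u(x^\lambda)$ is defined on $\Sigma_\lambda$ and the cooperative system $\{v_i\}$ can be compared with $\{v_{i,\lambda}\}$ on $\Sigma_\lambda$. In case (ii), without strict convexity one first applies a Kelvin transform with pole at an exterior anchor point $\bar x$ sitting just outside $\partial\Omega$ near $x_0$, setting
\begin{equation*}
\tilde u(y) := |y-\bar x|^{-(n-2m)}\, u\!\left(\bar x + \tfrac{y-\bar x}{|y-\bar x|^{2}}\right).
\end{equation*}
A direct computation gives, on the inverted domain $\tilde\Omega$,
\begin{equation*}
(-\Delta)^{m}\tilde u(y) = \frac{\tilde u^{p}(y)}{|y-\bar x|^{\tau}} + \frac{t}{|y-\bar x|^{n+2m}}, \qquad \tau := (n+2m)-p(n-2m),
\end{equation*}
with Navier conditions $(-\Delta)^{i}\tilde u=0$ on $\partial\tilde\Omega$. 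The exponent $\tau$ is nonnegative under the subcritical hypothesis of case (ii), and this sign is the analytic content that makes the weight at a point no smaller than at its reflection across a suitable $T_\lambda$ — which is exactly what is needed to start moving planes.

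In both cases, the moving planes are applied locally (only in a fixed neighbourhood of $x_0$, respectively $\bar x^{\ast}$) to each $v_i$ or $\tilde v_i$ using a narrow-region/Hopf-type argument, and produce monotonicity of $u$ along the inward normal to $\partial\Omega$ throughout a layer of uniform thickness $\bar\delta>0$ depending only on $\Omega$. Compactness of $\partial\Omega$ and a finite cover then give $\sup_{\overline\Omega_{\bar\delta}} u \le C$, where $C$ controls $u$ by its values on a fixed interior compact set — which in turn is bounded by a blow-up argument invoking Theorem \ref{Thm0} (after an odd extension in the half-space case). The main obstacle I expect is producing $\bar\delta$ uniform in the solution $u$: the classical narrow-region lemma for $-\Delta$ furnishes a threshold depending on the $L^{\infty}$-norm of the nonlinearity, which would degenerate as $\|u\|_{\infty}\to\infty$; the remedy is to exploit the cooperative structure together with the zero boundary conditions $v_i|_{\partial\Omega}=0$ to extract $u$-independent smallness from the cap width itself, and this is precisely where strict convexity in case (i) or the Kelvin-transform weight decay $|y-\bar x|^{-\tau}$ in case (ii) does the essential work.
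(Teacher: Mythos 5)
Your skeleton (decompose into the cooperative system $v_i=(-\Delta)^i u$, get $v_i>0$, run the method of moving planes locally near each boundary point, use a Kelvin transform when convexity is absent) matches the paper's, but two of your key steps do not work as stated. First, in case ii) you apply the \emph{poly-harmonic} Kelvin transform $\tilde u(y)=|y-\bar x|^{-(n-2m)}u(\cdot)$. While $(-\Delta)^m$ does transform covariantly under this map, the intermediate operators do not: $(-\Delta)^i\tilde u$ is \emph{not} a weighted Kelvin image of $(-\Delta)^i u$ (already for $m=2$, $\Delta\bigl(|y|^{4-n}u(y/|y|^2)\bigr)\neq |y|^{-n-2}(\Delta u)(y/|y|^2)$, as the test $u\equiv1$ shows), so the transformed function neither satisfies the Navier conditions $(-\Delta)^i\tilde u=0$ on $\partial\tilde\Omega$ nor inherits the positivity/cooperative second-order structure on which the componentwise moving-plane argument rests. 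This is precisely why the paper instead applies the \emph{second-order} Kelvin transform (weight $|x^{\ast}-\overline{x^{0}}|^{-(n-2)}$) to each component $u_i$ separately, obtaining the system \eqref{3-49} with weights $|x^{\ast}-\overline{x^{0}}|^{-4}$ and $|x^{\ast}-\overline{x^{0}}|^{-\tau}$, $\tau=n+2-p(n-2)\geq0$; the restriction $p\leq\frac{n+2}{n-2}$ in case ii) is exactly the condition $\tau\geq0$ for this transform. Your exponent $\tau=n+2m-p(n-2m)$ would misleadingly suggest case ii) works for all $p<\frac{n+2m}{n-2m}$ without convexity, which is not what the theorem claims.

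Second, your endgame is circular. You propose to conclude by controlling $u$ on the boundary layer through its values on an interior compact set, bounded "by a blow-up argument invoking Theorem \ref{Thm0}"; but the blow-up/Liouville machinery is the content of Theorem \ref{Thm1}, whose proof \emph{uses} Theorem \ref{Boundary} to guarantee that maximum points stay a fixed distance from $\partial\Omega$. The paper instead converts the normal-direction monotonicity into an $L^\infty$ bound by a purely local mechanism: strict convexity (resp.\ local convexity of the inverted boundary) yields a cone $\overline{V_x}$ of monotonicity directions with $u(x)=\min_{\overline{V_x}}u$, and the eigenfunction estimate of Lemma \ref{lemma2}, $\int_\Omega u^p\phi\leq\lambda_1^{p'}|\Omega|$, then gives $u^p(x)\,|\overline{V_x}|\,\min\phi\leq C$, hence the bound on the layer (this is also where the constant $\lambda_1$ in the statement comes from). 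Your proposal contains no substitute for this step. A further, smaller, inaccuracy: the narrow-region threshold cannot be made independent of $u$ as you hope — in the paper $\delta$ explicitly depends on $\|u\|_{L^\infty(\overline\Omega)}^{p-1}$ (see \eqref{3-21}, \eqref{3-73}); the uniform thickness $\bar\delta$ comes from continuing the plane to a geometric limit determined only by $\Omega$, not from a $u$-independent smallness constant.
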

\begin{rem}\label{remark2}
When $m=1$, Theorem \ref{Boundary} still holds for $p=\frac{n+2}{n-2}$.
\end{rem}
\begin{proof}
We will carry out our proof of Theorem \ref{Boundary} by discussing the two different assumptions i) and ii) separately.

\emph{Case i)} $\Omega$ is strictly convex and $1<p<\frac{n+2m}{n-2m}$. For any $x^{0}\in\partial\Omega$, let $\nu^{0}$ be the unit internal normal vector of $\partial\Omega$ at $x^{0}$, we will show that $u(x)$ is monotone increasing along the internal normal direction in the region
\begin{equation}\label{3-1}
  \overline{\Sigma_{\delta_{0}}}=\left\{x\in\overline{\Omega}\,|\,0\leq(x-x^{0})\cdot\nu^{0}\leq\delta_{0}\right\},
\end{equation}
where $\delta_{0}>0$ depends only on $x^{0}$ and $\Omega$.

To this end, we define the moving plane by
\begin{equation}\label{3-2}
  T_{\lambda}:=\{x\in\mathbb{R}^n\,|\,(x-x^{0})\cdot\nu^{0}=\lambda\},
\end{equation}
and denote
\begin{equation}\label{3-2'}
  \Sigma_{\lambda}:=\{x\in\Omega\,|\,0<(x-x^{0})\cdot\nu^{0}<\lambda\}
\end{equation}
for $\lambda>0$, and let $x^{\lambda}$ be the reflection of the point $x$ about the plane $T_{\lambda}$.

Let $u_{i}:=(-\Delta)^{i}u$ for $1\leq i\leq m-1$. By maximum principle, we have
\begin{equation}\label{3-3}
  u_{i}(x)>0 \quad\quad\, \text{in} \,\, \Omega
\end{equation}
for $1\leq i\leq m-1$. Define
\begin{equation}\label{3-4}
  W^{\lambda}(x):=u(x^{\lambda})-u(x) \quad\quad\, \text{and} \quad\quad\, W^{\lambda}_{i}(x):=u_{i}(x^{\lambda})-u_{i}(x)
\end{equation}
for $1\leq i\leq m-1$. Then we can deduce from \eqref{tNavier} that, for any $\lambda$ satisfying the reflection of $\Sigma_{\lambda}$ is contained in $\Omega$,
\begin{equation}\label{3-5}
\left\{{\begin{array}{l} {-\Delta W^{\lambda}_{m-1}(x)=u^{p}(x^{\lambda})-u^{p}(x)=p\eta^{p-1}_{\lambda}(x)W^{\lambda}(x), \,\,\,\,\,\, x\in\Sigma_{\lambda},}\\  {} \\ {-\Delta W^{\lambda}_{m-2}(x)=W^{\lambda}_{m-1}(x), \,\,\,\,\,\, x\in\Sigma_{\lambda},} \\ \cdots\cdots \\ {-\Delta W^{\lambda}(x)=W^{\lambda}_1(x), \,\,\,\,\,\, x\in\Sigma_{\lambda},} \\ {} \\
{W^{\lambda}(x)\geq0, \, W^{\lambda}_{1}(x)\geq0, \cdots, W^{\lambda}_{m-1}(x)\geq0, \,\,\,\,\,\, x\in\partial\Sigma_{\lambda},} \\ \end{array}}\right.
\end{equation}
where $\eta_{\lambda}(x)$ is valued between $u(x^{\lambda})$ and $u(x)$ by mean value theorem. Now, we will prove that there exists some $\delta>0$ sufficiently small (depending on $m$, $p$, $\|u\|_{L^{\infty}(\overline{\Omega})}$ and $\Omega$), such that
\begin{equation}\label{3-6}
  W^{\lambda}(x)\geq 0 \quad\quad\, \text{in} \,\, \Sigma_{\lambda}
\end{equation}
for all $0<\lambda\leq\delta$. This provides a starting point to move the plane $T_{\lambda}$.

Indeed, suppose on the contrary that there exists a $0<\lambda\leq\delta$ such that
\begin{equation}\label{3-7}
  W^{\lambda}(x)<0 \quad\quad\, \text{somewhere in} \,\, \Sigma_{\lambda}.
\end{equation}
Let
\begin{equation}\label{3-8}
  \zeta(x):=\cos\frac{(x-x^{0})\cdot\nu^{0}}{\delta},
\end{equation}
then it follows that $\zeta(x)\in[\cos1,1]$ for any $x\in\Sigma_{\lambda}$ and $-\frac{\Delta\zeta(x)}{\zeta(x)}=\frac{1}{\delta^2}$. Define
\begin{equation}\label{3-9}
  \overline{W^{\lambda}}(x):=\frac{W^{\lambda}(x)}{\zeta(x)} \quad\quad \text{and} \quad\quad \overline{W^{\lambda}_{i}}(x):=\frac{W^{\lambda}_{i}(x)}{\zeta(x)}
\end{equation}
for $i=1,\cdots, m-1$ and $x\in\Sigma_{\lambda}$. Then there exists a $x_{0}\in\Sigma_{\lambda}$ such that
\begin{equation}\label{3-10}
  \overline{W^{\lambda}}(x_{0})=\min_{\overline{\Sigma_{\lambda}}}\overline{W^{\lambda}}(x)<0.
\end{equation}
Since
\begin{equation}\label{3-11}
  -\Delta W^{\lambda}(x_{0})=-\Delta\overline{W^{\lambda}}(x_{0})\zeta(x_{0})-2\nabla\overline{W^{\lambda}}(x_{0})\cdot\nabla\zeta(x_{0})
  -\overline{W^{\lambda}}(x_{0})\Delta\zeta(x_{0}),
\end{equation}
one immediately has
\begin{equation}\label{3-12}
  W^{\lambda}_{1}(x_{0})=-\Delta W^{\lambda}(x_{0})\leq\frac{1}{\delta^2}W^{\lambda}(x_{0})<0.
\end{equation}
Thus there exists a $x_{1}\in\Sigma_{\lambda}$ such that
\begin{equation}\label{3-13}
  \overline{W^{\lambda}_{1}}(x_{1})=\min_{\overline{\Sigma_{\lambda}}}\overline{W^{\lambda}_{1}}(x)<0.
\end{equation}
Similarly, it follows that
\begin{equation}\label{3-14}
  W^{\lambda}_{2}(x_{1})=-\Delta W^{\lambda}_{1}(x_{1})\leq\frac{1}{\delta^2}W^{\lambda}_{1}(x_{1})<0.
\end{equation}
Continuing this way, we get $\{x_{i}\}_{i=1}^{ m-1}\subset\Sigma_{\lambda}$ such that
\begin{equation}\label{3-15}
  \overline{W^{\lambda}_{i}}(x_{i})=\min_{\overline{\Sigma_{\lambda}}}\overline{W^{\lambda}_{i}}(x)<0,
\end{equation}
\begin{equation}\label{3-16}
  W^{\lambda}_{i+1}(x_{i})=-\Delta W^{\lambda}_{i}(x_{i})\leq\frac{1}{\delta^2}W^{\lambda}_{i}(x_{i})<0
\end{equation}
for $i=1,2,\cdots, m-2$, and
\begin{equation}\label{3-17}
  \overline{W^{\lambda}_{ m-1}}(x_{ m-1})=\min_{\overline{\Sigma_{\lambda}}}\overline{W^{\lambda}_{ m-1}}(x)<0,
\end{equation}
\begin{equation}\label{3-18}
  p\eta^{p-1}_{\lambda}(x_{ m-1})W^{\lambda}(x_{ m-1})=-\Delta W^{\lambda}_{ m-1}(x_{ m-1})\leq\frac{1}{\delta^2}W^{\lambda}_{ m-1}(x_{ m-1})<0.
\end{equation}
Therefore, we have
\begin{eqnarray}\label{3-19}
  W^{\lambda}(x_{0}) &\geq& \delta^2W^{\lambda}_{1}(x_{0})\geq \delta^{2}W^{\lambda}_{1}(x_{1})\frac{\zeta(x_{0})}{\zeta(x_{1})}
  \geq\delta^{4}W^{\lambda}_{2}(x_{1})\frac{\zeta(x_{0})}{\zeta(x_{1})} \\
 \nonumber &\geq& \delta^{4}W^{\lambda}_{2}(x_{2})\frac{\zeta(x_{0})}{\zeta(x_{2})}\geq\delta^{6}W^{\lambda}_{3}(x_{2})\frac{\zeta(x_{0})}{\zeta(x_{2})}
 \geq\delta^{6}W^{\lambda}_{3}(x_{3})\frac{\zeta(x_{0})}{\zeta(x_{3})} \\
 \nonumber  &\geq& \cdots\cdots\geq\delta^{2m-2}W^{\lambda}_{ m-1}(x_{ m-1})\frac{\zeta(x_{0})}{\zeta(x_{ m-1})} \\
 \nonumber  &\geq& p\delta^{2m}\eta^{p-1}_{\lambda}(x_{ m-1})W^{\lambda}(x_{ m-1})\frac{\zeta(x_{0})}{\zeta(x_{ m-1})} \\
 \nonumber  &\geq& p\delta^{2m}\|u\|^{p-1}_{L^{\infty}(\overline{\Omega})}W^{\lambda}(x_{0}),
\end{eqnarray}
that is,
\begin{equation}\label{3-20}
  1\leq p\delta^{2m}\|u\|^{p-1}_{L^{\infty}(\overline{\Omega})},
\end{equation}
which is absurd if we choose $\delta>0$ small enough such that
\begin{equation}\label{3-21}
  0<\delta<\left(p\|u\|^{p-1}_{L^{\infty}(\overline{\Omega})}\right)^{-\frac{1}{2m}}.
\end{equation}
So far, our conclusion is: the method of moving planes can be carried on up to $\lambda=\delta$.

Next, we will move the plane $T_{\lambda}$ further along the internal normal direction at $x^{0}$ as long as the property
\begin{equation}\label{3-22}
  W^{\lambda}(x)\geq0 \quad\quad\, \text{in} \,\, \Sigma_{\lambda}
\end{equation}
holds. One can conclude that the moving planes process can be carried on (with the property \eqref{3-22}) as long as the reflection of $\overline{\Sigma_{\lambda}}$ is still contained in $\Omega$.

In fact, let $T_{\lambda_{0}}$ be a plane such that \eqref{3-22} holds and the reflection of $\overline{\Sigma_{\lambda_{0}}}$ about $T_{\lambda_{0}}$ is contained in $\Omega$. Then there exists a $\kappa>0$ such that, the reflection of $\overline{\Sigma_{\lambda_{0}+\kappa}}$ about $T_{\lambda_{0}+\kappa}$ is still contained in $\Omega$. By \eqref{3-5}, \eqref{3-22} and strong maximum principles, one actually has
\begin{equation}\label{3-23}
  W^{\lambda_{0}}(x)>0, \quad\quad\, W_{i}^{\lambda_{0}}(x)>0 \quad\quad \text{in} \,\, \Sigma_{\lambda_{0}},
\end{equation}
thus there exists a constant $c_{\delta}>0$ such that
\begin{equation}\label{3-24}
  W^{\lambda_{0}}(x)\geq c_{\delta}>0, \quad\quad\, W_{i}^{\lambda_{0}}(x)\geq c_{\delta}>0 \quad\quad \text{in} \,\, \overline{\Sigma_{\lambda_{0}-\frac{\delta}{2}}}.
\end{equation}
By the continuity of $u$, we infer that, there exists a $0<\epsilon<\min\{\kappa,\frac{\delta}{2}\}$ such that, for any $\lambda\in(\lambda_{0},\lambda_{0}+\epsilon]$,
\begin{equation}\label{3-25}
  W^{\lambda}(x)>0, \quad\quad\, W_{i}^{\lambda}(x)>0 \quad\quad \text{in} \,\, \overline{\Sigma_{\lambda_{0}-\frac{\delta}{2}}}.
\end{equation}
Suppose there exists a $\lambda_{0}<\lambda\leq\lambda_{0}+\epsilon$ such that
\begin{equation}\label{3-26}
  W^{\lambda}(x)<0 \quad\quad\, \text{somewhere in} \,\, \Sigma_{\lambda}\setminus\overline{\Sigma_{\lambda_{0}-\frac{\delta}{2}}}.
\end{equation}
Let
\begin{equation}\label{3-27}
  \overline{\zeta}(x):=\cos\frac{\left(x-x^{0}-(\lambda_{0}-\frac{\delta}{2})\nu^{0}\right)\cdot\nu^{0}}{\delta} \quad\quad \text{and} \quad\quad \widetilde{W^{\lambda}}(x):=\frac{W^{\lambda}(x)}{\overline{\zeta}(x)}
\end{equation}
for $x\in\Sigma_{\lambda}\setminus\overline{\Sigma_{\lambda_{0}-\frac{\delta}{2}}}$. Then there exists a $x_{0}\in\Sigma_{\lambda}\setminus\overline{\Sigma_{\lambda_{0}-\frac{\delta}{2}}}$ such that
\begin{equation}\label{3-28}
  \widetilde{W^{\lambda}}(x_{0})=\min_{\overline{\Sigma_{\lambda}\setminus\overline{\Sigma_{\lambda_{0}-\frac{\delta}{2}}}}}\widetilde{W^{\lambda}}(x)<0,
\end{equation}
by using similar arguments as proving \eqref{3-19}, one can also arrive at
\begin{equation}\label{3-29}
  W^{\lambda}(x_{0})\geq p\delta^{2m}\|u\|^{p-1}_{L^{\infty}(\overline{\Omega})}W^{\lambda}(x_{0}),
\end{equation}
which contradicts with the choice of $\delta$. Therefore, we have proved that
\begin{equation}\label{3-30}
  W^{\lambda}(x)\geq0 \quad\quad\, \text{in} \,\, \Sigma_{\lambda}
\end{equation}
for any $\lambda\in(\lambda_{0},\lambda_{0}+\epsilon]$, that is, the plane $T_{\lambda}$ can be moved forward a little bit from $T_{\lambda_{0}}$.

Therefore, there exists a $\delta_{0}>0$ depending only on $x^{0}$ and $\Omega$ such that, $u(x)$ is monotone increasing along the internal normal direction in the region
\begin{equation}\label{3-31}
  \overline{\Sigma_{\delta_{0}}}:=\left\{x\in\overline{\Omega}\,|\,0\leq(x-x^{0})\cdot\nu^{0}\leq\delta_{0}\right\}.
\end{equation}
Since $\partial\Omega$ is $C^{2m-2}$, there exists a small $0<r_{0}<\frac{\delta_{0}}{8}$ depending on $x^{0}$ and $\Omega$ such that, for any $x\in B_{r_{0}}(x^{0})\cap\partial\Omega$, $u(x)$ is monotone increasing along the internal normal direction at $x$ in the region
\begin{equation}\label{3-32}
  \overline{\Sigma_{x}}:=\left\{z\in\overline{\Omega}\,\Big|\,0\leq(z-x)\cdot\nu_{x}\leq\frac{3}{4}\delta_{0}\right\}.
\end{equation}
where $\nu_{x}$ denotes the unit internal normal vector at the point $x$ ($\nu_{x^{0}}:=\nu^{0}$). Since $\Omega$ is strictly convex, there also exists a $\theta>0$ depending on $x^{0}$ and $\Omega$ such that
\begin{equation}\label{3-33}
  I:=\left\{\nu\in\mathbb{R}^n\,|\,|\nu|=1, \, \nu\cdot\nu^{0}\geq\cos\theta\right\}\subset\left\{\nu_{x}\,|\,x\in B_{r_{0}}(x^{0})\cap\partial\Omega\right\},
\end{equation}
and hence, we have, for any $x\in B_{r_{0}}(x^{0})\cap\partial\Omega$ and $\nu\in I$,
\begin{equation}\label{3-34}
  u(x+s\nu) \quad \text{is monotone increasing with respect to} \,\,\, s\in\left[0,\frac{\delta_{0}}{2}\right].
\end{equation}
Let
\begin{equation}\label{3-35}
  D:=\{x+r_{0}\nu^{0}\,|\,x\in B_{r_{0}}(x^{0})\cap\partial\Omega\},
\end{equation}
one can easily verify that
\begin{equation}\label{3-36}
  \max_{\overline{B_{r_{0}}(x^{0})\cap\Omega}}u(x)\leq\max_{\overline{D}}u(x).
\end{equation}
For any $x\in\overline{D}$, let
\begin{equation}\label{3-37}
  \overline{V_{x}}:=\left\{x+\nu\,\Big|\,\nu\cdot\nu^{0}\geq|\nu|\cos\theta, \, |\nu|\leq\frac{\delta_{0}}{4}\right\}
\end{equation}
be a piece of cone with vertex at $x$, then it is easy to see that
\begin{equation}\label{3-38}
  u(x)=\min_{z\in\overline{V_{x}}}u(z).
\end{equation}

Now we need the following Lemma to control the integral of $u$ on $\overline{V_{x}}$.
\begin{lem}\label{lemma2}
Let $\lambda_{1}$ be the first eigenvalue for $(-\Delta)^{m}$ in $\Omega$ with Navier boundary condition, and $0<\phi\in C^{2m}(\Omega)\cap C^{2m-2}(\overline{\Omega})$ be the corresponding eigenfunction (without loss of generality, we may assume $\|\phi\|_{L^{\infty}(\overline{\Omega})}=1$), i.e.,
\begin{equation*}\\\begin{cases}
(-\Delta)^{ m}\phi(x)=\lambda_{1}\phi(x) \,\,\,\,\,\,\,\,\,\, \text{in} \,\,\, \Omega, \\
\phi(x)=-\Delta \phi(x)=\cdots=(-\Delta)^{ m-1}\phi(x)=0 \,\,\,\,\,\,\,\, \text{on} \,\,\, \partial\Omega.
\end{cases}\end{equation*}
Then, we have
\begin{equation*}
  \int_{\Omega}u^{p}(x)\phi(x)dx\leq C(\lambda_{1},p,|\Omega|).
\end{equation*}
\end{lem}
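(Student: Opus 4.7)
The plan is to test the equation \eqref{tNavier} against the first eigenfunction $\phi$ and exploit the self-adjointness of $(-\Delta)^{m}$ under Navier boundary conditions. Concretely, I multiply $(-\Delta)^{m}u=u^{p}+t$ by $\phi$, integrate over $\Omega$, and iteratively transfer the $m$ Laplacians from $u$ onto $\phi$ via Green's identity. At each integration by parts the boundary contribution splits into two pieces, one containing $(-\Delta)^{i}u|_{\partial\Omega}$ and the other $(-\Delta)^{j}\phi|_{\partial\Omega}$ with $0\le i,j\le m-1$; the Navier conditions on \emph{both} $u$ and $\phi$ annihilate each such piece. This yields the clean identity
\begin{equation*}
  \int_{\Omega} u^{p}\phi\,dx + t\int_{\Omega}\phi\,dx \;=\; \int_{\Omega} u\cdot(-\Delta)^{m}\phi\,dx \;=\; \lambda_{1}\int_{\Omega} u\phi\,dx.
\end{equation*}

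Since $t\ge 0$ and $\phi>0$, discarding the nonnegative term $t\int_{\Omega}\phi\,dx$ gives $\int_{\Omega} u^{p}\phi\,dx\le\lambda_{1}\int_{\Omega} u\phi\,dx$. The key algebraic step is then to absorb the right-hand side back into the left via H\"older's inequality with conjugate exponents $p$ and $p/(p-1)$, by splitting $\phi=\phi^{1/p}\cdot\phi^{(p-1)/p}$:
\begin{equation*}
  \lambda_{1}\int_{\Omega}u\phi\,dx \;\le\; \lambda_{1}\Bigl(\int_{\Omega}u^{p}\phi\,dx\Bigr)^{1/p}\Bigl(\int_{\Omega}\phi\,dx\Bigr)^{(p-1)/p}.
\end{equation*}
Writing $A:=\int_{\Omega}u^{p}\phi\,dx$ and using $\|\phi\|_{L^{\infty}(\overline{\Omega})}=1$ to bound $\int_{\Omega}\phi\,dx\le|\Omega|$, this becomes $A\le\lambda_{1}A^{1/p}|\Omega|^{(p-1)/p}$. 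Since $p>1$, the exponent $1-1/p$ is positive, and rearrangement yields
\begin{equation*}
  \int_{\Omega}u^{p}\phi\,dx \;\le\; \lambda_{1}^{p/(p-1)}\,|\Omega|,
\end{equation*}
which is the desired bound with $C=C(\lambda_{1},p,|\Omega|)=\lambda_{1}^{p/(p-1)}|\Omega|$.

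The only technical point I expect is justifying the $m$-fold integration by parts when $u$ is merely in $C^{2m}(\Omega)\cap C^{2m-2}(\overline{\Omega})$, because at the innermost step the boundary integrand pairs $\partial_{\nu}[(-\Delta)^{m-1}u]$ (only interior-regular a priori) against $\phi$. I plan to handle this by a standard exhaustion argument: integrate first on smooth subdomains $\Omega_{\varepsilon}\Subset\Omega$ where all traces are well-defined and the iterated Green identities apply directly, then pass $\varepsilon\to 0^{+}$. In each boundary term at least one factor is $(-\Delta)^{i}u$ or $(-\Delta)^{i}\phi$ with $0\le i\le m-1$, continuous up to $\partial\Omega$ and vanishing there by the Navier conditions, while the interior integrals converge by dominated convergence since $u$ and $(-\Delta)^{m}\phi=\lambda_{1}\phi$ are bounded on $\overline{\Omega}$. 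Beyond this routine approximation I foresee no genuine obstacle.
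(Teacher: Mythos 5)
Your argument is correct and is essentially the paper's own proof: multiply \eqref{tNavier} by $\phi$, use self-adjointness of $(-\Delta)^{m}$ under the Navier conditions to get $\int_{\Omega}u^{p}\phi\,dx\leq\lambda_{1}\int_{\Omega}u\phi\,dx$, then apply H\"older with exponents $p,\,p'$ and $\|\phi\|_{L^{\infty}}=1$ to conclude $\int_{\Omega}u^{p}\phi\,dx\leq\lambda_{1}^{p'}|\Omega|$. The only difference is your added exhaustion argument justifying the $m$-fold integration by parts under the regularity $u\in C^{2m}(\Omega)\cap C^{2m-2}(\overline{\Omega})$, a technical point the paper passes over silently.
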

\begin{proof}
Multiply both side of \eqref{tNavier} by the eigenfunction $\phi(x)$ and integrate by parts, one gets
\begin{eqnarray}\label{3-39}
  \int_{\Omega}u^{p}(x)\phi(x)dx&\leq&\int_{\Omega}(u^{p}(x)+t)\phi(x)dx=\int_{\Omega}(-\Delta)^{ m}u(x)\cdot\phi(x)dx \\
 \nonumber &=&\int_{\Omega}u(x)\cdot(-\Delta)^{ m}\phi(x)dx=\lambda_{1}\int_{\Omega}u(x)\phi(x)dx.
\end{eqnarray}
By H\"{o}lder's inequality, we have
\begin{equation}\label{3-40}
  \int_{\Omega}u^p(x)\phi(x)dx\leq\lambda_{1}\left(\int_{\Omega}u^{p}(x)\phi(x)dx\right)^{\frac{1}{p}}\left(\int_{\Omega}\phi(x)dx\right)^{\frac{1}{p'}},
\end{equation}
and hence
\begin{equation}\label{3-41}
  \int_{\Omega}u^{p}(x)\phi(x)dx\leq\lambda^{p'}_{1}\int_{\Omega}\phi(x)dx\leq\lambda^{p'}_{1}|\Omega|.
\end{equation}
This completes the proof of Lemma \ref{lemma2}.
\end{proof}

By \eqref{3-38} and Lemma \ref{lemma2}, we see that, for any $x\in\overline{D}$,
\begin{eqnarray}\label{3-42}
  C(\lambda_{1},p,\Omega) &\geq& \int_{\Omega}u^{p}(x)\phi(x)dx\geq\int_{\overline{V_{x}}}u^{p}(z)\phi(z)dz \\
 \nonumber &\geq& u^p(x)|\overline{V_{x}}|\cdot\min_{\overline{\Omega^{r_{0}}}}\phi=:u^{p}(x)\cdot C(n,m,x^{0},\Omega),
\end{eqnarray}
where $\overline{\Omega^{r_{0}}}:=\{x\in\Omega\,|\,dist(x,\partial\Omega)\geq r_{0}\}$, and hence
\begin{equation}\label{3-43}
  u(x)\leq C(n,m,p,x^{0},\lambda_{1},\Omega), \quad\quad \forall x\in\overline{D}.
\end{equation}
Therefore, we arrive at
\begin{equation}\label{3-44}
  \max_{\overline{B_{r_{0}}(x^{0})\cap\Omega}}u(x)\leq\max_{\overline{D}}u(x)\leq C(n,m,p,x^{0},\lambda_{1},\Omega).
\end{equation}

Since $x^{0}\in\partial\Omega$ is arbitrary and $\partial\Omega$ is compact, we can cover $\partial\Omega$ by finite balls $\{B_{r_{k}}(x^{k})\}_{k=0}^{K}$ with centers $\{x^{k}\}_{k=0}^{K}\subset\partial\Omega$ ($K$ depends only on $\Omega$). Therefore, there exists a $\bar{\delta}>0$ depending only on $\Omega$ such that
\begin{equation}\label{3-45}
  \|u\|_{L^{\infty}(\overline{\Omega}_{\bar{\delta}})}\leq\max_{0\leq k\leq K}\max_{\overline{B_{r_{k}}(x^{k})\cap\Omega}}u(x)\leq\max_{0\leq k\leq K}C(n,m,p,x^{k},\lambda_{1},\Omega)=:C(n,m,p,\lambda_{1},\Omega),
\end{equation}
where the boundary layer $\overline{\Omega}_{\bar{\delta}}:=\{x\in\overline{\Omega}\,|\,dist(x,\partial\Omega)\leq\bar{\delta}\}$. This completes the proof of boundary layer estimates under assumption i).

\emph{Case ii)} $1<p\leq\frac{n+2}{n-2}$. Under this assumption, we do not require the convexity of $\Omega$ anymore. Since $\partial\Omega$ is $C^{2m-2}$, there exists a $R_{0}>0$ depending only on $\Omega$ such that, for any $x^{0}\in\partial\Omega$, there exists a $\overline{x^{0}}$ satisfying $\overline{B_{R_{0}}(\overline{x^{0}})}\cap\overline{\Omega}=\{x^{0}\}$. For any $x^{0}\in\partial\Omega$, we define the Kelvin transform centered at $\overline{x^{0}}$ by
\begin{equation}\label{3-46}
  x\mapsto x^{\ast}:=\frac{x-\overline{x^{0}}}{|x-\overline{x^{0}}|^{2}}+\overline{x^{0}}, \quad\quad \Omega\rightarrow\Omega^{\ast}\subset B_{\frac{1}{R_{0}}}(\overline{x^{0}}),
\end{equation}
and hence there exists a small $0<\varepsilon_{0}<\frac{1}{100R_{0}}$ depending on $x^{0}$ and $\Omega$ such that $B_{\varepsilon_{0}}\big((\overline{x^{0}})^{\ast}\big)\cap\partial\Omega^{\ast}$ is strictly convex.

Now we define
\begin{equation}\label{3-47}
  \overline{u}(x^{\ast}):=\frac{1}{|x^{\ast}-\overline{x^{0}}|^{n-2}}u\left(\frac{x^{\ast}-\overline{x^{0}}}{|x^{\ast}-\overline{x^{0}}|^{2}}+\overline{x^{0}}\right),
\end{equation}
\begin{equation}\label{3-48}
  \overline{u_{i}}(x^{\ast}):=\frac{1}{|x^{\ast}-\overline{x^{0}}|^{n-2}}u_{i}\left(\frac{x^{\ast}-\overline{x^{0}}}{|x^{\ast}-\overline{x^{0}}|^{2}}+\overline{x^{0}}\right)
\end{equation}
for $i=1,\cdots, m-1$. Then, we have
\begin{equation}\label{3-48'}
  \overline{u}(x^{\ast})>0, \quad\quad \overline{u_{i}}(x^{\ast})>0 \quad\quad \text{in} \,\,\, \Omega^{\ast},
\end{equation}
and from \eqref{tNavier}, we infer that $\overline{u}(x^{\ast})$ and $\overline{u_{i}}(x^{\ast})$ satisfy
\begin{equation}\label{3-49}
\left\{{\begin{array}{l} {-\Delta \overline{u_{ m-1}}(x^{\ast})=\frac{1}{|x^{\ast}-\overline{x^{0}}|^{\tau}}\overline{u}^{p}(x^{\ast})+\frac{t}{|x^{\ast}-\overline{x^{0}}|^{n+2}}, \,\,\,\,\,\, x^{\ast}\in\Omega^{\ast},}\\  {} \\ {-\Delta \overline{u_{ m-2}}(x^{\ast})=\frac{1}{|x^{\ast}-\overline{x^{0}}|^{4}}\overline{u_{ m-1}}(x^{\ast}), \,\,\,\,\,\, x^{\ast}\in\Omega^{\ast},} \\ \cdots\cdots \\ {-\Delta \overline{u}(x^{\ast})=\frac{1}{|x^{\ast}-\overline{x^{0}}|^{4}}\overline{u_{1}}(x^{\ast}), \,\,\,\,\,\, x^{\ast}\in\Omega^{\ast},} \\ {} \\
{\overline{u}(x^{\ast})=\overline{u_{1}}(x^{\ast})=\cdots=\overline{u_{ m-1}}(x^{\ast})=0, \,\,\,\,\,\, x^{\ast}\in\partial\Omega^{\ast},} \\ \end{array}}\right.
\end{equation}
where $\tau:=n+2-p(n-2)\geq0$. Let $\nu^{0}$ be the unit internal normal vector of $\partial\Omega^{\ast}$ at $(x^{0})^{\ast}$, we will show that $\overline{u}(x^{\ast})$ is monotone increasing along the internal normal direction in the region
\begin{equation}\label{3-50}
  \overline{\Sigma_{\delta_{\ast}}}=\left\{x^{\ast}\in\overline{\Omega^{\ast}}\,|\,0\leq(x^{\ast}-(x^{0})^{\ast})\cdot\nu^{0}\leq\delta_{\ast}\right\},
\end{equation}
where $\delta_{\ast}>0$ depends only on $x^{0}$ and $\Omega$.

For this purpose, we define the moving plane by
\begin{equation}\label{3-51}
  T^{\ast}_{\lambda}:=\{x^{\ast}\in\mathbb{R}^n\,|\,(x^{\ast}-(x^{0})^{\ast})\cdot\nu^{0}=\lambda\},
\end{equation}
and denote
\begin{equation}\label{3-52}
  \Sigma^{\ast}_{\lambda}:=\{x^{\ast}\in\Omega^{\ast}\,|\,0<(x^{\ast}-(x^{0})^{\ast})\cdot\nu^{0}<\lambda\}
\end{equation}
for $\lambda>0$, and let $x^{\ast}_{\lambda}$ be the reflection of the point $x^{\ast}$ about the plane $T^{\ast}_{\lambda}$.

Define
\begin{equation}\label{3-53}
  U^{\lambda}(x^{\ast}):=\overline{u}(x^{\ast}_{\lambda})-\overline{u}(x^{\ast}) \quad\quad\, \text{and} \quad\quad\, U^{\lambda}_{i}(x^{\ast}):=\overline{u_{i}}(x^{\ast}_{\lambda})-\overline{u_{i}}(x^{\ast})
\end{equation}
for $1\leq i\leq m-1$. Then we can deduce from \eqref{3-49} that, for any $\lambda$ satisfying the reflection of $\Sigma^{\ast}_{\lambda}$ is contained in $\Omega^{\ast}$,
\begin{equation}\label{3-54}
\left\{{\begin{array}{l} {-\Delta U^{\lambda}_{ m-1}(x^{\ast})=\frac{\overline{u}^{p}(x^{\ast}_{\lambda})}{|x^{\ast}_{\lambda}-\overline{x^{0}}|^{\tau}}
-\frac{\overline{u}^{p}(x^{\ast})}{|x^{\ast}-\overline{x^{0}}|^{\tau}}
+\frac{t}{|x^{\ast}_{\lambda}-\overline{x^{0}}|^{n+2}}-\frac{t}{|x^{\ast}-\overline{x^{0}}|^{n+2}}, \,\,\,\,\,\, x^{\ast}\in\Sigma^{\ast}_{\lambda},}\\  {} \\ {-\Delta U^{\lambda}_{ m-2}(x^{\ast})=\frac{\overline{u_{ m-1}}(x^{\ast}_{\lambda})}{|x^{\ast}_{\lambda}-\overline{x^{0}}|^{4}}
-\frac{\overline{u_{ m-1}}(x^{\ast})}{|x^{\ast}-\overline{x^{0}}|^{4}}, \,\,\,\,\,\, x^{\ast}\in\Sigma^{\ast}_{\lambda},} \\ \cdots\cdots \\ {-\Delta U^{\lambda}(x^{\ast})=\frac{\overline{u_{1}}(x^{\ast}_{\lambda})}{|x^{\ast}_{\lambda}-\overline{x^{0}}|^{4}}
-\frac{\overline{u_{1}}(x^{\ast})}{|x^{\ast}-\overline{x^{0}}|^{4}}, \,\,\,\,\,\, x^{\ast}\in\Sigma^{\ast}_{\lambda},} \\ {} \\
{U^{\lambda}(x^{\ast})\geq0, \, U^{\lambda}_{1}(x^{\ast})\geq0, \cdots, U^{\lambda}_{ m-1}(x^{\ast})\geq0, \,\,\,\,\,\, x^{\ast}\in\partial\Sigma^{\ast}_{\lambda}.} \\ \end{array}}\right.
\end{equation}
Notice that for any $x^{\ast}\in\Sigma^{\ast}_{\lambda}$ with $\lambda<\frac{1}{R_{0}}$, one has
\begin{equation}\label{3-55}
  0<|x^{\ast}_{\lambda}-\overline{x^{0}}|<|x^{\ast}-\overline{x^{0}}|<\frac{1}{R_{0}},
\end{equation}
and hence, by direct calculations, it follows from \eqref{3-54} and $t\geq0$ that
\begin{equation}\label{3-56}
\left\{{\begin{array}{l} {-\Delta U^{\lambda}_{ m-1}(x^{\ast})\geq\frac{p\xi^{p-1}_{\lambda}(x^{\ast})}{|x^{\ast}-\overline{x^{0}}|^{\tau}}U^{\lambda}(x^{\ast})\geq pR^{\tau}_{0}\xi^{p-1}_{\lambda}(x^{\ast})U^{\lambda}(x^{\ast}), \,\,\,\,\,\, x^{\ast}\in\Sigma^{\ast}_{\lambda},}\\  {} \\ {-\Delta U^{\lambda}_{ m-2}(x^{\ast})\geq R^{4}_{0}\,U^{\lambda}_{ m-1}(x^{\ast}), \,\,\,\,\,\, x^{\ast}\in\Sigma^{\ast}_{\lambda},} \\ \cdots\cdots \\ {-\Delta U^{\lambda}(x^{\ast})\geq R^{4}_{0}\,U^{\lambda}_{1}(x^{\ast}), \,\,\,\,\,\, x^{\ast}\in\Sigma^{\ast}_{\lambda},} \\ {} \\
{U^{\lambda}(x^{\ast})\geq0, \, U^{\lambda}_{1}(x^{\ast})\geq0, \cdots, U^{\lambda}_{ m-1}(x^{\ast})\geq0, \,\,\,\,\,\, x^{\ast}\in\partial\Sigma^{\ast}_{\lambda}.} \\ \end{array}}\right.
\end{equation}
where $\xi_{\lambda}(x^{\ast})$ is valued between $\overline{u}(x^{\ast}_{\lambda})$ and $\overline{u}(x^{\ast})$ by mean value theorem, and thus
\begin{equation}\label{3-57}
  \|\xi_{\lambda}\|_{L^{\infty}(\overline{\Sigma^{\ast}_{\lambda}})}\leq\left(diam\,\Omega+R_{0}\right)^{n-2}\|u\|_{L^{\infty}(\overline{\Omega})}.
\end{equation}
Now, we will prove that there exists some $\delta>0$ sufficiently small (depending on $m$, $p$, $\|u\|_{L^{\infty}(\overline{\Omega})}$ and $\Omega$), such that
\begin{equation}\label{3-58}
  U^{\lambda}(x^{\ast})\geq 0 \quad\quad\, \text{in} \,\, \Sigma^{\ast}_{\lambda}
\end{equation}
for all $0<\lambda\leq\delta$. This provides a starting point to move the plane $T^{\ast}_{\lambda}$.

In fact, suppose on the contrary that there exists a $0<\lambda\leq\delta$ such that
\begin{equation}\label{3-59}
  U^{\lambda}(x^{\ast})<0 \quad\quad\, \text{somewhere in} \,\, \Sigma^{\ast}_{\lambda}.
\end{equation}
Let
\begin{equation}\label{3-60}
  \psi(x^{\ast}):=\cos\frac{(x^{\ast}-(x^{0})^{\ast})\cdot\nu^{0}}{\delta},
\end{equation}
then $\psi(x^{\ast})\in[\cos1,1]$ for any $x^{\ast}\in\Sigma^{\ast}_{\lambda}$ and $-\frac{\Delta\psi}{\psi}=\frac{1}{\delta^2}$. Define
\begin{equation}\label{3-61}
  \overline{U^{\lambda}}(x^{\ast}):=\frac{U^{\lambda}(x^{\ast})}{\psi(x^{\ast})} \quad\quad \text{and} \quad\quad \overline{U^{\lambda}_{i}}(x^{\ast}):=\frac{U^{\lambda}_{i}(x^{\ast})}{\psi(x^{\ast})}
\end{equation}
for $i=1,\cdots, m-1$ and $x^{\ast}\in\Sigma^{\ast}_{\lambda}$. Then there exists a $x^{\ast}_{0}\in\Sigma^{\ast}_{\lambda}$ such that
\begin{equation}\label{3-62}
  \overline{U^{\lambda}}(x^{\ast}_{0})=\min_{\overline{\Sigma^{\ast}_{\lambda}}}\overline{U^{\lambda}}(x^{\ast})<0.
\end{equation}
Since
\begin{equation}\label{3-63}
  -\Delta U^{\lambda}(x^{\ast}_{0})=-\Delta\overline{U^{\lambda}}(x^{\ast}_{0})\psi(x^{\ast}_{0})-2\nabla\overline{U^{\lambda}}(x^{\ast}_{0})\cdot\nabla\psi(x^{\ast}_{0})
  -\overline{U^{\lambda}}(x^{\ast}_{0})\Delta\psi(x^{\ast}_{0}),
\end{equation}
one immediately has
\begin{equation}\label{3-64}
  R^{4}_{0}\,U^{\lambda}_{1}(x^{\ast}_{0})\leq-\Delta U^{\lambda}(x^{\ast}_{0})\leq\frac{1}{\delta^2}U^{\lambda}(x^{\ast}_{0})<0.
\end{equation}
Thus there exists a $x^{\ast}_{1}\in\Sigma^{\ast}_{\lambda}$ such that
\begin{equation}\label{3-65}
  \overline{U^{\lambda}_{1}}(x^{\ast}_{1})=\min_{\overline{\Sigma^{\ast}_{\lambda}}}\overline{U^{\lambda}_{1}}(x^{\ast})<0.
\end{equation}
Similarly, it follows that
\begin{equation}\label{3-66}
  R^{4}_{0}\,U^{\lambda}_{2}(x^{\ast}_{1})\leq-\Delta U^{\lambda}_{1}(x^{\ast}_{1})\leq\frac{1}{\delta^2}U^{\lambda}_{1}(x^{\ast}_{1})<0.
\end{equation}
Continuing this way, we get $\{x^{\ast}_{i}\}_{i=1}^{ m-1}\subset\Sigma^{\ast}_{\lambda}$ such that
\begin{equation}\label{3-67}
  \overline{U^{\lambda}_{i}}(x^{\ast}_{i})=\min_{\overline{\Sigma^{\ast}_{\lambda}}}\overline{U^{\lambda}_{i}}(x^{\ast})<0,
\end{equation}
\begin{equation}\label{3-68}
  R^{4}_{0}\,U^{\lambda}_{i+1}(x^{\ast}_{i})\leq-\Delta U^{\lambda}_{i}(x^{\ast}_{i})\leq\frac{1}{\delta^2}U^{\lambda}_{i}(x^{\ast}_{i})<0
\end{equation}
for $i=1,2,\cdots, m-2$, and
\begin{equation}\label{3-69}
  \overline{U^{\lambda}_{ m-1}}(x^{\ast}_{ m-1})=\min_{\overline{\Sigma^{\ast}_{\lambda}}}\overline{U^{\lambda}_{ m-1}}(x^{\ast})<0,
\end{equation}
\begin{equation}\label{3-70}
  pR^{\tau}_{0}\xi^{p-1}_{\lambda}(x^{\ast}_{ m-1})U^{\lambda}(x^{\ast}_{ m-1})\leq-\Delta U^{\lambda}_{ m-1}(x^{\ast}_{ m-1})\leq\frac{1}{\delta^2}U^{\lambda}_{ m-1}(x^{\ast}_{ m-1})<0.
\end{equation}
Therefore, we have
\begin{eqnarray}\label{3-71}
  U^{\lambda}(x^{\ast}_{0}) &\geq& (\delta R^{2}_{0})^{2}U^{\lambda}_{1}(x^{\ast}_{0})\geq (\delta R^{2}_{0})^{2}U^{\lambda}_{1}(x^{\ast}_{1})\frac{\psi(x^{\ast}_{0})}{\psi(x^{\ast}_{1})} \\
 \nonumber &\geq& (\delta R^{2}_{0})^{4}U^{\lambda}_{2}(x^{\ast}_{1})\frac{\psi(x^{\ast}_{0})}{\psi(x^{\ast}_{1})}
 \geq(\delta R^{2}_{0})^{4}U^{\lambda}_{2}(x^{\ast}_{2})\frac{\psi(x^{\ast}_{0})}{\psi(x^{\ast}_{2})} \\
 \nonumber &\geq& (\delta R^{2}_{0})^{6}U^{\lambda}_{3}(x^{\ast}_{2})\frac{\psi(x^{\ast}_{0})}{\psi(x^{\ast}_{2})}
 \geq(\delta R^{2}_{0})^{6}U^{\lambda}_{3}(x^{\ast}_{3})\frac{\psi(x^{\ast}_{0})}{\psi(x^{\ast}_{3})} \\
 \nonumber  &\geq& \cdots\cdots\geq(\delta R^{2}_{0})^{2m-2}U^{\lambda}_{ m-1}(x^{\ast}_{ m-1})\frac{\psi(x^{\ast}_{0})}{\psi(x^{\ast}_{ m-1})} \\
 \nonumber  &\geq& p\delta^{2m}R^{4m-(p-1)(n-2)}_{0}\xi^{p-1}_{\lambda}(x^{\ast}_{ m-1})U^{\lambda}(x^{\ast}_{ m-1})
 \frac{\psi(x^{\ast}_{0})}{\psi(x^{\ast}_{ m-1})} \\
 \nonumber  &\geq& p\delta^{2m}R^{4m-(p-1)(n-2)}_{0}\left(diam\,\Omega+R_{0}\right)^{(p-1)(n-2)}\|u\|^{p-1}_{L^{\infty}(\overline{\Omega})}U^{\lambda}(x^{\ast}_{0}),
\end{eqnarray}
that means,
\begin{equation}\label{3-72}
  1\leq p\delta^{2m}\left(diam\,\Omega+R_{0}\right)^{4m}\|u\|^{p-1}_{L^{\infty}(\overline{\Omega})},
\end{equation}
which is absurd if we choose $\delta>0$ small enough such that
\begin{equation}\label{3-73}
  0<\delta<\left(diam\,\Omega+R_{0}\right)^{-2}\left(p\|u\|^{p-1}_{L^{\infty}(\overline{\Omega})}\right)^{-\frac{1}{2m}}.
\end{equation}
So far, we have proved that the plane $T^{\ast}_{\lambda}$ can be moved on up to $\lambda=\delta$.

Next, we will move the plane $T^{\ast}_{\lambda}$ further along the internal normal direction at $(x^{0})^{\ast}$ as long as the property
\begin{equation}\label{3-74}
  U^{\lambda}(x^{\ast})\geq0 \quad\quad\, \text{in} \,\, \Sigma^{\ast}_{\lambda}
\end{equation}
holds. Completely similar to the proof of \emph{Case i)}, one can actually show that the method of moving planes can be carried on (with the property \eqref{3-74}) as long as the reflection of $\overline{\Sigma^{\ast}_{\lambda}}$ is still contained in $\Omega^{\ast}$. We omit the details here.

Therefore, there exists a $\delta_{\ast}>0$ depending only on $x^{0}$ and $\Omega$ such that, $\overline{u}(x^{\ast})$ is monotone increasing along the internal normal direction in the region
\begin{equation}\label{3-75}
  \overline{\Sigma_{\delta_{\ast}}}:=\left\{x^{\ast}\in\overline{\Omega^{\ast}}\,|\,0\leq\left(x^{\ast}-(x^{0})^{\ast}\right)\cdot\nu^{0}\leq\delta_{\ast}\right\}.
\end{equation}
Since $\partial\Omega^{\ast}$ is $C^{2m-2}$, there exists a small $0<\varepsilon_{1}<\min\{\frac{\delta_{\ast}}{8},\varepsilon_{0}\}$ depending on $x^{0}$ and $\Omega$ such that, for any $x^{\ast}\in B_{\varepsilon_{1}}\big((x^{0})^{\ast}\big)\cap\partial\Omega^{\ast}$, $\overline{u}(x^{\ast})$ is monotone increasing along the internal normal direction at $x^{\ast}$ in the region
\begin{equation}\label{3-76}
  \overline{\Sigma_{x^{\ast}}}:=\left\{z^{\ast}\in\overline{\Omega^{\ast}}\,\Big|\,0\leq(z^{\ast}-x^{\ast})\cdot\nu_{x^{\ast}}\leq\frac{3}{4}\delta_{\ast}\right\}.
\end{equation}
where $\nu_{x^{\ast}}$ denotes the unit internal normal vector at the point $x^{\ast}$ ($\nu_{(x^{0})^{\ast}}:=\nu^{0}$). Since $B_{\varepsilon_{1}}\big((x^{0})^{\ast}\big)\cap\partial\Omega^{\ast}$ is strictly convex, there exists a $\theta>0$ depending on $x^{0}$ and $\Omega$ such that
\begin{equation}\label{3-77}
  S:=\left\{\nu^{\ast}\in\mathbb{R}^n\,|\,|\nu^{\ast}|=1, \, \nu^{\ast}\cdot\nu^{0}\geq\cos\theta\right\}\subset\left\{\nu_{x^{\ast}}\,|\,x^{\ast}\in B_{\varepsilon_{1}}\big((x^{0})^{\ast}\big)\cap\partial\Omega^{\ast}\right\},
\end{equation}
and hence, it follows that, for any $x^{\ast}\in B_{\varepsilon_{1}}\big((x^{0})^{\ast}\big)\cap\partial\Omega^{\ast}$ and $\nu^{\ast}\in S$,
\begin{equation}\label{3-78}
  \overline{u}(x^{\ast}+s\nu^{\ast}) \quad \text{is monotone increasing with respect to} \,\,\, s\in\left[0,\frac{\delta_{\ast}}{2}\right].
\end{equation}
Now, let
\begin{equation}\label{3-79}
  D^{\ast}:=\left\{x^{\ast}+\varepsilon_{1}\nu^{0}\,|\,x^{\ast}\in B_{\varepsilon_{1}}\big((x^{0})^{\ast}\big)\cap\partial\Omega^{\ast}\right\},
\end{equation}
one immediately has
\begin{equation}\label{3-80}
  \max_{\overline{B_{\varepsilon_{1}}((x^{0})^{\ast})\cap\Omega^{\ast}}}\overline{u}(x^{\ast})\leq\max_{\overline{D^{\ast}}}\overline{u}(x^{\ast}).
\end{equation}
For any $x^{\ast}\in\overline{D^{\ast}}$, let
\begin{equation}\label{3-81}
  \overline{V_{x^{\ast}}}:=\left\{x^{\ast}+\nu^{\ast}\,\Big|\,\nu^{\ast}\cdot\nu^{0}\geq|\nu^{\ast}|\cos\theta, \, |\nu^{\ast}|\leq\frac{\delta_{\ast}}{4}\right\}
\end{equation}
be a piece of cone with vertex at $x^{\ast}$, then it is obvious that
\begin{equation}\label{3-82}
  \overline{u}(x^{\ast})=\min_{z^{\ast}\in\overline{V_{x^{\ast}}}}\overline{u}(z^{\ast}).
\end{equation}

Therefore, by \eqref{3-82} and Lemma \ref{lemma2}, we get, for any $x^{\ast}\in\overline{D^{\ast}}$,
\begin{eqnarray}\label{3-83}
 && C(\lambda_{1},p,\Omega)\geq\int_{\Omega}u^{p}(x)\phi(x)dx \\
 \nonumber &=&\int_{\Omega^{\ast}}\frac{\overline{u}^{p}(x^{\ast})}{|x^{\ast}-\overline{x^{0}}|^{2n-p(n-2)}}
  \phi\left(\frac{x^{\ast}-\overline{x^{0}}}{|x^{\ast}-\overline{x^{0}}|^2}+\overline{x^{0}}\right)dx^{\ast} \\
 \nonumber &\geq& \int_{\overline{V_{x^{\ast}}}}\frac{\overline{u}^{p}(z^{\ast})}{|z^{\ast}-\overline{x^{0}}|^{2n-p(n-2)}}
  \phi\left(\frac{z^{\ast}-\overline{x^{0}}}{|z^{\ast}-\overline{x^{0}}|^2}+\overline{x^{0}}\right)dz^{\ast}\\
 \nonumber &\geq& \overline{u}^p(x^{\ast})R^{2n-p(n-2)}_{0}|\overline{V_{x^{\ast}}}|\cdot\min_{\overline{\Omega^{r_{1}}}}\phi
 =:\overline{u}^{p}(x^{\ast})\cdot C(n,m,p,x^{0},\Omega),
\end{eqnarray}
where $\overline{\Omega^{r_{1}}}:=\{x\in\Omega\,|\,dist(x,\partial\Omega)\geq r_{1}\}$ with $r_{1}=\varepsilon_{1}R^{2}_{0}$, and hence
\begin{equation}\label{3-84}
  \overline{u}(x^{\ast})\leq C(n,m,p,x^{0},\lambda_{1},\Omega), \quad\quad \forall x\in\overline{D^{\ast}}.
\end{equation}
As a consequence, we derive that
\begin{equation}\label{3-85}
  \max_{\overline{B_{\varepsilon_{1}}((x^{0})^{\ast})\cap\Omega^{\ast}}}\overline{u}(x^{\ast})\leq\max_{\overline{D^{\ast}}}\overline{u}(x^{\ast})\leq C(n,m,p,x^{0},\lambda_{1},\Omega).
\end{equation}
There exists a small $r_{0}>0$ depending only on $x^{0}$ and $\Omega$ such that, for each $x\in\overline{B_{r_{0}}(x^{0})\cap\Omega}$, one has $x^{\ast}\in\overline{B_{\varepsilon_{1}}\big((x^{0})^{\ast}\big)\cap\Omega^{\ast}}$. Therefore, \eqref{3-85} yields
\begin{eqnarray}\label{3-86}
  \max_{\overline{B_{r_{0}}(x^{0})\cap\Omega}}u(x)&=&\max_{x\in\overline{B_{r_{0}}(x^{0})\cap\Omega}}|x^{\ast}-\overline{x^{0}}|^{n-2}\overline{u}(x^{\ast}) \\
  \nonumber &\leq& \frac{1}{R^{n-2}_{0}}\max_{\overline{B_{\varepsilon_{1}}((x^{0})^{\ast})\cap\Omega^{\ast}}}\overline{u}(x^{\ast})\leq C(n,m,p,x^{0},\lambda_{1},\Omega).
\end{eqnarray}

Since $x^{0}\in\partial\Omega$ is arbitrary and $\partial\Omega$ is compact, we can cover $\partial\Omega$ by finite balls $\{B_{r_{k}}(x^{k})\}_{k=0}^{K}$ with centers $\{x^{k}\}_{k=0}^{K}\subset\partial\Omega$ ($K$ depends only on $\Omega$). Therefore, there exists a $\bar{\delta}>0$ depending only on $\Omega$ such that
\begin{equation}\label{3-87}
  \|u\|_{L^{\infty}(\overline{\Omega}_{\bar{\delta}})}\leq\max_{0\leq k\leq K}\max_{\overline{B_{r_{k}}(x^{k})\cap\Omega}}u(x)\leq\max_{0\leq k\leq K}C(n,m,p,x^{k},\lambda_{1},\Omega)=:C(n,m,p,\lambda_{1},\Omega),
\end{equation}
where the boundary layer $\overline{\Omega}_{\bar{\delta}}:=\{x\in\overline{\Omega}\,|\,dist(x,\partial\Omega)\leq\bar{\delta}\}$. This completes the proof of boundary layer estimates under assumption ii).

This concludes our proof of Theorem \ref{Boundary}.
\end{proof}

\subsection{Blowing-up analysis and interior estimates}

In this subsection, we will obtain the interior estimates (and hence, global a priori estimates) via the blowing-up analysis arguments (for related literatures on blowing-up methods, please refer to \cite{BC,BM,CDQ,CL3,CL4,CY0,Li,SZ1}).

Suppose on the contrary that Theorem \ref{Thm1} does not hold. By the boundary layer estimates (Theorem \ref{Boundary}), there exists a sequence of positive solutions $\{u_{k}\}\subset C^{2m}(\Omega)\cap C^{2m-2}(\overline{\Omega})$ to the higher order Navier problem \eqref{tNavier} and a sequence of interior points $\{x^{k}\}\subset\Omega\setminus\overline{\Omega}_{\bar{\delta}}$ such that
\begin{equation}\label{32-1}
  m_{k}:=u_{k}(x^{k})=\|u_{k}\|_{L^{\infty}(\overline{\Omega})}\rightarrow+\infty \quad \text{as} \,\, k\rightarrow\infty.
\end{equation}
For $x\in\Omega_{k}:=\{x\in\mathbb{R}^{n}\,|\,\lambda_{k}x+x^{k}\in\Omega\}$, we define
\begin{equation}\label{32-2}
  v_{k}(x):=\frac{1}{m_{k}}u_{k}(\lambda_{k}x+x^{k}) \quad \text{with} \,\, \lambda_{k}:=m_{k}^{\frac{1-p}{2m}}\rightarrow0 \quad \text{as} \,\, k\rightarrow\infty.
\end{equation}
Then $v_{k}(x)$ satisfies $\|v_{k}\|_{L^{\infty}(\overline{\Omega_{k}})}=v_{k}(0)=1$ and
\begin{eqnarray}\label{32-3}
  (-\Delta)^{m}v_{k}(x)&=&\frac{1}{m_{k}}\lambda^{2m}_{k}(-\Delta)^{m}u_{k}(\lambda_{k}x+x^{k}) \\
 \nonumber &=& \frac{1}{m_{k}}\lambda^{2m}_{k}\left(u^{p}_{k}(\lambda_{k}x+x^{k})+t\right)=v^{p}_{k}(x)+\frac{t}{m^{p}_{k}}
\end{eqnarray}
for any $x\in\Omega_{k}$. Since $dist(x^{k},\partial\Omega)>\bar{\delta}$, one has
\begin{equation}\label{32-4}
  \Omega_{k}\supset\left\{x\in\mathbb{R}^{n}\,|\,|\lambda_{k}x|\leq\bar{\delta}\right\}=\overline{B_{\frac{\bar{\delta}}{\lambda_{k}}}(0)},
\end{equation}
and hence
\begin{equation}\label{32-5}
  \Omega_{k}\rightarrow\mathbb{R}^{n} \quad \text{as} \,\, k\rightarrow\infty.
\end{equation}

For arbitrary $x^{0}\in\mathbb{R}^{n}$, there exists a $N_{1}>0$, such that $\overline{B_{1}(x^{0})}\subset\Omega_{k}$ for any $k\geq N_{1}$. By \eqref{32-3} and $\|v_{k}\|_{L^{\infty}(\overline{\Omega_{k}})}\leq1$, we can infer from regularity theory and Sobolev embedding that
\begin{equation}\label{32-6}
  \|v_{k}\|_{C^{2m-1,\gamma}(\overline{B_{1}(0)})}\leq C(1+t),
\end{equation}
and further that
\begin{equation}\label{32-7}
   \|v_{k}\|_{C^{2(2m-1),\gamma}(\overline{B_{1}(0)})}\leq C(1+t)
\end{equation}
for $k\geq N_{1}$, where $0\leq\gamma<1$. As a consequence, by Arzel\`{a}-Ascoli Theorem, there exists a subsequence $\{v^{(1)}_{k}\}\subset\{v_{k}\}$ and a function $v\in C^{2m}(\overline{B_{1}(x^{0})})$ such that
\begin{equation}\label{32-8}
  v^{(1)}_{k}\rightrightarrows v \quad \text{and} \quad (-\Delta)^{ m}v^{(1)}_{k}\rightrightarrows(-\Delta)^{ m}v \quad\quad \text{in} \,\, \overline{B_{1}(x^{0})}.
\end{equation}
There also exists a $N_{2}>0$ such that $\overline{B_{2}(x^{0})}\subset\Omega_{k}$ for any $k\geq N_{2}$. By \eqref{32-3} and $\|v_{k}\|_{L^{\infty}(\overline{\Omega_{k}})}\leq1$, we can deduce that
\begin{equation}\label{32-9}
  \|v^{(1)}_{k}\|_{C^{2(2m-1),\gamma}(\overline{B_{2}(0)})}\leq C(1+t)
\end{equation}
for $k\geq N_{2}$, where $0\leq\gamma<1$. Therefore, by Arzel\`{a}-Ascoli Theorem again, there exists a subsequence $\{v^{(2)}_{k}\}\subset\{v^{(1)}_{k}\}$ and $v\in C^{2m}(\overline{B_{2}(x^{0})})$ such that
\begin{equation}\label{32-10}
  v^{(2)}_{k}\rightrightarrows v \quad \text{and} \quad (-\Delta)^{ m}v^{(2)}_{k}\rightrightarrows(-\Delta)^{ m}v \quad\quad \text{in} \,\, \overline{B_{2}(x^{0})}.
\end{equation}
Continuing this way, for any $j\in\mathbb{N}^{+}$, we can extract a subsequence $\{v^{(j)}_{k}\}\subset\{v^{(j-1)}_{k}\}$ and find a function $v\in C^{2m}(\overline{B_{j}(x^{0})})$ such that
\begin{equation}\label{32-11}
  v^{(j)}_{k}\rightrightarrows v \quad \text{and} \quad (-\Delta)^{ m}v^{(j)}_{k}\rightrightarrows(-\Delta)^{ m}v \quad\quad \text{in} \,\, \overline{B_{j}(x^{0})}.
\end{equation}
By extracting the diagonal sequence, we finally obtain that the subsequence $\{v^{(k)}_{k}\}$ satisfies
\begin{equation}\label{32-12}
  v^{(k)}_{k}\rightrightarrows v \quad \text{and} \quad (-\Delta)^{ m}v^{(k)}_{k}\rightrightarrows(-\Delta)^{ m}v \quad\quad \text{in} \,\, \overline{B_{j}(x^{0})}
\end{equation}
for any $j\geq1$. Therefore, we get from \eqref{32-3} that $0\leq v\in C^{2m}(\mathbb{R}^{n})$ satisfies
\begin{equation}\label{32-13}
  (-\Delta)^{ m}v(x)=v^{p}(x) \quad\quad \text{in} \,\, \mathbb{R}^{n}.
\end{equation}
By the Liouville theorem (Theorem \ref{Thm0}), we must have $v\equiv0$ in $\mathbb{R}^{n}$, which is a contradiction with
\begin{equation}\label{32-14}
  v(0)=\lim_{k\rightarrow\infty}v^{(k)}_{k}(0)=1.
\end{equation}

This concludes our proof of Theorem \ref{Thm1}.

\section{Proof of Theorem \ref{Thm2}}

In this section, by applying the a priori estimates (Theorem \ref{Thm-CFL} and Theorem \ref{Thm1}) and the following Leray-Schauder fixed point theorem (see e.g. \cite{CLM,CDQ}), we will prove the existence of positive solutions to the higher order Lane-Emden equations \eqref{Navier} with Navier boundary conditions.
\begin{thm}\label{L-S}
Suppose that $X$ is a real Banach space with a closed positive cone $P$, $U\subset P$ is bounded open and contains $0$. Assume that there exists $\rho>0$ such that $B_{\rho}(0)\cap P\subset U$ and that $T:\,\overline{U}\rightarrow P$ is compact and satisfies
\vskip 5pt
\noindent i) For any $x\in P$ with $|x|=\rho$ and any $\lambda\in[0,1)$, $x\neq\lambda Tx$;
\vskip 3pt
\noindent ii) There exists some $y\in P\setminus\{0\}$ such that $x-Tx\neq ty$ for any $t\geq0$ and $x\in\partial U$.
\vskip 5pt
\noindent Then, $T$ possesses a fixed point in $\overline{U_{\rho}}$, where $U_{\rho}:=U\setminus B_{\rho}(0)$.
\end{thm}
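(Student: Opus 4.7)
The plan is to deduce the result from the fixed-point index theory for compact maps on cones of Banach spaces, whose four basic properties (normalization, additivity over disjoint open decompositions, homotopy invariance, and the ``solution property''—nonzero index forces a fixed point) are classical; see e.g.\ Deimling's \emph{Nonlinear Functional Analysis} or Guo--Lakshmikantham. The strategy is to compute $i(T,B_{\rho}(0)\cap P,P)$ and $i(T,U,P)$ separately using the two hypotheses, and then read off $i(T,U_{\rho},P)$ by additivity.

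First, I would exploit assumption i). If $T$ already has a fixed point on $\partial B_{\rho}(0)\cap P$, that point lies in $\overline{U_{\rho}}$ and the theorem is proved, so one may assume not. Then i) says precisely that the straight-line homotopy $H(\lambda,x):=\lambda Tx$ has no fixed point on $\partial B_{\rho}(0)\cap P$ for every $\lambda\in[0,1]$: for $\lambda\in[0,1)$ this is the hypothesis, and for $\lambda=1$ it is our standing assumption. Homotopy invariance, together with the normalization $i(0,B_{\rho}(0)\cap P,P)=1$ (since $0\in B_{\rho}(0)\cap P\subset U$), yields
\[
i(T,B_{\rho}(0)\cap P,P)=i(0,B_{\rho}(0)\cap P,P)=1.
\]

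Next, I would exploit assumption ii). Because $T$ is compact, $T(\overline{U})$ is bounded; choose $t_{0}>0$ so large that $\|Tx+t_{0}y\|>\sup_{z\in\overline{U}}\|z\|$ for every $x\in\overline{U}$. Then $x\mapsto Tx+t_{0}y$ admits no fixed point in $\overline{U}$, so its index on $U$ is zero. Meanwhile, assumption ii) says exactly that the affine homotopy $G(t,x):=Tx+ty$ is fixed-point free on $\partial U$ for all $t\in[0,t_{0}]$. Homotopy invariance therefore gives
\[
i(T,U,P)=i(T+t_{0}y,U,P)=0.
\]
Applying additivity to the decomposition $U=(B_{\rho}(0)\cap P)\cup U_{\rho}$, which is legitimate once we know $T$ is fixed-point free on the dividing relative sphere $\partial B_{\rho}(0)\cap P$, one obtains
\[
0=i(T,U,P)=i(T,B_{\rho}(0)\cap P,P)+i(T,U_{\rho},P)=1+i(T,U_{\rho},P),
\]
so $i(T,U_{\rho},P)=-1\neq 0$, which forces a fixed point of $T$ in $U_{\rho}\subset\overline{U_{\rho}}$.

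The main ``obstacle'' is really just bookkeeping rather than analysis: one must verify at each stage that the relevant index is actually defined, i.e.\ that $T$ has no fixed points on the corresponding relative boundary, and interpret all boundaries as boundaries inside the cone $P$ rather than in the ambient space $X$. Any degenerate case in which $T$ picks up a fixed point on $\partial B_{\rho}(0)\cap P$ is harmless, since such a point already belongs to $\overline{U_{\rho}}$ and yields the conclusion directly, bypassing the need to invoke the index at all.
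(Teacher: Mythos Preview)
Your proposal is correct and follows the standard route via fixed-point index theory on cones. Note, however, that the paper does not actually supply a proof of this theorem: it is quoted as a known tool with references to \cite{CLM,CDQ}, and the authors simply apply it in Section~4. So there is no ``paper's own proof'' to compare against here; your argument is precisely the kind of proof one finds in the cited sources (Deimling, Guo--Lakshmikantham, etc.), computing the index on the small ball to be $1$ via the contraction homotopy $\lambda T$, the index on the large set $U$ to be $0$ via the push-off homotopy $T+ty$, and reading off a nonzero index on the annular region by additivity.
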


Now we let
\begin{equation}\label{4-1}
  X:=C^{0}(\overline{\Omega}) \quad\quad \text{and} \quad\quad P:=\{u\in X \,|\, u\geq0\}.
\end{equation}
Define
\begin{equation}\label{4-2}
  T(u)(x):=\int_{\Omega}G_{2}(x,y^{ m})\int_{\Omega}G_{2}(y^{ m},y^{ m-1})\int_{\Omega}\cdots\int_{\Omega}G_{2}(y^{2},y^{1})u^{p}(y^{1})
  dy^{1}dy^{2}\cdots dy^{ m},
\end{equation}
where $G_{2}(x,y)$ is the Green's function for $-\Delta$ with Dirichlet boundary condition in $\Omega$. Suppose $u\in C^{0}(\overline{\Omega})$ is a fixed point of $T$, i.e., $u=Tu$, then it is easy to see that $u\in C^{2m}(\Omega)\cap C^{2m-2}(\overline{\Omega})$ and satisfies the Navier problem
\begin{equation}\label{4-3}\\\begin{cases}
(-\Delta)^{ m}u(x)=u^{p}(x) \,\,\,\,\,\,\,\,\,\, \text{in} \,\,\, \Omega, \\
u(x)=-\Delta u(x)=\cdots=(-\Delta)^{ m-1}u(x)=0 \,\,\,\,\,\,\,\, \text{on} \,\,\, \partial\Omega.
\end{cases}\end{equation}

Our goal is to show the existence of a fixed point for $T$ in $P\setminus B_{\rho}(0)$ for some $\rho>0$ (to be determined later) by using Theorem \ref{L-S}. To this end, we need to verify the two conditions i) and ii) in Theorem \ref{L-S} separately.

\emph{i)} First, we show that there exists $\rho>0$ such that for any $u\in\partial B_{\rho}(0)\cap P$ and $0\leq\lambda<1$,
\begin{equation}\label{4-4}
  u-\lambda T(u)\neq0.
\end{equation}
For any $x\in\overline{\Omega}$, it holds that
\begin{eqnarray}\label{4-5}
 \nonumber |T(u)(x)|&=&\left|\int_{\Omega}G_{2}(x,y^{ m})\int_{\Omega}G_{2}(y^{ m},y^{ m-1})\cdots\int_{\Omega}G_{2}(y^{2},y^{1})u^{p}(y^{1})
  dy^{1}\cdots dy^{ m}\right| \\
  &\leq& \int_{\Omega}G_{2}(x,y^{ m})\int_{\Omega}G_{2}(y^{ m},y^{ m-1})\cdots\int_{\Omega}G_{2}(y^{2},y^{1})dy^{1}\cdots dy^{ m}\cdot\|u\|^{p}_{C^{0}(\overline{\Omega})} \\
 \nonumber &\leq& \rho^{p-1}\left\|\int_{\Omega}G_{2}(x,y)dy\right\|^{ m}_{C^{0}(\overline{\Omega})}\cdot\|u\|_{C^{0}(\overline{\Omega})}.
\end{eqnarray}
Let $g(x):=\int_{\Omega}G_{2}(x,y)dy$, then it solves
\begin{equation}\label{4-6}\\\begin{cases}
-\Delta_{x}g(x)=1 \,\,\,\,\,\,\,\,\,\, \text{in} \,\,\, \Omega, \\
g(x)=0 \,\,\,\,\,\,\,\, \text{on} \,\,\, \partial\Omega.
\end{cases}\end{equation}
For a fixed point $x^{0}\in\Omega$, we define the function
\begin{equation}\label{4-7}
  \beta(x):=\frac{(diam\,\Omega)^{2}}{2n}\left(1-\frac{|x-x^{0}|^{2}}{(diam\,\Omega)^{2}}\right)_{+},
\end{equation}
then it satisfies
\begin{equation}\label{4-8}\\\begin{cases}
-\Delta_{x}\beta(x)=1 \,\,\,\,\,\,\,\,\,\, \text{in} \,\,\, \Omega, \\
\beta(x)>0 \,\,\,\,\,\,\,\, \text{on} \,\,\, \partial\Omega.
\end{cases}\end{equation}
By maximum principle, we get
\begin{equation}\label{4-9}
  0\leq g(x)<\beta(x)\leq\frac{(diam\,\Omega)^{2}}{2n}, \quad\quad \forall \,\, x\in\overline{\Omega}.
\end{equation}
Therefore, we infer from \eqref{4-5} and \eqref{4-9} that
\begin{equation}\label{4-10}
  \|T(u)\|_{C^{0}(\overline{\Omega})}<\rho^{p-1}\frac{(diam\,\Omega)^{2m}}{(2n)^{m}}\|u\|_{C^{0}(\overline{\Omega})}=\|u\|_{C^{0}(\overline{\Omega})}
\end{equation}
if we take
\begin{equation}\label{4-11}
  \rho=\left(\frac{\sqrt{2n}}{diam\,\Omega}\right)^{\frac{2m}{p-1}}>0.
\end{equation}
This implies that $u\neq\lambda T(u)$ for any $u\in\partial B_{\rho}(0)\cap P$ and $0\leq\lambda<1$.

\emph{ii)} Now, let $\varphi\in C^{2m}(\Omega)\cap C^{2m-2}(\overline{\Omega})$ be the unique positive solution of
\begin{equation}\label{4-12}\\\begin{cases}
(-\Delta)^{ m}\varphi(x)=1, \,\,\,\,\,\,\,\,\,\, \,\,\, x\in\Omega, \\
\varphi(x)=-\Delta\varphi(x)=\cdots=(-\Delta)^{m-1}\varphi(x)=0, \,\,\,\,\,\,\,\,\,\,\,\, x\in\partial\Omega.
\end{cases}\end{equation}
We will show that
\begin{equation}\label{4-13}
  u-T(u)\neq t\varphi \quad\quad \forall \,\, t\geq0, \quad \forall u\in\partial U,
\end{equation}
where $U:=B_{R}(0)\cap P$ with sufficiently large $R>\rho$ (to be determined later). First, observe that for any $u\in\overline{U}$,
\begin{equation}\label{4-14}
  \left\|(-\Delta)^{ m}T(u)\right\|_{C^{0}(\overline{\Omega})}=\|u\|^{p}_{C^{0}(\overline{\Omega})}\leq R^{p},
\end{equation}
and hence
\begin{equation}\label{4-15}
  \|T(u)\|_{C^{0,\alpha}(\Omega)}\leq CR^{p} \quad\quad \forall \,\, 0<\alpha<1,
\end{equation}
thus $T:\, \overline{U}\rightarrow P$ is compact.

We use contradiction arguments to prove \eqref{4-13}. Suppose on the contrary that, there exists some $u\in\partial U$ and $t\geq0$ such that
\begin{equation}\label{4-16}
  u-T(u)=t\varphi,
\end{equation}
then one has $\|u\|_{C^{0}(\overline{\Omega})}=R>\rho>0$, $u\in C^{2m}(\Omega)\cap C^{2m-2}(\overline{\Omega})$ and satisfies the Navier problem
\begin{equation}\label{4-17}\\\begin{cases}
(-\Delta)^{ m}u(x)=u^{p}(x)+t, \,\,\,\,\,\,\,\,\, u(x)>0, \,\,\,\,\,\,\,\,\,\, x\in\Omega, \\
u(x)=-\Delta u(x)=\cdots=(-\Delta)^{ m-1}u(x)=0, \,\,\,\,\,\,\,\,\,\,\,\, x\in\partial\Omega.
\end{cases}\end{equation}
Choose a constant $C_{1}>\lambda_{1}$. Since $u(x)>0$ in $\Omega$ and $p>1$, it is easy to see that, there exists another constant $C_{2}>0$ (e.g., take $C_{2}=C_{1}^{\frac{p}{p-1}}$), such that
\begin{equation}\label{4-18}
  u^{p}(x)\geq C_{1}u(x)-C_{2}.
\end{equation}
If $t\geq C_{2}$, then we have
\begin{equation}\label{4-19}
  (-\Delta)^{ m}u(x)=u^{p}(x)+t\geq C_{1}u(x)-C_{2}+t\geq C_{1}u(x) \quad\quad \text{in} \,\, \Omega.
\end{equation}
Multiplying both side of \eqref{4-19} by the eigenfunction $\phi(x)$, and integrating by parts yield
\begin{eqnarray}\label{4-20}
  C_{1}\int_{\Omega}u(x)\phi(x)dx&\leq&\int_{\Omega}(-\Delta)^{ m}u(x)\cdot\phi(x)dx=\int_{\Omega}u(x)\cdot(-\Delta)^{ m}\phi(x)dx \\
 \nonumber &=&\lambda_{1}\int_{\Omega}u(x)\phi(x)dx,
\end{eqnarray}
and hence
\begin{equation}\label{4-21}
  0<(C_{1}-\lambda_{1})\int_{\Omega}u(x)\phi(x)dx\leq0,
\end{equation}
which is absurd. Thus, we must have $0\leq t<C_{2}$. Next, we carry on our proof by discussing two different assumptions.

If $\frac{n}{n-2m}<p<\frac{n+2m}{n-2m}$, by the a priori estimates (Theorem \ref{Thm-CFL}), we derive that
\begin{equation}\label{4-CFL}
  \|u\|_{L^{\infty}(\overline{\Omega})}\leq C(n,m,p,\Omega)=:C'_{0}.
\end{equation}

If $\Omega$ is strictly convex, $1<p<\frac{n+2m}{n-2m}$, or if $1<p\leq\frac{n+2}{n-2}$, by the a priori estimates (Theorem \ref{Thm1}), we know that
\begin{equation}\label{4-22}
  \|u\|_{L^{\infty}(\overline{\Omega})}\leq C(n,m,p,t,\lambda_{1},\Omega).
\end{equation}
We will show that the above a priori estimates \eqref{4-22} are uniform with respect to $0\leq t<C_{2}$, i.e., for $0\leq t<C_{2}$,
\begin{equation}\label{4-23}
  \|u\|_{L^{\infty}(\overline{\Omega})}\leq C(n,m,p,C_{2},\lambda_{1},\Omega)=:C''_{0}.
\end{equation}
Indeed, it is clear from Theorem \ref{Boundary} that, the thickness $\bar{\delta}$ of the boundary layer and the boundary layer estimates are uniform with respect to $t$. Therefore, if \eqref{4-23} does not hold, there exist sequences $\{t_{k}\}\subset[0,C_{2})$, $\{x^{k}\}\subset\Omega\setminus\overline{\Omega}_{\bar{\delta}}$ and $\{u_{k}\}$ satisfying
\begin{equation}\label{4-24}\\\begin{cases}
(-\Delta)^{ m}u_{k}(x)=u_{k}^{p}(x)+t_{k}, \,\,\,\,\,\,\,\,\,\, \,\,\, x\in\Omega, \\
u_{k}(x)=-\Delta u_{k}(x)=\cdots=(-\Delta)^{ m-1}u_{k}(x)=0, \,\,\,\,\,\,\,\,\,\,\,\, x\in\partial\Omega,
\end{cases}\end{equation}
but $m_{k}:=u_{k}(x^{k})=\|u_{k}\|_{L^{\infty}(\overline{\Omega})}\rightarrow+\infty$ as $k\rightarrow\infty$. For $x\in\Omega_{k}:=\{x\in\mathbb{R}^{n}\,|\,\lambda_{k}x+x^{k}\in\Omega\}$, we define $v_{k}(x):=\frac{1}{m_{k}}u_{k}(\lambda_{k}x+x^{k})$ with $\lambda_{k}:=m^{\frac{1-p}{2m}}_{k}\rightarrow0$ as $k\rightarrow\infty$. Then $v_{k}(x)$ satisfies $\|v_{k}\|_{L^{\infty}(\overline{\Omega_{k}})}=v_{k}(0)=1$ and
\begin{equation}\label{4-25}
(-\Delta)^{ m}v_{k}(x)=v^{p}_{k}(x)+\frac{t_{k}}{m^{p}_{k}}
\end{equation}
for any $x\in\Omega_{k}$. Since $0\leq t<C_{2}$ and $m_{k}\rightarrow+\infty$, by completely similar blowing-up methods as in the proof of Theorem \ref{Thm1} in subsection 3.2, we can also derive a subsequence $\{v^{(k)}_{k}\}\subset\{v_{k}\}$ and a function $v\in C^{2m}(\mathbb{R}^{n})$ such that
\begin{equation}\label{4-26}
  v^{(k)}_{k}\rightrightarrows v \quad\quad \text{and} \quad\quad (-\Delta)^{m}v^{(k)}_{k}\rightrightarrows(-\Delta)^{m}v \quad\quad \text{in} \,\, \overline{B_{j}(x^{0})}
\end{equation}
for arbitrary $j\geq1$, and hence $0\leq v\in C^{2m}(\mathbb{R}^{n})$ solves
\begin{equation}\label{4-27}
  (-\Delta)^{m}v(x)=v^{p}(x) \quad\quad \text{in} \,\, \mathbb{R}^{n}.
\end{equation}
By Theorem \ref{Thm0}, one immediately has $v\equiv0$, which contradicts with $v(0)=1$. Therefore, the uniform estimates \eqref{4-23} must hold.

Now we let $C_{0}:=\max\{C'_{0},C''_{0}\}>0$ and $R:=C_{0}+\rho$ and $U:=B_{C_{0}+\rho}(0)\cap P$, then \eqref{4-CFL} and \eqref{4-23} implies
\begin{equation}\label{4-28}
  \|u\|_{L^{\infty}(\overline{\Omega})}\leq C_{0}<C_{0}+\rho,
\end{equation}
which contradicts with $u\in\partial U$. This implies that
\begin{equation}\label{4-29}
  u-T(u)\neq t\varphi
\end{equation}
for any $t\geq0$ and $u\in\partial U$ with $U=B_{C_{0}+\rho}(0)\cap P$.

From Theorem \ref{L-S}, we deduce that there exists a $u\in\overline{\big(B_{C_{0}+\rho}(0)\cap P\big)\setminus B_{\rho}(0)}$ satisfies
\begin{equation}\label{4-30}
  u=T(u),
\end{equation}
and hence $\rho\leq\|u\|_{L^{\infty}(\overline{\Omega})}\leq C_{0}+\rho$ solves the higher order Navier problem
\begin{equation}\label{4-31}\\\begin{cases}
(-\Delta)^{ m}u(x)=u^{p}(x), \,\,\,\,\,\,\, u(x)>0, \,\,\,\,\,\,\,\,\, x\in\Omega, \\
u(x)=-\Delta u(x)=\cdots=(-\Delta)^{ m-1}u(x)=0, \,\,\,\,\,\,\,\,\,\,\,\, x\in\partial\Omega.
\end{cases}\end{equation}
By regularity theory, we can see that $u\in C^{2m}(\Omega)\cap C^{2m-2}(\overline{\Omega})$.

This concludes our proof of Theorem \ref{Thm2}.

\end{document}